\documentclass[11pt,reqno]{amsart}

\usepackage{mathrsfs}
\usepackage{amsfonts}
\usepackage{graphicx,epsfig,amsmath,amsfonts}
\usepackage{color}
\usepackage{latexsym,amssymb}
\thispagestyle{empty}
\usepackage{lineno,version}
\usepackage{multirow}
\usepackage{bm}
\usepackage{float}
\usepackage{subfigure}

\setlength{\textwidth}{40pc} \setlength{\textheight}{53pc}
\setlength{\topmargin}{-0pc}
\setlength{\oddsidemargin}{-.2cm}
\setlength{\evensidemargin}{-.2cm}

\usepackage [latin1] {inputenc}

\usepackage{bm}

\makeatletter
\renewcommand{\@thesubfigure}{\hskip\subfiglabelskip}
\makeatother
\usepackage{subeqnarray}
\usepackage{cases}
\usepackage{color}
\usepackage{caption}
\captionsetup{font=scriptsize}
\setlength{\parindent}{2em}

\newtheorem{theorem}{\hskip\parindent\bf Theorem}[section]

\newtheorem{remark}{\hskip\parindent\bf Remark}[section]
\newtheorem{example}{\hskip\parindent\bf Example}[section]
\newtheorem{algorithm}{\hskip\parindent\bf Algorithm}[section]

\def\bc{\begin{center}}
\def\ec{\end{center}}
\numberwithin{equation}{section}{}

\newcommand{\ba}{\begin{array}}\newcommand{\ea}{\end{array}}
\newcommand{\be}{\begin{eqnarray}}\newcommand{\ee}{\end{eqnarray}}
\newcommand{\bex}{\begin{eqnarray*}}
\newcommand{\eex}{\end{eqnarray*}}

\begin{document}
\graphicspath{{figure/},}

\title[Numerical schemes for phase field dendritic crystal growth models]
{An efficient numerical method for the anisotropic phase field dendritic crystal growth model$^*$}
\author{Yayu Guo$^{1}$
\quad
Mejdi  Aza\"\i ez$^{1,2}$
\quad
Chuanju Xu$^{1,3}$}

\thanks{\hskip -12pt
${}^*$This research is partially supported by
NSFC grant 11971408.\\
${}^{1}$School of Mathematical Sciences and
Fujian Provincial Key Laboratory of Mathematical Modeling and High Performance
Scientific Computing, Xiamen
University, 361005 Xiamen, China. \\
${}^{2}$Universit\'e de Bordeaux, I2M, (UMR CNRS 5295), 33400 Talence, France.\\
${}^{3}$Corresponding author.\\
Emails:
yayuguo@stu.xmu.edu.cn (Yayu Guo);
azaiez@bordeaux-inp.fr (Mejdi Aza\"\i ez); cjxu@xmu.edu.cn (Chuanju Xu)}

\keywords {dendritic crystal growth model; anisotropy; phase field; numerical method.}
\subjclass[2010]{74N05, 65M12, 65M70, 65Z05.}

\date {\today}
\maketitle

\begin{abstract}
In this paper,
we propose and analyze an efficient numerical method
for the anisotropic phase field dendritic crystal
growth model, which is challenging because we are facing
the nonlinear coupling and anisotropic coefficient in the model.
The proposed method is
a two-step scheme. In the first step, an intermediate solution is computed by
using BDF schemes of order up to three for both the phase-field and heat equations.
In the second step the intermediate solution is stabilized by multiplying
an auxiliary variable. The key of the second step is to stabilize the overall scheme
while maintaining the convergence order of the stabilized solution.
In order to overcome the difficulty caused by the
gradient-dependent anisotropic coefficient and the nonlinear terms,
some stabilization terms are added to the BDF schemes in the first step.
The second step makes use of a generalized auxiliary variable approach
with relaxation.
The Fourier spectral method is applied for the spatial discretization.
Our analysis shows that the proposed scheme is unconditionally stable and
has accuracy in time up to third order.
We also provide a sophisticated implementation
showing that the computational complexity of our schemes is equivalent to
solving two linear equations and some algebraic equations.
To the best of our knowledge, this is the cheapest unconditionally stable schemes
reported in the literature.
Some numerical examples are given to verify the efficiency of the proposed
method.
\end{abstract}

\section{Introduction}

Dendritic crystal growth is a process that occurs when a supercooled liquid solidifies into a crystalline solid, and the solidification front develops into a branched, treelike structure known as a dendrite. The dendritic crystal growth model is a theoretical framework that describes the growth and morphology of dendritic crystals. The model is based on the assumption that the dendritic crystal grows through the attachment of individual atoms or molecules to the solid-liquid interface. The growth rate and branching of the dendrite depend on the local conditions of temperature, concentration, and fluid flow. The dendritic crystal growth model takes into account the effects of diffusion, convection, and thermal gradients on the growth process.
One of most classical dendritic crystal growth models is
the Mullins-Sekerka model, which was proposed in 1963 by John Mullins and Robert Sekerka. The model describes the growth of a single dendrite in a homogeneous, isotropic medium under conditions of zero gravity and no fluid flow.
On the other hand, the phase field model is a more complex and versatile model that can describe the dynamics of multiple dendrites in a heterogeneous and anisotropic medium under the influence of fluid flow and gravity. The phase field model uses a mathematical field to represent the solid and liquid phases and their interface, and the evolution of this field is governed by a set of partial differential equations.
The model takes into account various physical phenomena that affect crystal growth, such as diffusion, advection, surface energy, and elastic deformation.
Compared to classical models, the phase field model is more computationally intensive and requires more complex numerical techniques to solve the governing equations. However, the phase field model is more versatile and can be used to simulate a wider range of dendritic growth phenomena
\cite{chalmers1970principles,glicksman1989fundamentals,fix1982phase,caginalp1986analysis,kim1999universal}.

Numerical simulations play a crucial role in the study of dendritic crystal growth, which allow us to understand the complex dynamics in a quantitative and detailed way. Dendritic crystal growth is a highly nonlinear process that depends on many variables, including temperature gradients, solute concentration, and surface energy. Numerical simulations provide a way to systematically explore the effect of these variables on the growth behavior of dendritic crystals; see,
e.g., \cite{kobayashi1993modeling,warren1995prediction,karma1999phase,ramirez2004phase,ferreira2011numerical,shah2014numerical,zhao2019linear}.

Energy dissipation is an important physical process that governs the dynamics of dendritic crystal growth models.
The difficulties in numerically solving dendritic crystal growth models stem from two
facts. First, it is desirable to construct numerical schemes that preserve the energy dissipation of the models. Secondly, we want numerical schemes to be linear,
decoupled, and highly stable so that they can be efficiently implemented.
The nature of the dendritic crystal growth models, i.e.,
strong nonlinearity, anisotropic coefficient, and coupling of different variables
make the task challenging. There exist several numerical schemes developed for dendritic crystal growth models. Earlier work includes the operator splitting method proposed in \cite{li2011fast,li2012phase}
without analysis of discrete energy stability.
\cite{zhao2017numerical} considered the dendrite growth model in the isothermal case,
and proposed a linear, decoupled numerical scheme based on the invariant energy quadratization approach (IEQ). Later on, a scheme combining IEQ and stabilization was proposed in \cite{yang2019efficient}, but the phase field and the temperature is coupled in the scheme.
The first fully decoupled method proposed in \cite{zhang2019novel}, but only first order accurate.
There also exist numerical schemes based on the so-called scalar auxiliary variable (SAV) approach \cite{shen2018scalar}. \cite{yang2020efficient} proposed the decoupled scheme by using the SAV technique, but the energy stability of the second-order scheme was not established.
Recently, Yang \cite{yang2021fully,yang2021novel} proposed a SAV-based second order scheme
which is decoupled and unconditionally energy stable.
\cite{li2022new} proposed an improvement to reduce the number (from five to four) of the equations to be solved at each time step.
A parallel algorithm using direction splitting method and the stabilization technique was proposed in \cite{wang2022accurate}, which can be fourth order accurate.
But the law of energy dissipation was not given.
Notice that
a second-order unconditionally stable method was constructed in \cite{li2023second} for the anisotropic dendritic crystal growth model with an orientation-field.

The goal of the our paper is to propose and analyze a more efficient scheme for a phase field dendritic crystal growth model given in \cite{karma1998quantitative}.
The model is composed of the Allen-Cahn type equation
with gradient dependence anisotropy coefficient and heat transfer equation.
The proposed scheme makes use of a generalized auxiliary variable approach with relaxation
\cite{zhang2022generalized}, and the $k$-order backward difference method for the
temporal discretization. For the spatial discretization, we consider a Fourier spectral method.

The main contributions of the paper are as follows:

\begin{itemize}
  \item We extend the auxiliary variable approach, proposed in \cite{zhang2022generalized} for single dissipative equations,
  to our dendritic crystal growth model, which is the coupling of a nonlinear
  phase field equation and a heat equation. As it is emphasized above, the strong nonlinearity, anisotropic coefficient, and strong interaction between the phase field and the temperature
make the extension non-trivial;

  \item Compared with the existing schemes for the dendritic crystal growth model,
the method proposed in the current paper has higher order convergence, fully decoupled, and
unconditionally energy stable.
In particular, compared with the most recent method in \cite{li2022new} which requires solving
four linear elliptic equations, our method only needs to solve two linear equations at each time step;

  \item The use of linear stabilizers in the schemes
helps in balancing the anisotropic coefficient and the nonlinear term.
As we will see in the numerical experiments, this allows using relatively larger time step sizes
to accurately capture the nonlinear dendritic crystal growth process.
\end{itemize}

The rest of the paper is organized as follows. In Section 2, we briefly describe the phase field dendrite crystal model and its energy dissipation properties.
In Section 3, we construct our schemes and prove the energy stability of the proposed schemes.
In Section 4, a series of numerical examples are provided to verify the accuracy and demonstrate
the effectiveness of the numerical method.
Some concluding remarks are given in the final section.

\section{The phase field dendrite crystal growth model}

We consider the phase field dendrite crystal model proposed in \cite{karma1998quantitative}. We add a constant $1$ to the original energy functional.
Let $\Omega$ be a smooth, open, bounded and connected domain in $\mathbb{R}^d$ with $d=2,3$.  Consider the energy functional
\begin{equation}\label{energy}
E(\phi, u)=\int_{\Omega}\left(\frac{1}{2}|m(\nabla \phi) \nabla \phi|^2+\frac{\lambda}{2 \varepsilon K} u^2+ F(\phi)\right) d \mathbf{x}+1,
\end{equation}
where $\phi$ stands for the phase field, with $\phi=1$ for the solid and $\phi=-1$ for the fluid, $u$ denotes the temperature field, $m(\nabla \phi)$ is the anisotropic coefficient, $\varepsilon$, $\lambda$, and $K$ are positive constants. $F(\phi)=\frac{1}{4 \varepsilon^2}(\phi^2-1)^2$, $f(\phi)=F'(\phi)$.
The
anisotropic nature is described by the nonlinear coefficient $m(\nabla \phi)$, which is a function
that depends on the direction of the outer normal vector ${\bf{n}}$, i.e.,
${\bf{n}}=\frac{\nabla \phi}{|\nabla \phi|}$.
In 2D, the anisotropic coefficient is determined by
\bex
m(\nabla\phi)=1+\sigma\cos(\beta\theta),
\eex
where $\beta$ is the number of folds of anisotropy,
$\sigma\in [0,1)$ is the anisotropy strength, $\theta=\arctan(\frac{\phi_y}{\phi_x})$.
In the case $\sigma=0$, $m(\nabla \phi)$ is a constant, and
the surface energy becomes isotropic. Taking the example of 4-folds anisotropy, i.e.,
$\beta=4$, then we have (see, e.g., \cite{karma1998quantitative,karma1999phase}):
\begin{equation}\label{1.2a}
m(\nabla\phi)=(1-3\sigma)\Big(1+\frac{4\sigma}{1-3\sigma}\frac{\phi_x^4+\phi_y^4}{|\nabla\phi|^4}\Big).
\end{equation}
In 3D case, $m(\nabla\phi)$ is defined \cite{li2011fast} as
\begin{equation}\label{1.2b}
m(\nabla\phi)=(1-3\sigma)\Big(1+\frac{4\sigma}{1-3\sigma}\frac{\phi_x^4+\phi_y^4+\phi_z^4}{|\nabla\phi|^4}\Big).
\end{equation}

\begin{remark} \label{rm0}
 It is notable that an artificial constant 1 term is added in the definition of the energy functional \eqref{energy}, compared with the original definition in the literature.
This constant 1 term does not change the corresponding gradient flow model of course,
but makes the energy bigger than 1, which
allows simplifying the construction and analysis of the schemes as we will see in the following.
\end{remark}

By adopting a relaxation dynamics,
the governing equations of the phase field dendrite crystal growth reads:
\begin{subequations}\label{2.1}
\begin{align}
\tau\phi_t
=& - \frac{\delta E}{\delta\phi} -4\lambda\varepsilon F(\phi)u, \label{2.1a}\\
u_t=&D\Delta u+4\varepsilon^2KF(\phi)\phi_t, \label{2.1b}
\end{align}
\end{subequations}
where $D$ is the diffusion rate of the temperature,
$\frac{\delta E}{\delta\phi}$ is the gradient of the energy functional $E(\phi, u)$
with respect to $\phi$ in $L^2$ space:
\bex
\frac{\delta E}{\delta\phi}=-\nabla \cdot(m^2(\nabla \phi) \nabla \phi+|\nabla \phi|^2 m(\nabla \phi) \boldsymbol{H}(\phi)) + f(\phi),
\eex
where $\boldsymbol{H}(\phi)$ is the variational derivative of $m(\nabla\phi)$.
For $m=4$, according to \eqref{1.2a} and \eqref{1.2b}, $\boldsymbol{H}(\phi)$ reads
\bex
 \boldsymbol{H}(\phi):=\left\{
             \begin{array}{ll}
4\sigma\frac{4}{|\nabla\phi|^6}(\phi_x(\phi_x^2\phi_y^2-\phi_y^4),\phi_y(\phi_x^2\phi_y^2-\phi_x^4)),   & \text{in 2D},\\
4\sigma\frac{4}{|\nabla\phi|^6}(\phi_x(\phi_x^2\phi_y^2+\phi_x^2\phi_z^2-\phi_y^4-\phi_z^4),\phi_y(\phi_y^2\phi_z^2+\phi_x^2\phi_y^2-\phi_x^4-\phi_z^4),
 \\ \phi_z(\phi_x^2\phi_z^2+\phi_y^2\phi_z^2-\phi_x^4-\phi_y^4)),    & \text{in 3D}.
             \end{array}
           \right.
\eex
To avoid the complexity of integrating on the boundary,
it is common to consider the periodic boundary condition or the homogeneous Neumann condition.
Under these conditions,
taking the $L^2$ inner product of the system \eqref{2.1a} and \eqref{2.1b} with $\phi_t$ and $\frac{\lambda}{\varepsilon K}u$ respectively, we obtain the energy dissipation laws as follows:
\begin{equation}\label{1.5}
\frac{dE(\phi,u)}{dt}=-\|\sqrt{\tau}\phi_t\|_0^2-\frac{\lambda D}{\varepsilon K}\|\nabla u\|_0^2\leq0,
\end{equation}
where $\|\cdot\|_0$ denotes the usual $L^2$ norm.
From \eqref{1.5}, we see that the free energy $E$ decays in time.
The goal of the next section is to construct highly efficient schemes for \eqref{2.1}, which satisfy
a discrete counterpart of the energy dissipation law \eqref{1.5}.
The design of the schemes follows an auxiliary variable approach, and make use of linear stabilizers
to balance the explicit treatment of the anisotropic coefficient and the nonlinear terms.
The first step is to introduce a modified energy as follows
\begin{equation}\label{reforenergy}
E_1(\phi, u)=\int_{\Omega}\left(\frac{1}{2}\big(m^2(\nabla \phi)-S_1\big) |\nabla \phi|^2+\frac{\lambda}{2 \varepsilon K} u^2+ F(\phi)-\frac{S_2}{2\varepsilon^2}\phi^2 \right) d \mathbf{x},
\end{equation}
where $S_1, S_2$ are two positive constants. The energy is then split into two parts:
\bex
E(\phi, u)=E_1(\phi, u)+E_2(\phi, u),
\eex
where
\bex
E_2(\phi, u)=\int_{\Omega}\left(\frac{S_1}{2} |\nabla \phi|^2+ \frac{S_2}{2\varepsilon^2}\phi^2\right) d \mathbf{x}+1.
\eex
This energy splitting leads to the reformulation of the phase field equation \eqref{2.1a}:
\bex
\tau\phi_t+\frac{\delta E_2}{\delta\phi}=-\frac{\delta E_1}{\delta\phi}-4\lambda\varepsilon F(\phi)u,
\eex
which
\bex
&&\frac{\delta E_1}{\delta\phi}=-\nabla \cdot\big(m^2(\nabla \phi) \nabla \phi+|\nabla \phi|^2 m(\nabla \phi) \boldsymbol{H}(\phi)\big)+ S_1\Delta\phi+f(\phi)-\frac{S_2}{\varepsilon^2}\phi,\\
&&\frac{\delta E_2}{\delta\phi}= - S_1\Delta\phi+\frac{S_2}{\varepsilon^2}\phi.
\eex
The benefit of this reformulation is the presence of the linear terms $S_1\Delta\phi$ and $\frac{S_2}{\varepsilon^2}\phi$ in the left side,
which can be treated implicitly in the scheme construction.

Then we introduce the auxiliary variable
\be\label{av}
q(t)=E(\phi, u).
\ee
Differentiating $q(t)$ gives
\be\label{qeq}
\frac{d q}{d t}=-\frac{q}{E(\phi, u)} \mathcal{H}(\phi, u),
\ee
where, according to \eqref{1.5} and \eqref{2.1a},
\be\label{H}
\mathcal{H}(\phi, u)
= - \frac{dE(\phi,u)}{dt}
= \frac{1}{\tau}\Big\| \frac{\delta E}{\delta \phi}+4\lambda\varepsilon F(\phi)u\Big\|_0^2+\frac{\lambda D}{\varepsilon K}\|\nabla u\|_0^2.
\ee
Notice that the quantity $\frac{q}{E(\phi, u)}$, which is equal to 1, is technically added in the front
of $\mathcal{H}(\phi, u)$. This is to allow more flexibility in designing the schemes.
To summarize, we arrive at the following reformulation of the phase field dendrite crystal growth model, which is
strictly equivalent to \eqref{2.1}:
\begin{subequations}\label{reform}
\begin{align}
\tau\phi_t- S_1\Delta\phi+\frac{S_2}{\varepsilon^2}\phi
&=-\frac{\delta E_1}{\delta\phi}-4\lambda\varepsilon F(\phi)u, \label{reform_1}\\
u_t
&=D\Delta u+4\varepsilon^2KF(\phi)\phi_t, \label{reform_2}\\
\frac{d q}{d t}
&= -\frac{q}{E(\phi, u)} \mathcal{H}(\phi, u). \label{reform_3}
\end{align}
\end{subequations}

\section{Construction of the schemes}

We are now in a position to construct our schemes based on the reformulation
\eqref{reform}. We start with the time stepping scheme.

\subsection{The time stepping scheme}

Let $M$ be a positive integer and $0=t^0<t^1<\cdots<t^M=T$ be a uniform partition of $[0,T]$, where $t^n=n\delta t, n=0,1,\cdots, M$, and $\delta t=T/M$ is the time step size.
Let $(\cdot)^{n+1}$ denotes the numerical approximation to $(\cdot)$ at $t^{n+1}$.
Essentially, the proposed schemes is a kind of BDF-$k$ approximation
to the time derivative terms in the reformulation. The basic idea is to treat the
linear terms implicitly, nonlinear terms explicitly, and interaction terms explicitly or semi-implicitly.
Precisely, we propose the following scheme: Given $\phi^{n}, u^{n}$, and ${q}^{n}$.

{\bf Step 1}\ Compute the intermediate solution $\bar\phi^{n+1}, \bar u^{n+1}, \bar{q}^{n+1}$
by:
\begin{subequations}\label{2.4}
\begin{align}
& \tau\frac{\alpha_k \bar{\phi}^{n+1}-A_k(\phi^n)}{\delta t}- S_1\Delta\bar{\phi}^{n+1}+\frac{S_2}{\varepsilon^2}\bar{\phi}^{n+1}
=-\frac{\delta E_1}{\delta\phi}(B_k(\phi^n))-4\lambda\varepsilon F(B_k(\phi^n))B_k(u^n), \label{2.4a}\\
& \frac{\alpha_k \bar{u}^{n+1}-A_k(u^n)}{\delta t}-D\Delta\bar{u}^{n+1}=4\varepsilon^2KF(B_k(\phi^n))\frac{\alpha_k \bar{\phi}^{n+1}-A_k(\phi^n)}{\delta t}, \label{2.4b}\\
& \frac{1}{\delta t}(\bar{q}^{n+1}-q^n)=-\frac{\bar{q}^{n+1}}{E(\bar{\phi}^{n+1},\bar{u}^{n+1})} \mathcal{H}(\bar{\phi}^{n+1},\bar{u}^{n+1}), \label{2.4c}
\end{align}
\end{subequations}
where the coefficients $\alpha_k$ and the operators $A_k(\cdot)$, $B_k(\cdot)$ are defined
according to the BDF-$k$ approximation. Specifically, we have for $k=1,2,3$

- BDF-1:
$
\alpha_1=1, \quad A_1(\varphi^n)=\varphi^n, \quad B_1(\varphi^n)=\varphi^n ;
$

- BDF-2:
$
\alpha_2=\frac{3}{2}, \quad A_2(\varphi^n)=2 \varphi^n-\frac{1}{2} \varphi^{n-1}, \quad B_2(\varphi^n)=2 \varphi^n-\varphi^{n-1};
$

- BDF-3:
$
\alpha_3=\frac{11}{6}, \quad A_3(\varphi^n)=3 \varphi^n-\frac{3}{2} \varphi^{n-1}+\frac{1}{3} \varphi^{n-2}, \quad B_3(\varphi^n)=3 \varphi^n-3 \varphi^{n-1}+\varphi^{n-2}.
$

{\bf Step 2}\  Compute first the coefficients
\be\label{2.4d}
& \xi_{k}^{n+1}=\frac{\bar{q}^{n+1}}{E(\bar{\phi}^{n+1},\bar{u}^{n+1})}, \quad  \eta_k^{n+1}=1-(1-\xi_{k}^{n+1})^{k+1},
\ee
then the solution at the step $n+1$, i.e., $\phi^{n+1}, u^{n+1}, q^{n+1}$, by
\be\label{2.4e}
& \phi^{n+1}=\eta_k^{n+1} \bar{\phi}^{n+1}, \quad  u^{n+1}=\eta_k^{n+1} \bar{u}^{n+1},
\ee
and
\be\label{2.5}
q^{n+1}=\zeta^{n+1} \bar{q}^{n+1}+(1-\zeta^{n+1}) E(\phi^{n+1},u^{n+1}),
\ee
where $\zeta^{n+1}\in [0,1]$ is chosen such that
\be\label{2.6}
\frac{q^{n+1}-\bar{q}^{n+1}}{\delta t}\leq
\frac{\bar{q}^{n+1}}{E(\bar{\phi}^{n+1},\bar{u}^{n+1})} \mathcal{H}(\bar{\phi}^{n+1},\bar{u}^{n+1}).
\ee

Before proving the existence of $\zeta^{n+1}$, there are several
key points in the scheme worthy of explanation.

\begin{remark}\label{rm1}
$\ $

1) First formally, it is readily seen from \eqref{2.4a} and \eqref{2.4b} that $\bar{\phi}^{n+1}$ and
$\bar{u}^{n+1}$ are respectively $k$-order approximation to ${\phi}(t^{n+1})$ and $u(t^{n+1})$.
Therefore, directly setting ${\phi}^{n+1}=\bar{\phi}^{n+1}$ and $u^{n+1}=\bar{u}^{n+1}$ would result
in a $k$-order scheme. However this scheme would be unstable.

2) $\bar{q}^{n+1}$ computed by \eqref{2.4c} is a first order approximation to
${q}(t^{n+1})$, which, by \eqref{av}, is the original energy $E$ at $t^{n+1}$.
Thus $\xi_{k}^{n+1}$ is a first order approximation to $1$, i.e., $\xi_{k}^{n+1}=1+O({\delta t})$,
and $\eta_k^{n+1}$ is a $k+1$-order approximation to 1. Therefore, updating
$\phi^{n+1}$ and $u^{n+1}$ by \eqref{2.4e} remains $k$-order accurate.

3) It follows from \eqref{2.4c} that
\begin{equation}\label{2.12}
\bar{q}^{n+1}=\frac{q^n}{1+\delta t \frac{\mathcal{H}(\bar{\phi}^{n+1},\bar{u}^{n+1})}{E(\bar{\phi}^{n+1},\bar{u}^{n+1})}}.
\end{equation}
According to \eqref{energy} and \eqref{H}, both $\mathcal{H}$ and $E$ are non negative.
Therefore, $\bar{q}^{n+1}$ remains positive if ${q^n}$ is positive.

4) Updating $\phi^{n+1}$ and $u^{n+1}$ by \eqref{2.4e} follows the popular idea of
the SAV approach, which is a key step toward unconditionally stable schemes.

5) The updating step of ${q}^{n+1}$, i.e., \eqref{2.5}, is usually called relaxation step.
This relaxation process was initially introduced by Jiang et al. in \cite{Jiang2022improving}
to improve the accuracy of the auxiliary variable method, then extended by Zhang et al.
\cite{zhang2022generalized} for the generalized auxiliary variable method.
The aim of this step is to adjust the auxiliary variable so that it better approximates
the original energy $E$, therefore makes the computed result more reliable.
The constraint \eqref{2.6} imposed by the updating step \eqref{2.5} has the purpose
to keep the scheme stable, as shown in Theorem \ref{thm1}.

\end{remark}

Now we turn to proving the existence of $\zeta^{n+1}\in [0,1]$ in \eqref{2.6}.
It is readily seen that this is equivalent to the problem of finding $\zeta^{n+1}\in [0,1]$ such that
\begin{equation}\label{opt3}
\begin{aligned}
\left(\bar{q}^{n+1}-E(\phi^{n+1},u^{n+1})\right) \zeta^{n+1}\leq
&\bar{q}^{n+1}-E(\phi^{n+1},u^{n+1})
+\delta t \frac{\bar{q}^{n+1}}{E(\bar{\phi}^{n+1},\bar{u}^{n+1})} \mathcal{H}(\bar{\phi}^{n+1},\bar{u}^{n+1}).
\end{aligned}
\end{equation}
In fact, $\zeta^{n+1}$ is not unique.
We will propose an algorithm
to choose $\zeta^{n+1}\in [0,1]$ as close as possible to 0 (so, according to \eqref{2.5},
$q^{n+1}$ as close as possible to $E(\phi^{n+1},u^{n+1})$).

\begin{algorithm}\label{alg1}
(Determination of the parameter $\zeta^{n+1}\in [0,1]$)
\begin{enumerate}


\item If $\bar{q}^{n+1}\geq E(\phi^{n+1},u^{n+1})$, we set $\zeta^{n+1}=0$.

\item If $\bar{q}^{n+1}<E(\phi^{n+1},u^{n+1})$ and $\bar{q}^{n+1}-E(\phi^{n+1},u^{n+1})+\delta t \frac{\bar{q}^{n+1}}{E(\bar{\phi}^{n+1},\bar{u}^{n+1})} \mathcal{H}(\bar{\phi}^{n+1},\bar{u}^{n+1}) \geq 0$, we set $\zeta^{n+1}=0$.
\item If $\bar{q}^{n+1}<E(\phi^{n+1},u^{n+1})$ and $\bar{q}^{n+1}-E(\phi^{n+1},u^{n+1})+\delta t \frac{\bar{q}^{n+1}}{E(\bar{\phi}^{n+1},\bar{u}^{n+1})} \mathcal{H}(\bar{\phi}^{n+1},\bar{u}^{n+1})<0$, we set
\bex
\zeta^{n+1}=1-\frac{\delta t \bar{q}^{n+1} \mathcal{H}(\bar{\phi}^{n+1},\bar{u}^{n+1})}
       {E(\bar{\phi}^{n+1},\bar{u}^{n+1})(E(\phi^{n+1},u^{n+1})-\bar{q}^{n+1})}.
\eex
Then obviously $\zeta^{n+1}\in [0,1]$ (remember $\bar{q}^{n+1}\ge 0$ according to Remark \ref{rm1} 3).
 \end{enumerate}
\end{algorithm}

\begin{theorem}\label{thm1}
The scheme \eqref{2.4}-\eqref{2.5} satisfies the energy dissipation law as follows:
\begin{equation}\label{2.9}
q^{n+1}\leq q^n, \ \ \forall n\ge 0.
\end{equation}
In addition, the quantity
$\|\nabla\phi^{n}\|_0$ and $\|u^n\|_0$ remain bounded for all $n$.
\end{theorem}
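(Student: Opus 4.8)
\emph{Plan.} The argument splits into two parts: the discrete energy law \eqref{2.9}, which is essentially built into the construction, and the a priori bounds on $\|\nabla\phi^n\|_0$, $\|u^n\|_0$, which require exploiting the rescaling in Step~2. For the first part I would use that the existence of $\zeta^{n+1}\in[0,1]$ satisfying \eqref{2.6} is already guaranteed by Algorithm~\ref{alg1}, and then simply add the identity \eqref{2.4c}, rewritten as $\bar{q}^{n+1}-q^n=-\delta t\,\frac{\bar{q}^{n+1}}{E(\bar\phi^{n+1},\bar u^{n+1})}\mathcal H(\bar\phi^{n+1},\bar u^{n+1})$, to the inequality \eqref{2.6}: the right‑hand sides cancel and one is left with $q^{n+1}-q^n\le 0$, whence $q^n\le q^0$ for all $n$. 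Positivity of $q^n$ follows by induction: $q^0=E(\phi^0,u^0)\ge 1$ by Remark~\ref{rm0}, and if $q^n>0$ then \eqref{2.12} gives $0<\bar{q}^{n+1}\le q^n$ (since $\mathcal H,E\ge0$), while in each of the three branches of Algorithm~\ref{alg1} the value $q^{n+1}$ equals either $E(\phi^{n+1},u^{n+1})\ge1$ or $q^n$, so $q^{n+1}>0$. In particular $0<\bar{q}^{n+1}\le q^n\le q^0$ throughout.

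\emph{Reduction of the bounds.} Since $m(\nabla\phi)$ is bounded below by a positive constant $c_m$ and $F\ge0$, the definition \eqref{energy} yields the coercivity estimate $E(\phi,u)\ge \tfrac{c_m^2}{2}\|\nabla\phi\|_0^2+\tfrac{\lambda}{2\varepsilon K}\|u\|_0^2+1$, so it suffices to bound $E(\phi^n,u^n)$ uniformly in $n$; for $n=0$ this is just $q^0$. For $n\ge1$ I would invoke $\phi^{n}=\eta_k^{n}\bar\phi^{n}$, $u^{n}=\eta_k^{n}\bar u^{n}$ from \eqref{2.4e}, set $\xi:=\xi_k^{n}=\bar{q}^{n}/\bar E$ with $\bar E:=E(\bar\phi^{n},\bar u^{n})\ge1$ and $\eta:=\eta_k^{n}=1-(1-\xi)^{k+1}$, and use that $m(\nabla\phi)$ is homogeneous of degree zero in $\nabla\phi$, so that $m(\nabla\phi^{n})=m(\nabla\bar\phi^{n})$ and
\[
E(\phi^{n},u^{n})=\eta^{2}\Big(\tfrac12\|m(\nabla\bar\phi^{n})\nabla\bar\phi^{n}\|_0^2+\tfrac{\lambda}{2\varepsilon K}\|\bar u^{n}\|_0^2\Big)+\int_\Omega F(\eta\bar\phi^{n})\,d\mathbf{x}+1 .
\]
The first bracket is $\le\bar E$; for the last integral I would run a Cauchy--Schwarz argument on $F(\bar\phi^{n})=\tfrac1{4\varepsilon^2}((\bar\phi^{n})^2-1)^2$ to get $\int_\Omega(\bar\phi^{n})^4 d\mathbf{x}\le 8|\Omega|+8\varepsilon^2\int_\Omega F(\bar\phi^{n})\,d\mathbf{x}\le 8|\Omega|+8\varepsilon^2\bar E$, hence $\int_\Omega F(\eta\bar\phi^{n})\,d\mathbf{x}\le\tfrac1{4\varepsilon^2}\big(\eta^4\!\int_\Omega(\bar\phi^{n})^4 d\mathbf{x}+|\Omega|\big)\le 2\eta^4\bar E+\tfrac{2\eta^4|\Omega|}{\varepsilon^2}+\tfrac{|\Omega|}{4\varepsilon^2}$. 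Everything therefore comes down to bounding $\eta^{2}\bar E$, $\eta^{4}\bar E$ and $\eta^{4}$ by constants depending only on $q^0$ and $k$.

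\emph{The key estimate, and the main difficulty.} This last point is where the real work is, and it is genuinely the main obstacle: $\bar E=E(\bar\phi^{n},\bar u^{n})$ need not be bounded on its own — which is to be expected, since the $\bar\phi^{n}$ produced by the stabilized BDF step of Step~1 may be a poor approximation of $\phi(t^n)$ when $\delta t$ is large. The resolution is the scalar‑auxiliary‑variable mechanism: when $\bar E$ is large, $\xi=\bar{q}^{n}/\bar E$ is forced to be small because $\bar{q}^{n}\le q^0$, and then $\eta$ is correspondingly small — of order $\bar E^{-1}$ — so the rescaled energy stays $O(1)$. Quantitatively I would split on the size of $\xi$. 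If $\xi\le1$, then the function $g(t)=1-(1-t)^{k+1}$ satisfies $g(0)=0$ and $0\le g'(t)=(k+1)(1-t)^k\le k+1$ on $[0,1]$, so $0\le\eta\le(k+1)\xi$ and $\eta^{2}\bar E\le(k+1)^2\xi^2\bar E=(k+1)^2\xi\,\bar{q}^{n}\le(k+1)^2 q^0$, whence also $\eta^{4}\bar E\le(k+1)^2\eta^{2}\bar E\le(k+1)^4 q^0$ and $\eta^{2}\le(k+1)^2$. If $\xi>1$, then $\bar E=\bar{q}^{n}/\xi<\bar{q}^{n}\le q^0$ and $\xi\le\bar{q}^{n}\le q^0$ (since $\bar E\ge1$), so $|\eta|\le 1+(q^0)^{k+1}$ and the three quantities are again bounded by fixed powers of $q^0$. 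Collecting these (the degenerate case $\eta=0$ being trivial, with $E(\phi^{n},u^{n})=\tfrac{|\Omega|}{4\varepsilon^2}+1$), I obtain $E(\phi^{n},u^{n})\le C$ with $C$ depending only on $q^0,k,|\Omega|,\varepsilon,\lambda,K$ and $c_m$, uniformly in $n$ and in $\delta t$; the asserted bounds on $\|\nabla\phi^{n}\|_0$ and $\|u^{n}\|_0$ then follow from the coercivity estimate above.
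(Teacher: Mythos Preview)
Your argument is correct and shares the same core mechanism as the paper's proof: both hinge on the observation that $\eta_k^{n}$ is comparable to $\xi_k^{n}=\bar q^{n}/\bar E$, so that when $\bar E$ is large the rescaling factor $\eta_k^{n}$ is forced to be small of order $\bar E^{-1}$, and the product stays bounded. The differences are in execution. First, the paper avoids your case split on $\xi\le1$ versus $\xi>1$ by writing $\eta_k^{n}=1-(1-\xi_k^{n})^{k+1}=\xi_k^{n}P_k(\xi_k^{n})$ for a degree-$k$ polynomial $P_k$; since $\bar E\ge1$ gives $0\le\xi_k^{n}\le q^0$ in one stroke, $|P_k(\xi_k^{n})|$ is bounded by a constant depending only on $q^0,k$, and hence $|\eta_k^{n}|\le c\,|\xi_k^{n}|\le c/\bar E$. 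Second, and more substantially, the paper does not bound the full energy $E(\phi^{n},u^{n})$ at all: it goes straight to the quadratic piece
\[
\tfrac{(1-\sigma)^2}{2}\|\nabla\phi^{n}\|_0^2+\tfrac{\lambda}{2\varepsilon K}\|u^{n}\|_0^2
\le (\eta_k^{n})^2\!\int_\Omega\!\Big(\tfrac12 m^2(\nabla\bar\phi^{n})|\nabla\bar\phi^{n}|^2+\tfrac{\lambda}{2\varepsilon K}(\bar u^{n})^2\Big)d\mathbf{x}
\le \frac{cA}{(A+1)^2}\le c,
\]
with $A$ the displayed integral. This sidesteps your detour through the $F$-integral (the Cauchy--Schwarz estimate on $\int(\bar\phi^{n})^4$), which is correct but unnecessary for the stated conclusion. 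Your route does buy a little more---a uniform bound on $E(\phi^{n},u^{n})$ itself---at the cost of the extra bookkeeping; the paper's route is shorter because it targets exactly the quantities in the theorem.
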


\begin{proof}
Summing \eqref{2.4c} and \eqref{2.6} yields:
\bex
q^{n+1}-q^n\leq 0.
\eex
This gives \eqref{2.9}.\\
Now we prove the boundedness of $\displaystyle \|\nabla\phi^{n}\|_0$ and $\|u^n\|_0$.
First, it follows from \eqref{2.12} and \eqref{2.9} that for all $n\ge 0$,
\bex
\bar{q}^{n+1} \leq q^n \leq q^0:=E(\phi^{0}, u^{0}).
\eex
Furthermore, according to \eqref{2.4d}, we have
\begin{equation}\label{2.13}
|\xi_k^{n+1}|=\frac{\bar{q}^{n+1}}{E(\bar{\phi}^{n+1},\bar{u}^{n+1})}
\leq \frac{q^0}{E(\bar{\phi}^{n+1},\bar{u}^{n+1})}.
\end{equation}
According to \eqref{2.4d} again, we obtain
\be\label{etak}
\eta_k^{n+1}=1-(1-\xi_k^{n+1})^{k+1}=\xi_k^{n+1} P_k(\xi_k^{n+1}),
\ee
where $P_k$ is a polynomial of degree $k$.
Noticing $E(\bar{\phi}^{n+1},\bar{u}^{n+1})\ge 1$ (see Remark \ref{rm0}), we deduce from \eqref{2.13} that $|\xi_k^{n+1}|$ is bounded by $q^0$. Therefore, $P_k(\xi_k^{n+1})$ is bounded by a constant, which depends only on $q^0$ and $k$.
It then follows from \eqref{etak}, \eqref{2.13}, and the definition of $E$:
\begin{equation}\label{2.14}
|\eta_k^{n+1}|
\leq
\frac{c}{\int_{\Omega}\left(\frac{1}{2}|m(\nabla \bar{\phi}^{n+1}) \nabla \bar{\phi}^{n+1}|^2+\frac{\lambda}{2 \varepsilon K} (\bar{u}^{n+1})^2\right)d \mathbf{x}+1}.
\end{equation}
Finally, we deduce from the update step \eqref{2.4e}, i.e. $\phi^{n+1}=\eta_k^{n+1} \bar{\phi}^{n+1}$, $u^{n+1}=\eta_k^{n+1} \bar{u}^{n+1}$, and the fact that
$m^2(\nabla\bar{\phi}^{n+1})\geq(1-\sigma)^2$:
\bex
&&\int_{\Omega} \left(\frac{1}{2}(1-\sigma)^2|\nabla\phi^{n+1}|^2+\frac{\lambda}{2 \varepsilon K} (u^{n+1})^2\right)d \mathbf{x}\\
&&\leq (\eta_k^{n+1})^2\int_{\Omega} \left(\frac{1}{2}m^2(\nabla\bar{\phi}^{n+1})|\nabla\bar{\phi}^{n+1}|^2+\frac{\lambda}{2 \varepsilon K} (\bar{u}^{n+1})^2 \right)d \mathbf{x} \\
&&  \leq
\frac{\displaystyle c\int_{\Omega}\left(\frac{1}{2}|m(\nabla \bar{\phi}^{n+1}) \nabla \bar{\phi}^{n+1}|^2+\frac{\lambda}{2 \varepsilon K} (\bar{u}^{n+1})^2\right)d \mathbf{x}}
{\displaystyle\left(\int_{\Omega}\left(\frac{1}{2}|m(\nabla \bar{\phi}^{n+1}) \nabla \bar{\phi}^{n+1}|^2+\frac{\lambda}{2 \varepsilon K} (\bar{u}^{n+1})^2\right)d \mathbf{x}+1\right)^2}
\leq c.
\eex
This completes the proof.
\end{proof}

\begin{remark}
$\ $

1) It is observed from the proof of Theorem \ref{thm1} that the auxiliary variable
plays a key role in establishing the boundedness of
$\|\nabla\phi^{n+1}\|_0$ and $\|u^{n+1}\|_0$ (therefore the stability of the scheme).
As mentioned in Remark \ref{rm1} 1),
setting ${\phi}^{n+1}=\bar{\phi}^{n+1}$ and $u^{n+1}=\bar{u}^{n+1}$ without
the auxiliary variable $\xi^{n+1}_k$ and $\eta^{n+1}_k$ is not stable.
Contrarily, adjusting dynamically
$\bar{\phi}^{n+1}$ and $\bar{u}^{n+1}$ by multiplying the
factor $\eta^{n+1}_k$ (which is close to 1) makes
$\eta^{n+1}_k\bar{\phi}^{n+1}$ and $\eta^{n+1}_k\bar{u}^{n+1}$ bounded,
thus makes the scheme stable.

2) It is desirable that the numerical scheme maintains the energy dissipation, i.e.,
$E(\phi^{n+1},u^{n+1}) \leq E(\phi^{n},u^{n})$. However this can not be proved
theoretically, although our numerical experiment implies this is true.
In Theorem \ref{thm1}, it is proved $q^{n+1}\le q^n$.
However, in general we don't have $q^{n+1}=E(\phi^{n+1},u^{n+1})$.
As explained in Remark \ref{rm1} 5), the updating step \eqref{2.5}
can guarantee that $q^{n+1}$ is as close as possible to $E(\phi^{n+1},u^{n+1})$.
Furthermore, by virtue of the parameter selection algorithm, i.e., Algorithm \ref{alg1},
we have $\zeta^{n+1}=0$ in the first two cases. Therefore we have
$q^{n+1}=E(\phi^{n+1},u^{n+1})$ in these two cases.
The only exception is the third case, in which $\zeta^{n+1}>0$.
However (and surprisingly), this case has never happened in our numerical
tests for unknown reason; see Section \ref{sect-num}.

3) It is very interesting to note that the stability of the scheme is
independent on how $\bar\phi^{n+1}$ and $\bar u^{n+1}$ are computed.
In our schemes, $\bar\phi^{n+1}$ and $\bar u^{n+1}$ are calculated in \eqref{2.4a}
and \eqref{2.4b} following a BDF-$k$ approach in which the linear terms are treated
implicitly while nonlinear terms are treated explicitly. In particular,
two stabilization parameters $S_1$ and $S_2$ are added in \eqref{2.4a} in
the calculation of the phase function $\bar\phi^{n+1}$. Although the presence
of $S_1$ and $S_2$ in \eqref{2.4a} plays no role in the stability proof in Theorem
\ref{thm1}, our numerical examples show that they are helpful in capturing
meaningful crystal growth phenomena
when relatively larger time step sizes are used in the simulation.
\end{remark}

\subsection{The spatial discretization}
In this section, we describe spatial discretization of the semi-discrete problems to be solved
at each time step, and give some implementation details.
Since the Fourier spectral method is particularly well-suited for handling periodic problems, it will be utilized for the spatial discretization.
For simplicity, we consider a two-dimensional square domain $\Omega$.
It is observed in the scheme \eqref{2.4}-\eqref{2.5} that only the equations \eqref{2.4a} and \eqref{2.4b}
need to be discretized in space.
The goal of Fourier method is to seek the approximate solutions
$\bar\phi_N^{n+1}$ and $\bar u_N^{n+1}$
in the form of truncated Fourier expansions:
\be\label{Fexp}
\bar\phi_N^{n+1}(\mathbf{x})=\sum_{k_1, k_2=-N}^N \hat{\phi}_{\mathbf{k}}^{n+1} \exp (-\mathrm{i} {\mathbf{k} \mathbf{x}}),
\ \ \
\bar u_N^{n+1}(\mathbf{x})=\sum_{k_1, k_2=-N}^N \hat{u}_{\mathbf{k}}^{n+1} \exp (-\mathrm{i} {\mathbf{k} \mathbf{x}}), \ \ \mathbf{x}\in \Omega,
\ee
where $\mathrm{i}=\sqrt{-1}$, $\mathbf{k} = (k_1, k_2)$ and $N$ is a positive integer.
The spectral coefficients $\hat{\phi}_{\mathbf{k}}^{n+1}$ and $\hat{u}_{\mathbf{k}}^{n+1}$ are obtained by
solving respectively the equation
\be\label{space_1}
(\tau\frac{\alpha_k}{\delta t}+\frac{S_2}{\varepsilon^2}+ S_1|\mathbf{k}|^2)\hat{\phi}_{\mathbf{k}}^{n+1}
=\Big\{\frac{\tau}{\delta t}A_k(\phi_N^n)-\frac{\delta E_1}{\delta\phi}(B_k(\phi_N^n))-4\lambda\varepsilon F(B_k(\phi_N^n))B_k(u_N^n)\Big\}_{\mathbf{k}},
\ee
and
\be\label{space_2}
(\frac{\alpha_k}{\delta t}+D|\mathbf{k}|^2)\hat{u}_{\mathbf{k}}^{n+1}
=\Big\{\frac{1}{\delta t}A_k(u_N^n)+4\varepsilon^2KF(B_k(\phi_N^n))\frac{\alpha_k \bar{\phi}_N^{n+1}-A_k(\phi_N^n)}{\delta t}\Big\}_{\mathbf{k}},
\ee
which are derived by
applying the Fourier transform to \eqref{2.4a} and \eqref{2.4b}.
In the right hand sides of \eqref{space_1} and \eqref{space_2},
the $\{\cdot\}_{\mathbf{k}}$ has been used to represent the $\mathbf{k}$-th Fourier mode of the source terms.
We see that since the equations \eqref{2.4a} and \eqref{2.4b} are linear on
$\bar\phi_N^{n+1}$ and $\bar u_N^{n+1}$,
the spectral coefficients $\hat{\phi}_{\mathbf{k}}^{n+1}$ and $\hat{u}_{\mathbf{k}}^{n+1}$ can be computed one-by-one.
Once $\hat{\phi}_{\mathbf{k}}^{n+1}$ and $\hat{u}_{\mathbf{k}}^{n+1}$ are obtained from \eqref{space_1} and \eqref{space_2},
we use \eqref{Fexp} to get the approximate solutions $\bar\phi_N^{n+1}$ and $\bar u_N^{n+1}$.

The Fourier approximate solution $\bar{q}_N^{n+1}$ to $\bar{q}^{n+1}$ is explicit, given by
\be
\frac{1}{\delta t}(\bar{q}_N^{n+1}-q_N^n)=-\frac{\bar{q}_N^{n+1}}{E(\bar{\phi}_N^{n+1},\bar{u}_N^{n+1})} \mathcal{H}(\bar{\phi}_N^{n+1},\bar{u}_N^{n+1}).
\label{2.4c-F}
\ee

The full discrete version of the {\bf Step 2} is direct:  compute first the parameters
\be\label{2.4d-F}
& \xi_{k}^{n+1}=\frac{\bar{q}_N^{n+1}}{E(\bar{\phi}_N^{n+1},\bar{u}_N^{n+1})}, \quad
\eta_k^{n+1}=1-(1-\xi_{k}^{n+1})^{k+1},
\ee
then update $\phi_N^{n+1}, u_N^{n+1}$, and $q_N^{n+1}$ by
\be\label{2.4e-F}
\phi_N^{n+1}=\eta_k^{n+1} \bar{\phi}_N^{n+1}, \quad  u_N^{n+1}=\eta_k^{n+1} \bar{u}_N^{n+1},
\ee
and
\be\label{2.5-F}
q_N^{n+1}=\zeta^{n+1} \bar{q}_N^{n+1}+(1-\zeta^{n+1}) E(\phi_N^{n+1},u_N^{n+1}),
\ee
where $\zeta^{n+1}\in [0,1]$ is chosen such that
\bex
\frac{q_N^{n+1}-\bar{q}_N^{n+1}}{\delta t}\leq
\frac{\bar{q}_N^{n+1}}{E(\bar{\phi}_N^{n+1},\bar{u}_N^{n+1})} \mathcal{H}(\bar{\phi}_N^{n+1},\bar{u}_N^{n+1}).
\eex

To summarize, the full discrete problem can be implemented as follows.
 \begin{enumerate}

\item Compute the spectral coefficients of $\bar{\phi}_{N}^{n+1}$ from
\eqref{space_1}, then $\bar{\phi}_{N}^{n+1}$ itself from \eqref{Fexp}.
\item Compute the spectral coefficients of $\bar{u}_{N}^{n+1}$ from \eqref{space_2},
then $\bar{u}_{N}^{n+1}$ itself from \eqref{Fexp}.
\item Compute $\bar{q}_N^{n+1}$ by \eqref{2.4c-F}.
\item Compute $\xi_{k}^{n+1}$ and $\eta_{k}^{n+1}$ by \eqref{2.4d-F}.
\item Update $\phi_{N}^{n+1}$, $u_{N}^{n+1}$ by \eqref{2.4e-F}.
\item Update $q_N^{n+1}$ by \eqref{2.5-F}.

 \end{enumerate}

We see from the above algorithm that our proposed method is extremely efficient.
Since the time stepping scheme only requires to solve two linear elliptic equations
at each time step. The use of the Fourier spectral method for their spatial discretization
makes the computation of the solution (in the spectral space) in a totally separate way.
Note that the scheme proposed in \cite{li2022new} requires to solve four linear elliptic equations at each time step.

By following exactly the same lines as Theorem \ref{thm1}, we have the stability of the full discrete problem as follows.

\begin{theorem}\label{thm2}
 The fully discrete scheme \eqref{space_1}-\eqref{2.5-F} satisfies the energy dissipation law:
\bex
q_N^{n+1}\leq q_N^n, \ \ \forall n\ge 0.
\eex
In addition, the quantity
$\|\nabla\phi_{N}^{n}\|_0$ and $\|u_{N}^n\|_0$ remain bounded for all $n$.
\end{theorem}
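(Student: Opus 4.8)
The plan is to transcribe the proof of Theorem \ref{thm1} line by line, replacing each semi-discrete quantity by its Fourier--Galerkin counterpart; this works because the stability argument is entirely algebraic and never uses how $\bar\phi_N^{n+1}$ and $\bar u_N^{n+1}$ are obtained. First I would record that a parameter $\zeta^{n+1}\in[0,1]$ satisfying the constraint on $\zeta^{n+1}$ stated just after \eqref{2.5-F} exists: Algorithm \ref{alg1}, applied with the scalars $\bar q_N^{n+1}$, $E(\bar\phi_N^{n+1},\bar u_N^{n+1})$, $\mathcal{H}(\bar\phi_N^{n+1},\bar u_N^{n+1})$, $E(\phi_N^{n+1},u_N^{n+1})$ in place of their semi-discrete analogues, is a case analysis on the signs of these scalars and is insensitive to the spatial discretization, provided $\bar q_N^{n+1}\ge 0$. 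Adding \eqref{2.4c-F} to that constraint cancels the right-hand sides and yields $q_N^{n+1}-q_N^n\le 0$, which is the claimed dissipation.

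For the boundedness, I would first rewrite \eqref{2.4c-F} as $\bar q_N^{n+1}=q_N^n/\big(1+\delta t\,\mathcal{H}(\bar\phi_N^{n+1},\bar u_N^{n+1})/E(\bar\phi_N^{n+1},\bar u_N^{n+1})\big)$. Since $\mathcal{H}\ge 0$ (from \eqref{H}) and $E\ge 1$ (Remark \ref{rm0}), induction on $n$ gives $0\le \bar q_N^{n+1}\le q_N^n\le q_N^0:=E(\phi_N^0,u_N^0)$, and hence $\xi_k^{n+1}=\bar q_N^{n+1}/E(\bar\phi_N^{n+1},\bar u_N^{n+1})\in[0,q_N^0]$ because the denominator is at least $1$. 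Writing $\eta_k^{n+1}=1-(1-\xi_k^{n+1})^{k+1}=\xi_k^{n+1}P_k(\xi_k^{n+1})$ with $P_k$ a fixed polynomial of degree $k$, the bound on $\xi_k^{n+1}$ forces $|P_k(\xi_k^{n+1})|\le c(q_N^0,k)$, so
\[
|\eta_k^{n+1}|\le \frac{c}{\int_\Omega\big(\tfrac12|m(\nabla\bar\phi_N^{n+1})\nabla\bar\phi_N^{n+1}|^2+\tfrac{\lambda}{2\varepsilon K}(\bar u_N^{n+1})^2\big)\,d\mathbf{x}+1}.
\]
Combining this with the update \eqref{2.4e-F} and the pointwise bound $m^2(\nabla\bar\phi_N^{n+1})\ge(1-\sigma)^2$, exactly as at the end of the proof of Theorem \ref{thm1}, gives
\[
\int_\Omega\Big(\tfrac12(1-\sigma)^2|\nabla\phi_N^{n+1}|^2+\tfrac{\lambda}{2\varepsilon K}(u_N^{n+1})^2\Big)\,d\mathbf{x}\le (\eta_k^{n+1})^2\int_\Omega\Big(\tfrac12 m^2|\nabla\bar\phi_N^{n+1}|^2+\tfrac{\lambda}{2\varepsilon K}(\bar u_N^{n+1})^2\Big)\,d\mathbf{x}\le c,
\]
the last step using that a quantity of the form $a/(a+1)^2$ is uniformly bounded. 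This yields the uniform bounds on $\|\nabla\phi_N^n\|_0$ and $\|u_N^n\|_0$.

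I do not expect a genuine obstacle; the proof is a verbatim repetition of that of Theorem \ref{thm1}. The only two points deserving a sentence of justification are: (i) that the Fourier--Galerkin discrete fields still satisfy $\mathcal{H}\ge 0$ and $E\ge 1$ --- immediate, since $E$ and $\mathcal{H}$ are integrals of nonnegative integrands plus the added constant $1$, evaluated at genuine functions on $\Omega$; and (ii) that Algorithm \ref{alg1} still returns $\zeta^{n+1}\in[0,1]$ --- pure scalar arithmetic, unchanged by the spatial discretization. Everything else is the identical chain of inequalities.
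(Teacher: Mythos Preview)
Your proposal is correct and matches the paper's approach exactly: the paper does not give a separate proof of Theorem~\ref{thm2} but simply states that it follows ``by following exactly the same lines as Theorem~\ref{thm1}'', and your write-up is precisely that transcription, with the appropriate observation that the argument is purely algebraic in the scalars $\bar q_N^{n+1}$, $E$, $\mathcal{H}$ and hence insensitive to the spatial discretization.
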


\section{Numerical experiments}\label{sect-num}

In this section, a series of numerical examples are provided to verify the efficiency of the proposed schemes.
We start with verifying the convergence of the schemes in the case of isotropy and anisotropy by constructing appropriate exact solutions, then testing the stability of the  schemes. Finally we will demonstrate the reliability of the new method by simulating the dendrite growth with fourfold anisotropy and sixfold anisotropy under different values of the latent heat parameter $K$.
In all examples, the domain is chosen as $(0,2\pi)^d$, where $d=2,3$.

\subsection {Convergence order and stability}

We focus on the convergence order of the time stepping schemes.
To this end we use $128$ Fourier modes in the spatial discretization,
which is large enough so that the spacial discretization error is negligible compared to the temporal discretization. First of all, we consider the isotropic case.

\begin{example}\label{exp.1}
\rm{(Accuracy test in isotropic case) Set $\sigma=0$. Consider the equation}
\begin{equation*}
\left\{
             \begin{array}{ll}
\tau\phi_t=\Delta\phi-f(\phi)-4\lambda\varepsilon F(\phi)u+f_1,\\
u_t=D\Delta u+4\varepsilon^2KF(\phi)\phi_t+f_2,
             \end{array}
           \right.
\end{equation*}
with the fabricated exact solution:
\begin{equation*}
\left\{
             \begin{array}{ll}
\phi(x,y,t)=\sin(x)\cos(y)\cos(t),\\
u(x,y,t)=\cos(x)\sin(y)\cos(t),
             \end{array}
           \right.
\end{equation*}
where $f_1$ and $f_2$ represent the corresponding external force terms. Set the parameters as follows
$$
\begin{aligned}
\tau=10, \varepsilon=\lambda=D=K=1, S_1=4, S_2=4.
 \end{aligned}
$$
\end{example}

In Figure \ref{time_order_iso}, we present the $L^2$ errors in log-log scales of phase field $\phi$ and the temperature field $u$ for three values of the order parameter $k$ in BDF$k$. It is observed that the scheme using BDF$k$ with $k$=1,2,3 result in the convergence order $1$, $2$, and $3$ respectively, which is in a good agreement with
the expected convergence rates of the schemes.

\begin{example}\label{exp.2}
\rm{(Accuracy test in anisotropic case)
Consider the crystal growth model with fourfold anisotropy in the domain $(0,2\pi)^2$ as follows:}
\begin{equation}\label{e2}
\left\{
             \begin{array}{ll}
\tau\phi_t=\nabla \cdot(m^2(\nabla \phi) \nabla \phi+|\nabla \phi|^2 m(\nabla \phi) \boldsymbol{H}(\phi))-f(\phi)-4\lambda\varepsilon F(\phi)u+f_3,\\
u_t=D\Delta u+4\varepsilon^2KF(\phi)\phi_t+f_4,
             \end{array}
           \right.
\end{equation}
where $f_3$ and $f_4$ are force terms fabricated such that
the problem admits the same exact solution as Example \ref{exp.1}.
The parameters are set as follows:
$$
\begin{aligned}
\tau=4e3, \varepsilon=1, \lambda=1, D=1, K=0.01, \sigma=0.05,\beta=4, S_1=4, S_2=4.
 \end{aligned}
$$
\end{example}
The $L^2$-errors in log-log scales of phase field $\phi$ and the temperature field $u$ for $k=1,2,3$ are shown in Figure \ref{time_order_aniso}, from which we observe
the expected convergence rates, although the accuracy is slightly lower than the isotropic case.
\begin{figure}[h]
\begin{minipage}{0.3 \textwidth}
\centering
\includegraphics[width=\textwidth]{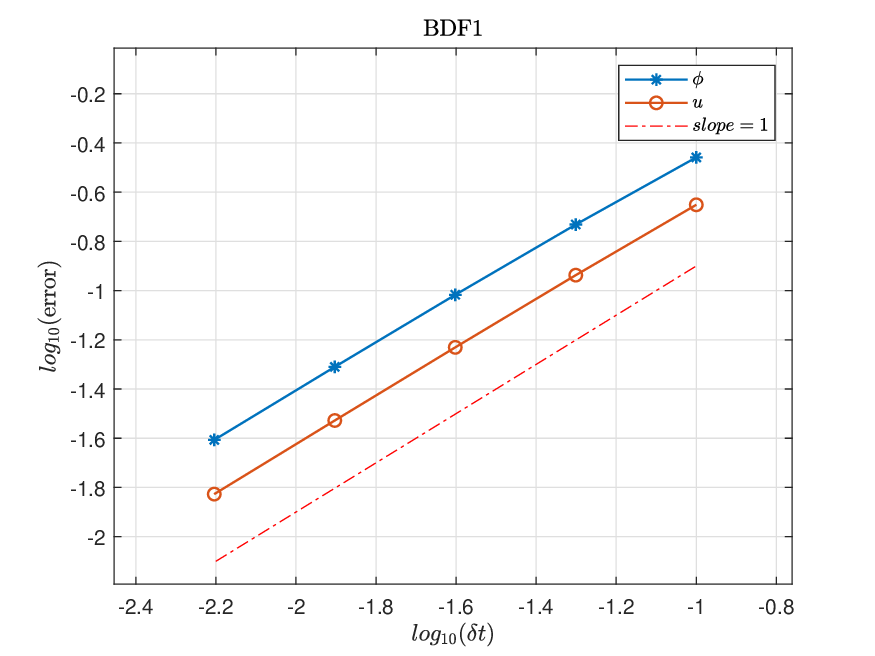}
\end{minipage}
\begin{minipage}{0.3 \textwidth}
\centering
\includegraphics[width=\textwidth]{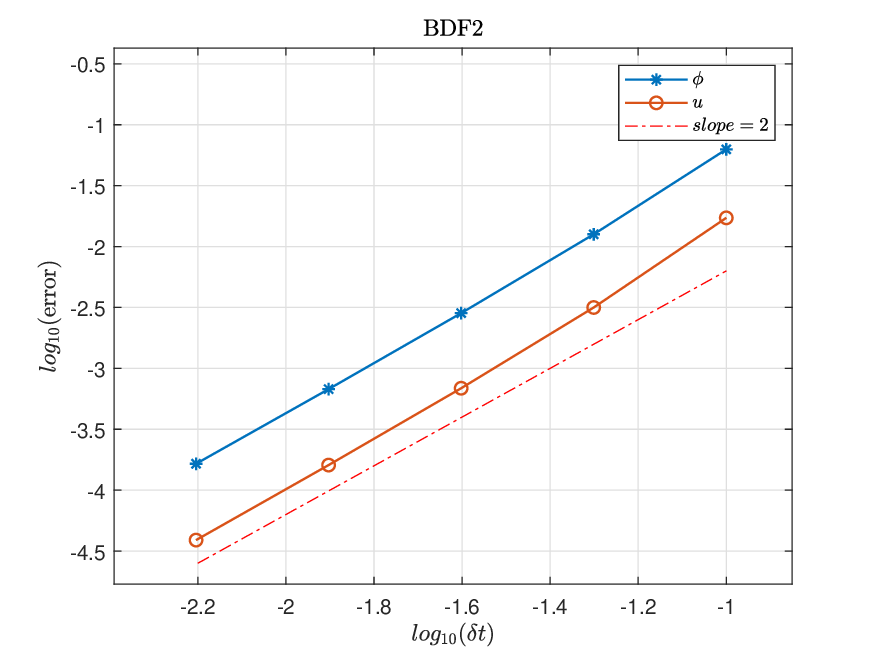}
\end{minipage}
\begin{minipage}{0.3 \textwidth}
\centering
\includegraphics[width=\textwidth]{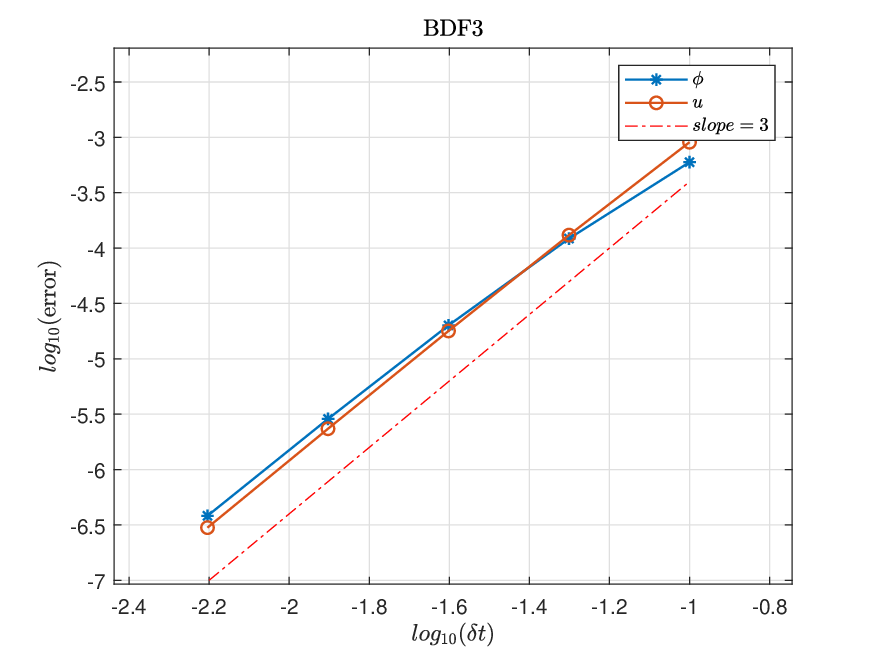}
\end{minipage}
\caption{(Example \ref{exp.1}) Accuracy in time of $\phi$ and $u$ for three order parameters $k=1,2,3$ in BDF$k$ in the isotropic case.}
\label{time_order_iso}
\end{figure}

\begin{figure}[h]
\begin{minipage}{0.3 \textwidth}
\centering
\includegraphics[width=\textwidth]{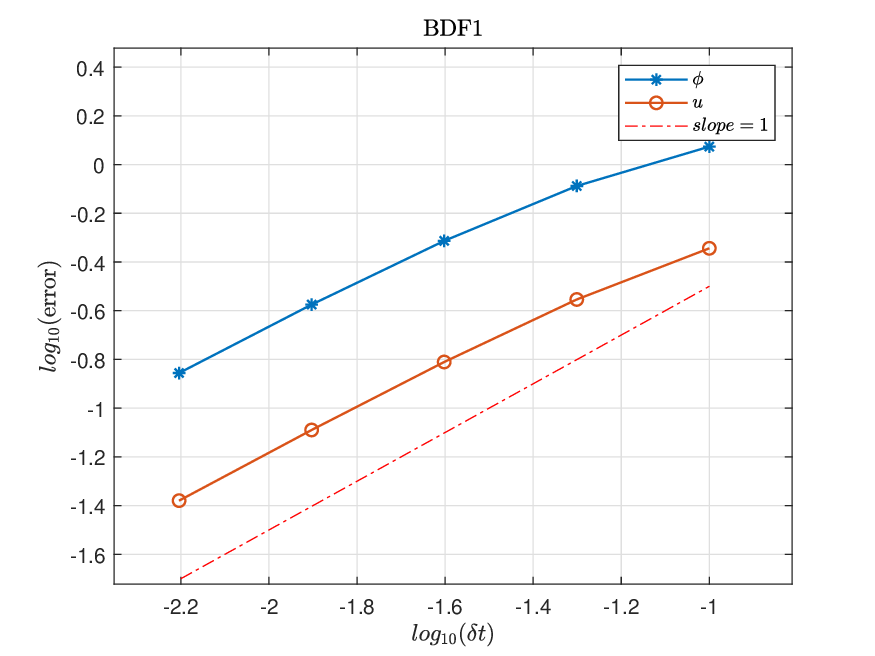}
\end{minipage}
\begin{minipage}{0.3 \textwidth}
\centering
\includegraphics[width=\textwidth]{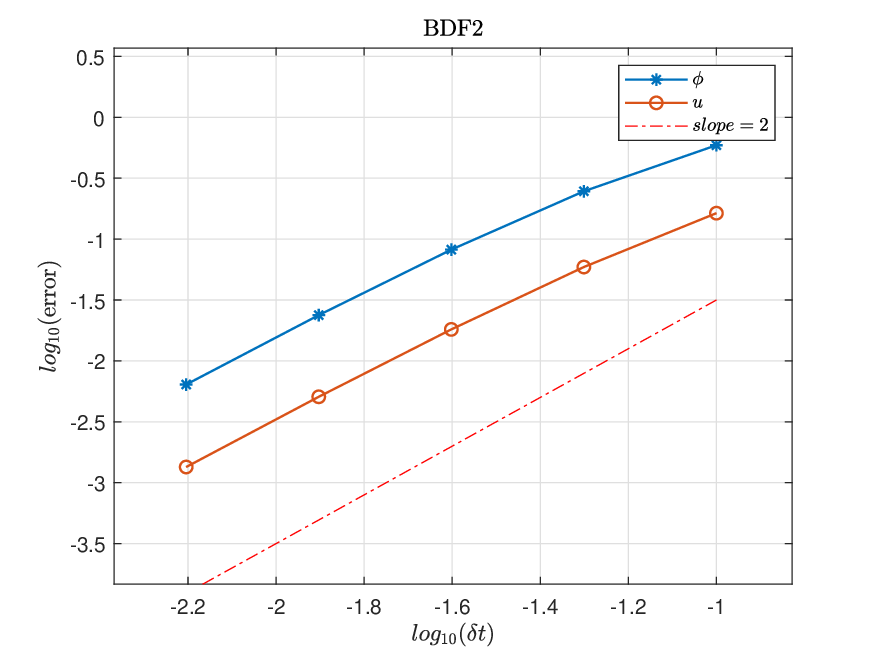}
\end{minipage}
\begin{minipage}{0.3 \textwidth}
\centering
\includegraphics[width=\textwidth]{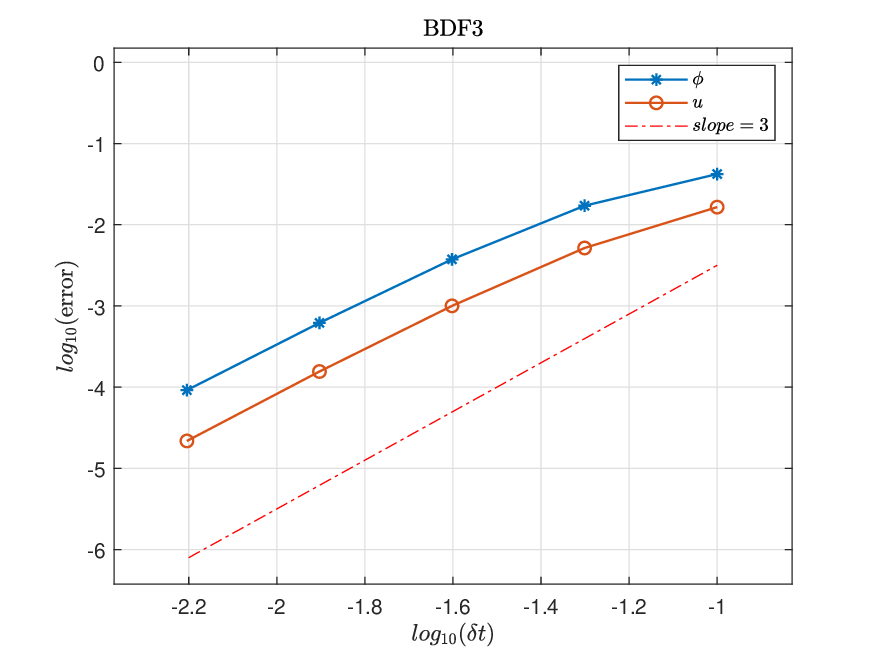}
\end{minipage}
\caption{(Example \ref{exp.2}) Accuracy in time of $\phi$ and $u$ for three order parameters $k=1,2,3$ in BDF$k$ at the anisotropic situation. }
\label{time_order_aniso}
\end{figure}

Now we turn to investigate the stability of the proposed schemes
by varying the time step size.
 \begin{example}\label{exp.3}
\rm{(Stability test) Consider the crystal growth model \eqref{2.1} with the initial condition}
\begin{equation}\label{e3}
\left\{
             \begin{array}{ll}
\phi(x,y,0)=\tanh\Big(\frac{1.5-\sqrt{(x-\pi)^2+(y-\pi)^2}}{0.1}\Big),\\
u(x,y,0)=-0.55.
             \end{array}
           \right.
\end{equation}
\end{example}
The parameters are taken as
$$
\begin{aligned}
\tau=1e2, \varepsilon=0.1, \lambda=1, D=2.25e-1,  K=1, \sigma=0.05,\beta=4.
\end{aligned}
$$
In this example, we use the scheme with $k=1$ for the time discretization, and the Fourier modes $N=128$ for the space discretization. The calculation is run up to $T=10$. In Figure \ref{energy_stable},
we plot the energy as a function of time by using
different time step sizes. A comparison is made between the stabilization
($S_1=S_2=4$) and without stabilization ($S_1=S_2=0$).
Comparing the figures (a) and (b), we see that the energy keeps decaying with the stabilization $S_1=S_2=4$, while it loses dissipation without stabilization for larger time
step sizes.
This clearly demonstrates the positive role of the stabilization terms in stabilizing the scheme.

\begin{figure}[h]
\subfigure[\scriptsize (a) $S_1=4, S_2=4$.]{
\begin{minipage}{0.45 \textwidth}
\centering
\includegraphics[width=\textwidth]{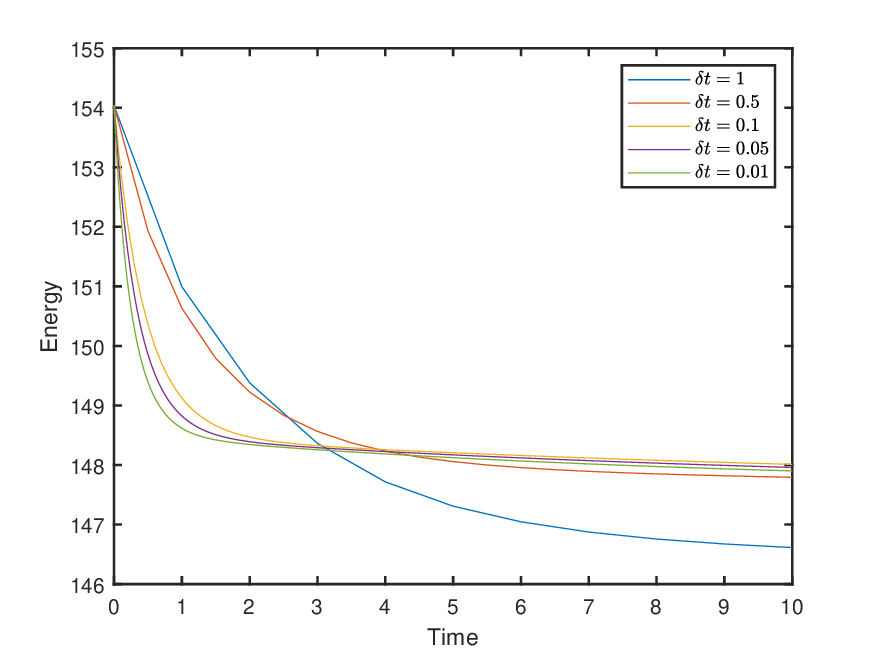}
\end{minipage}
}
\subfigure[\scriptsize (b) $S_1=0, S_2=0$.]{
\begin{minipage}{0.45 \textwidth}
\centering
\includegraphics[width=\textwidth]{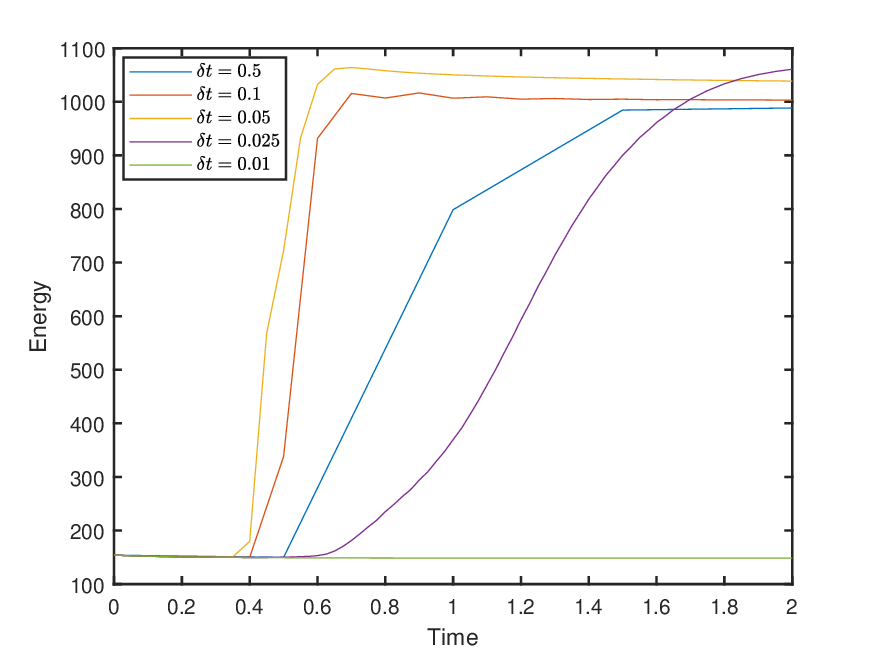}
\end{minipage}
}
\caption{(Example \ref{exp.3}) (a) Evolution in time of the energy with $S_1=S_2=4$;
(b) evolution of the energy with $S_1=S_2=0$. }
\label{energy_stable}
\end{figure}

\subsection{Fourfold dendrite crystal growth in 2D}

This subsection is devoted to simulate fourfold dendrite crystal growth and
investigate the effect of the latent heat parameter $K$ on crystal shape.
We use the proposed scheme with $k=3$ for the temporal discretization with $\delta t=0.01$ and the Fourier spectral method for the spatial discretization with $N=512$.

 \begin{example}\label{exp.4}
\rm{(Fourfold anisotropy crystal growth) Consider the anisotropic phase field model
with the following initial conditions:}
\begin{equation*}
\left\{
             \begin{array}{ll}
\phi(x,y,0)=\tanh\Big(\frac{0.02-\sqrt{(x-\pi)^2+(y-\pi)^2}}{0.072}\Big),\\
u(x,y,0)=\left\{
             \begin{array}{ll}
             0,  &\phi>0,\\
             -0.55, &otherwise.
                          \end{array}
           \right.
             \end{array}
           \right.
\end{equation*}
Set
$$
\begin{aligned}
\tau=4.4e3, \varepsilon=1.12e-2, \lambda=380, D=2.25e-4, \sigma=0.05,\beta=4.
\end{aligned}
$$
\end{example}
We plot in Figures \ref{fig:fourfoldK06}-\ref{fig:fourfoldK08} some snapshots of the phase field $\phi$
and the temperature field $u$ computed for the values 0.6, 0.7 and 0.8 of the latent heat parameter $K$.
We observe from these figures that the crystal surface, represented by the isocontours of $\{\phi = 0\}$, grows in time.  A star shape with four branches are always formed in all cases starting with a tiny nucleus.
However, the latent heat parameter has significant impact on the width of the branch of the growing crystals:
larger is $K$, thinner is the width of the branches, and sharper are the tips. This is consistent with the results reported in \cite{kobayashi1993modeling,yang2019efficient,zhang2020fully,yang2021novel,li2022new}.
The isocontours of the temperature of each simulation are also shown in Figures \ref{fig:fourfoldK06}-\ref{fig:fourfoldK08}.
We see that the contours of $u$ take same dendrite crystal shape as the phase field. The physical explanation
of this observation is that the heat is propagating only at the interface.

The dissipation behavior of the original energy in time is shown in Figure \ref{fourfold_energy}(a).
The decay feature of the original energy for all tested $K$ reflects good stability property
of the scheme employed in the calculation.
Figure \ref{fourfold_energy}(b) displays the crystal area,
defined by the quantity $\int_{\Omega}\frac{1+\phi}{2}dx$, indicating
that the area of the crystal increases in time and with decreasing $K$.
This is also in a good agreement with the existing results; see, e.g., \cite{kobayashi1993modeling,karma1998quantitative,yang2020efficient,li2022new}.

\begin{figure}[htbp]
	\vspace{-0.35cm}
	\subfigtopskip=2pt
	\subfigbottomskip=2pt
	\subfigcapskip=-5pt
	\centering	
    \subfigure[\scriptsize (a) Snapshots of the phase field $\phi$.]{
		\begin{minipage}[t]{0.185\linewidth}
			\centering
			\includegraphics[width=1\linewidth]{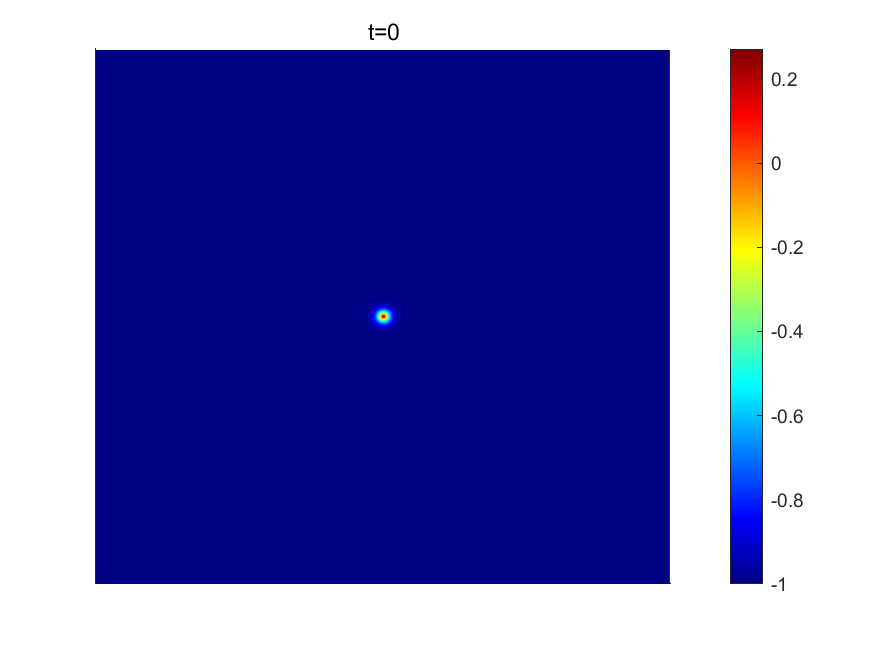}
		\end{minipage}
		\begin{minipage}[t]{0.185\linewidth}
			\centering
			\includegraphics[width=1\linewidth]{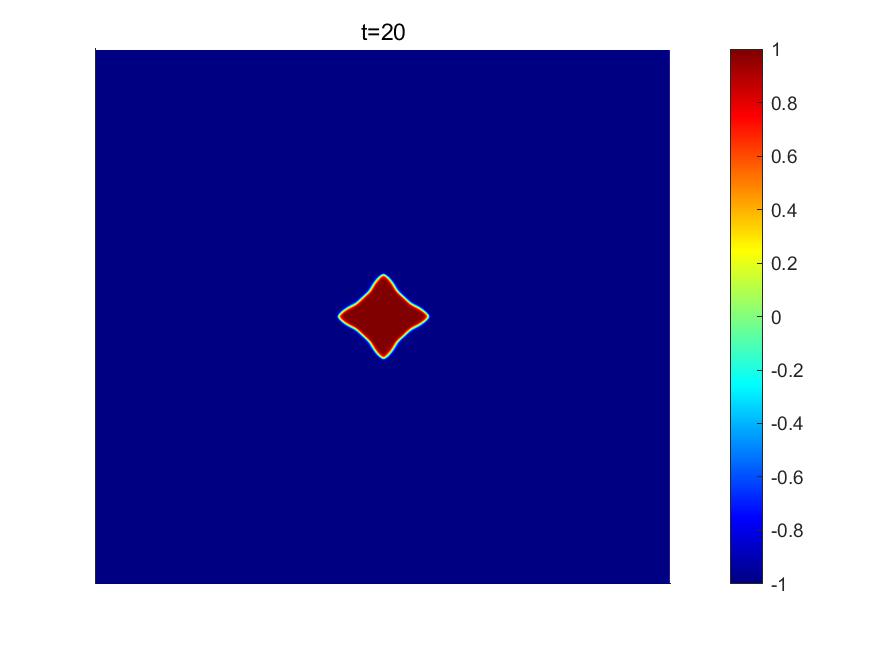}
		\end{minipage}
		\begin{minipage}[t]{0.185\linewidth}
			\centering
			\includegraphics[width=1\linewidth]{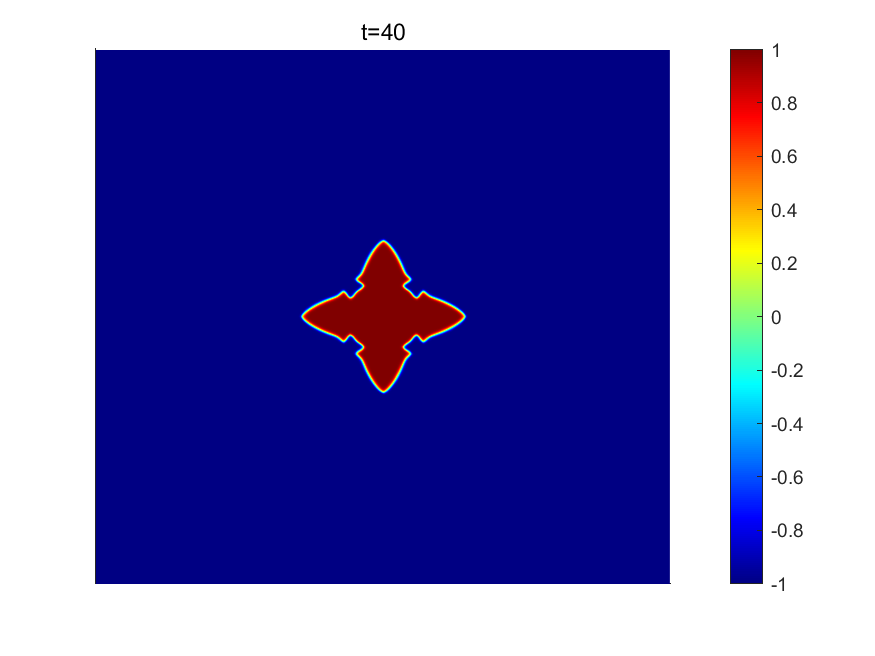}
		\end{minipage}
		\begin{minipage}[t]{0.185\linewidth}
			\centering
			\includegraphics[width=1\linewidth]{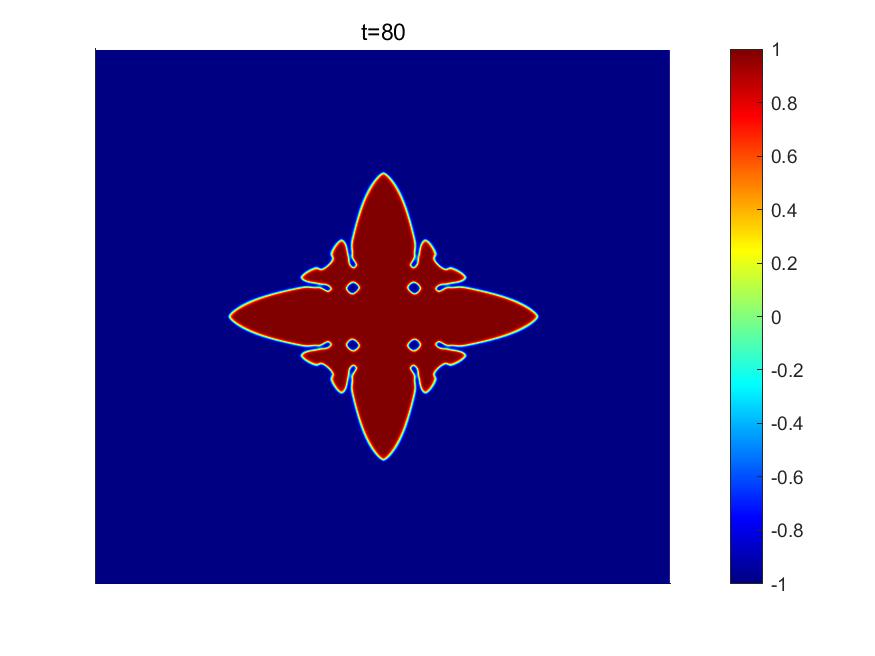}
		\end{minipage}
		\begin{minipage}[t]{0.185\linewidth}
			\centering
			\includegraphics[width=1\linewidth]{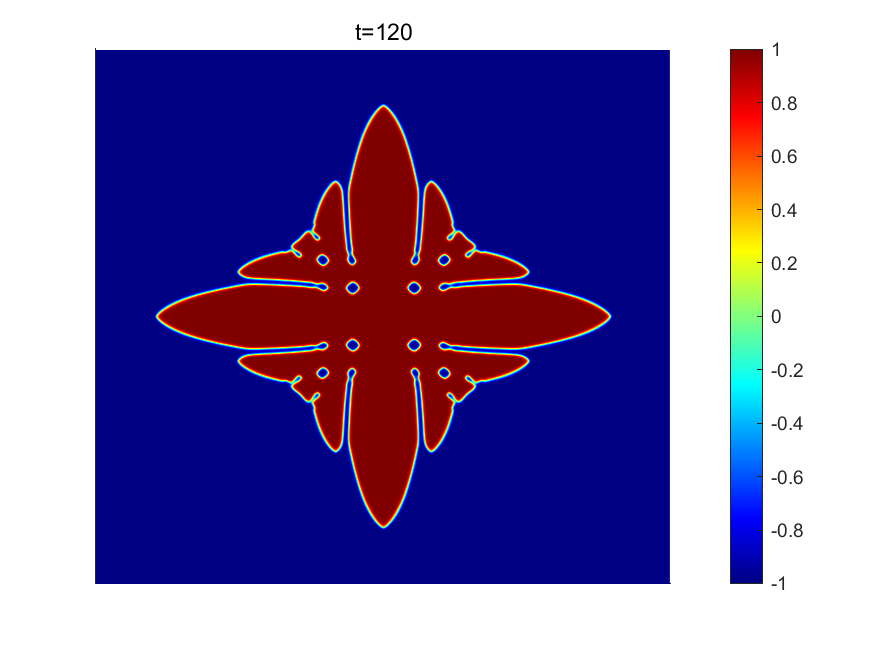}
		\end{minipage}
	}
    \subfigure[\scriptsize (b) Snapshots of the temperature $u$.]{
		\begin{minipage}[t]{0.185\linewidth}
			\centering
			\includegraphics[width=1\linewidth]{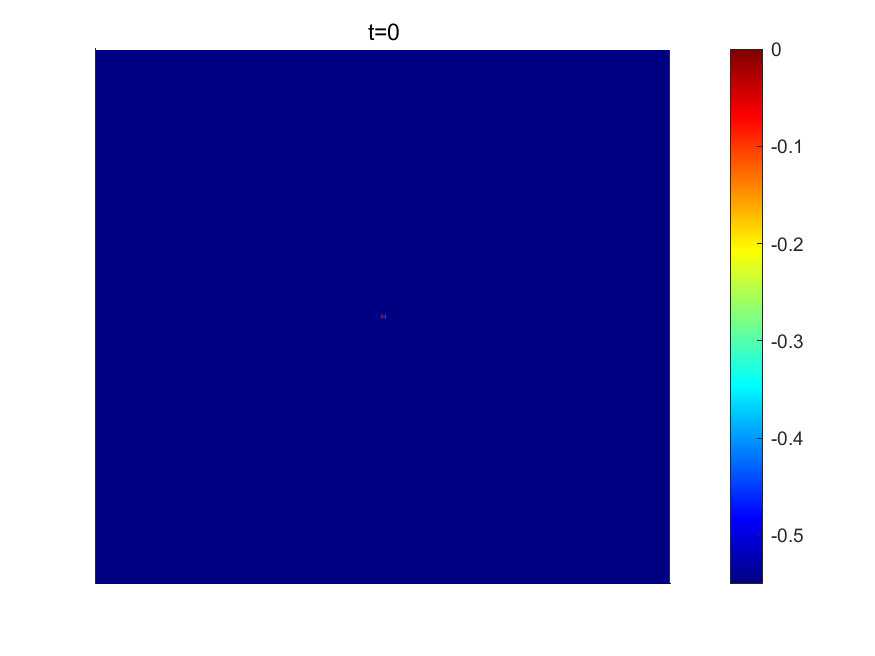}
		\end{minipage}
		\begin{minipage}[t]{0.185\linewidth}
			\centering
			\includegraphics[width=1\linewidth]{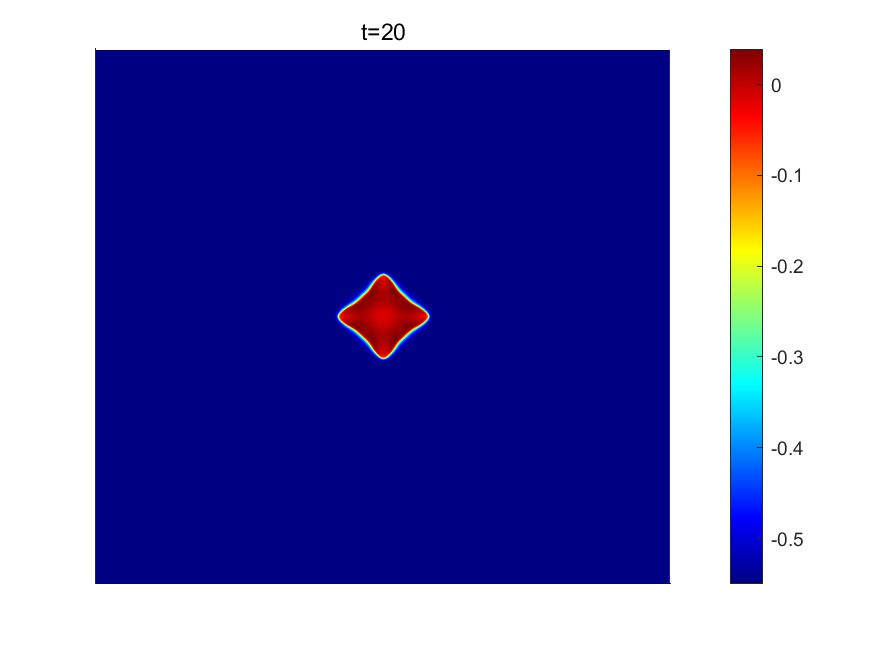}
		\end{minipage}
		\begin{minipage}[t]{0.185\linewidth}
			\centering
			\includegraphics[width=1\linewidth]{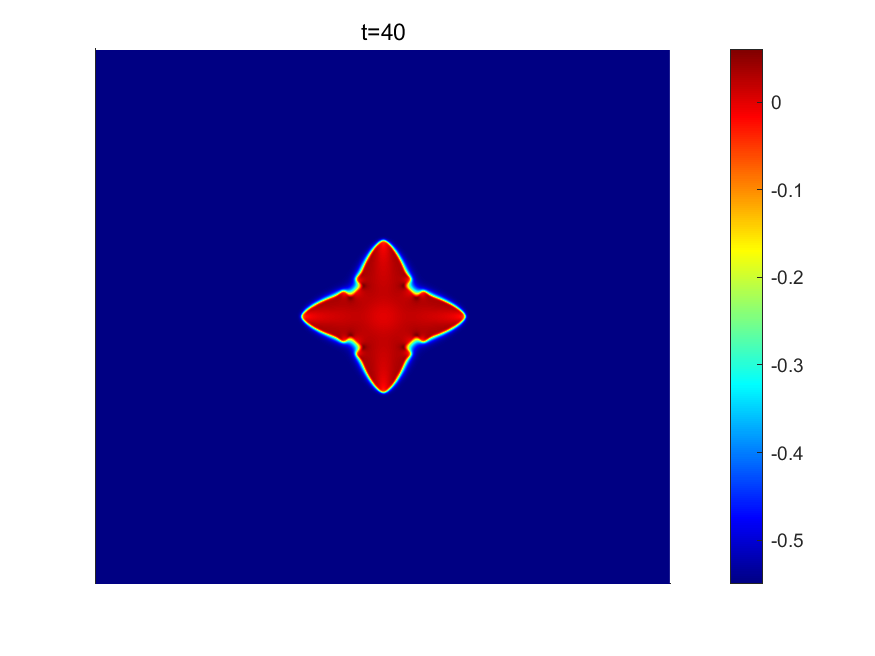}
		\end{minipage}
		\begin{minipage}[t]{0.185\linewidth}
			\centering
			\includegraphics[width=1\linewidth]{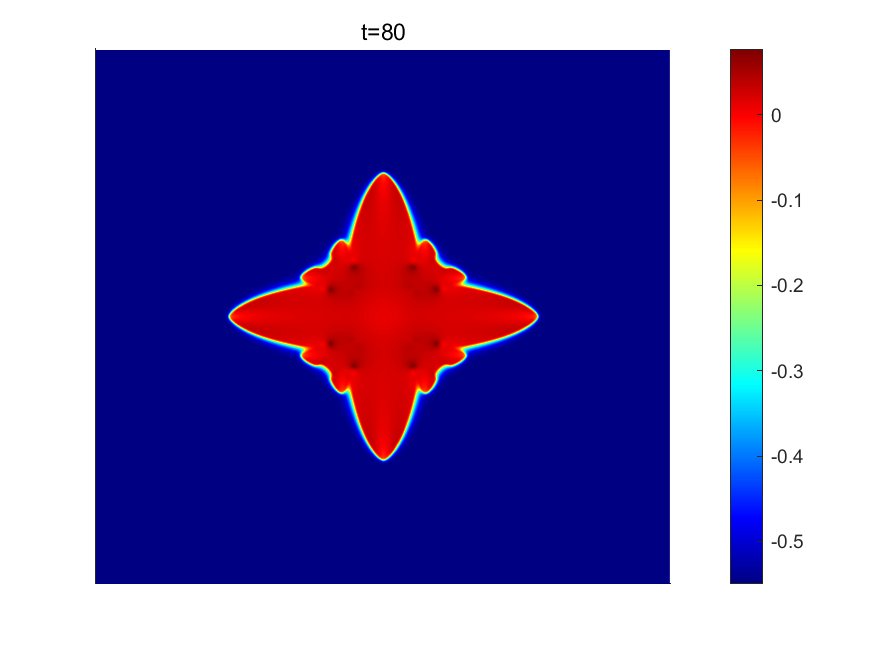}
		\end{minipage}
		\begin{minipage}[t]{0.185\linewidth}
			\centering
			\includegraphics[width=1\linewidth]{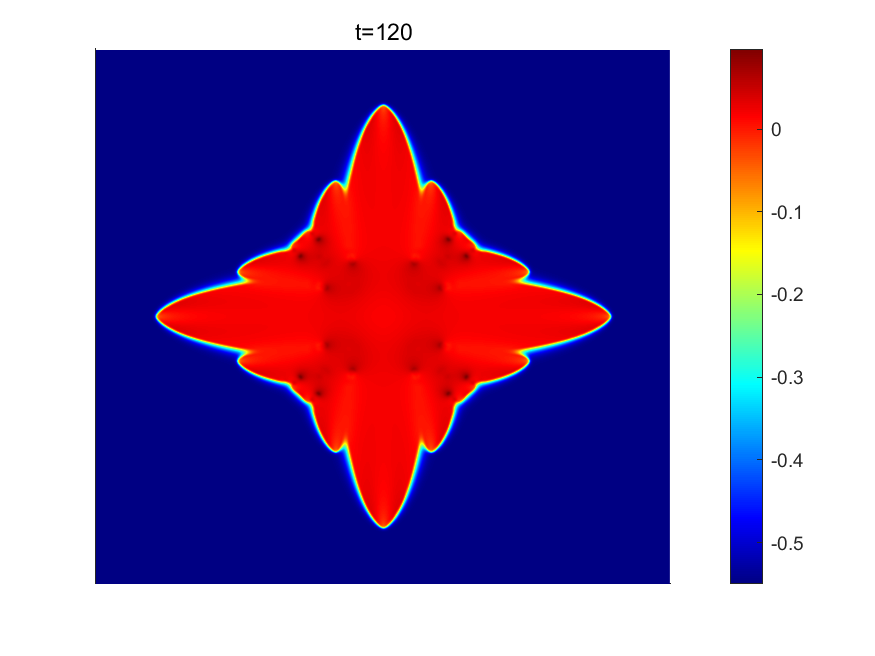}
		\end{minipage}
	}
	\caption{(Example \ref{exp.4}) Dendritic crystal growth in time with fourfold anisotropy $\beta=4$ and latent heat parameter $K=0.6$.}
	\label{fig:fourfoldK06}
\end{figure}

\begin{figure}[htbp]
	\vspace{-0.35cm}
	\subfigtopskip=2pt
	\subfigbottomskip=2pt
	\subfigcapskip=-5pt
	\centering	
    \subfigure[\scriptsize (a) Snapshots of the phase field $\phi$.]{
		\begin{minipage}[t]{0.185\linewidth}
			\centering
			\includegraphics[width=1\linewidth]{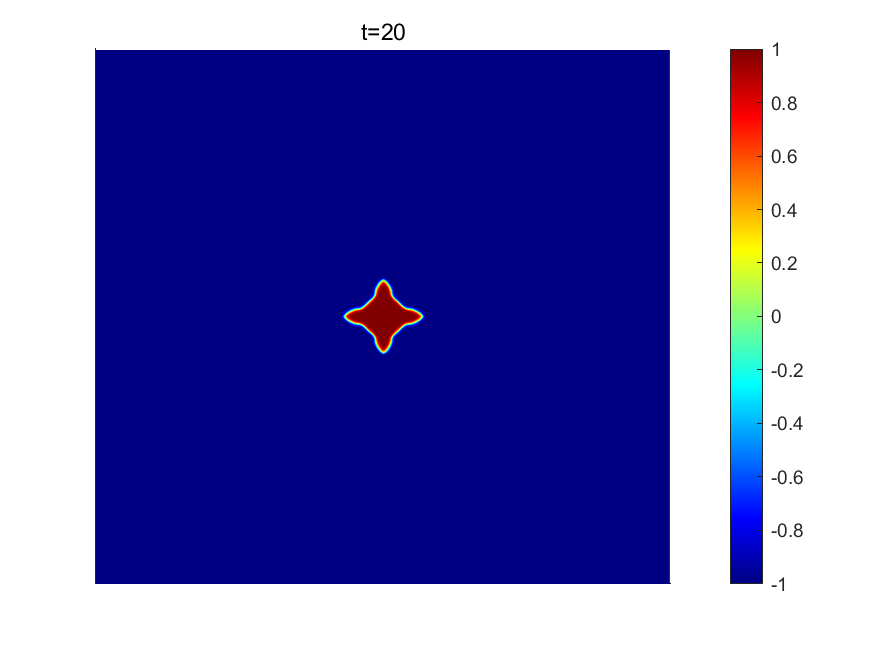}
		\end{minipage}
		\begin{minipage}[t]{0.185\linewidth}
			\centering
			\includegraphics[width=1\linewidth]{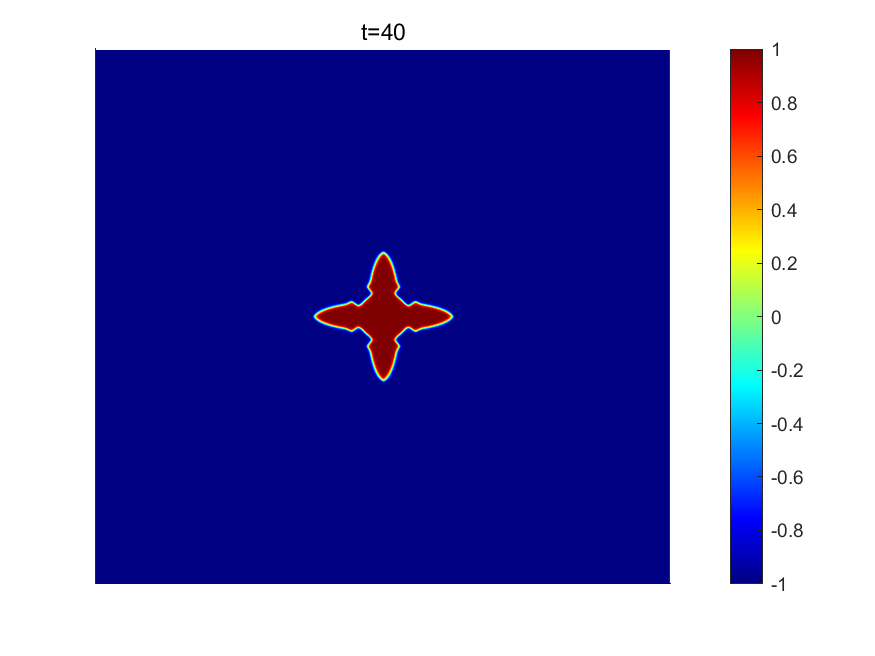}
		\end{minipage}
		\begin{minipage}[t]{0.185\linewidth}
			\centering
			\includegraphics[width=1\linewidth]{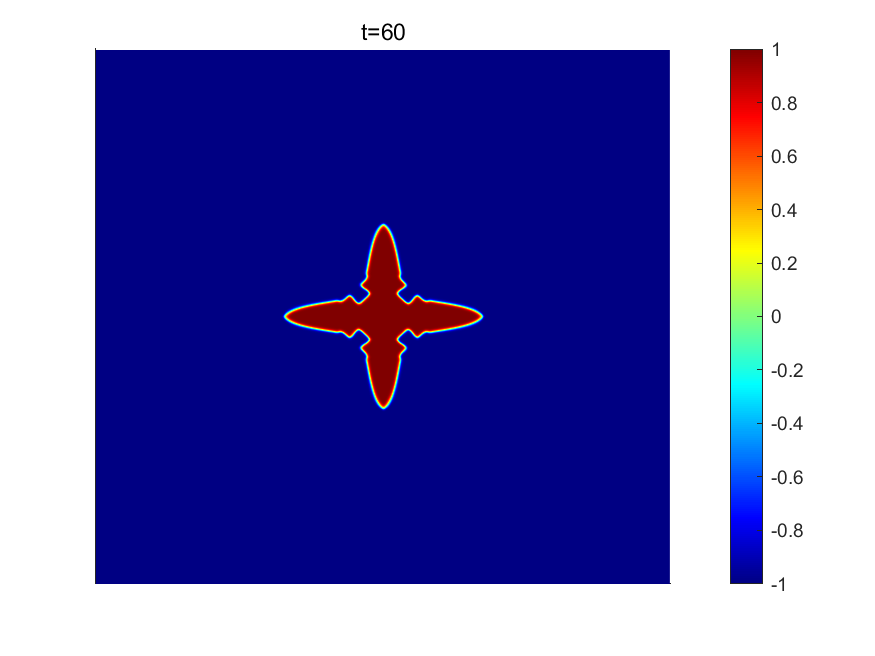}
		\end{minipage}
		\begin{minipage}[t]{0.185\linewidth}
			\centering
			\includegraphics[width=1\linewidth]{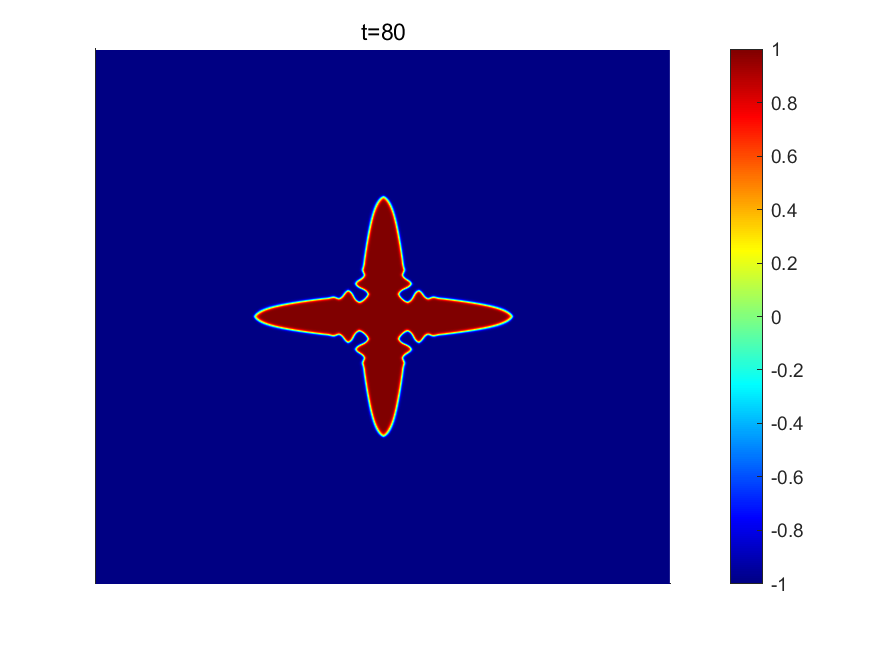}
		\end{minipage}
		\begin{minipage}[t]{0.185\linewidth}
			\centering
			\includegraphics[width=1\linewidth]{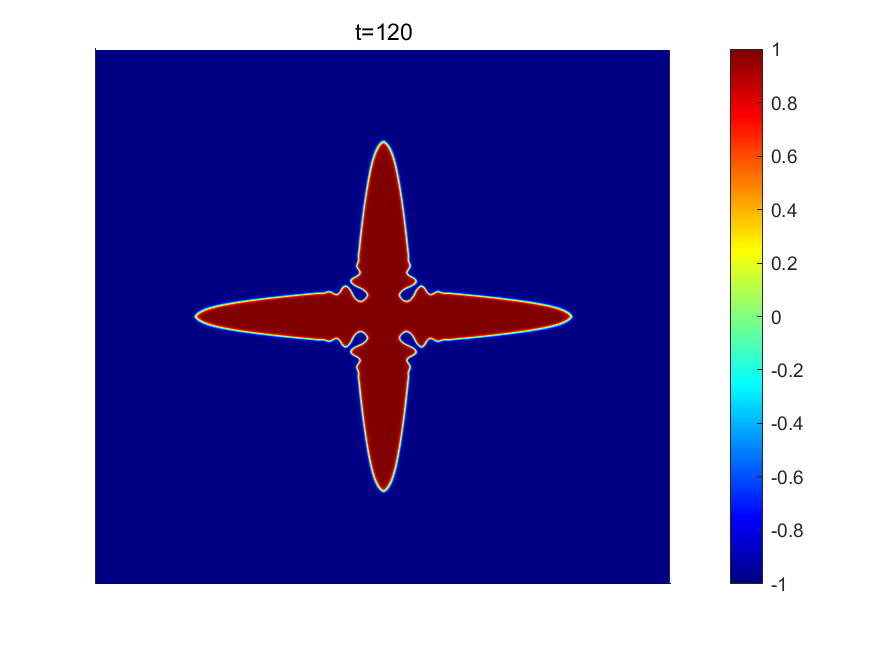}
		\end{minipage}
	}
    \subfigure[\scriptsize (b) Snapshots of the temperature $u$.]{
		\begin{minipage}[t]{0.185\linewidth}
			\centering
			\includegraphics[width=1\linewidth]{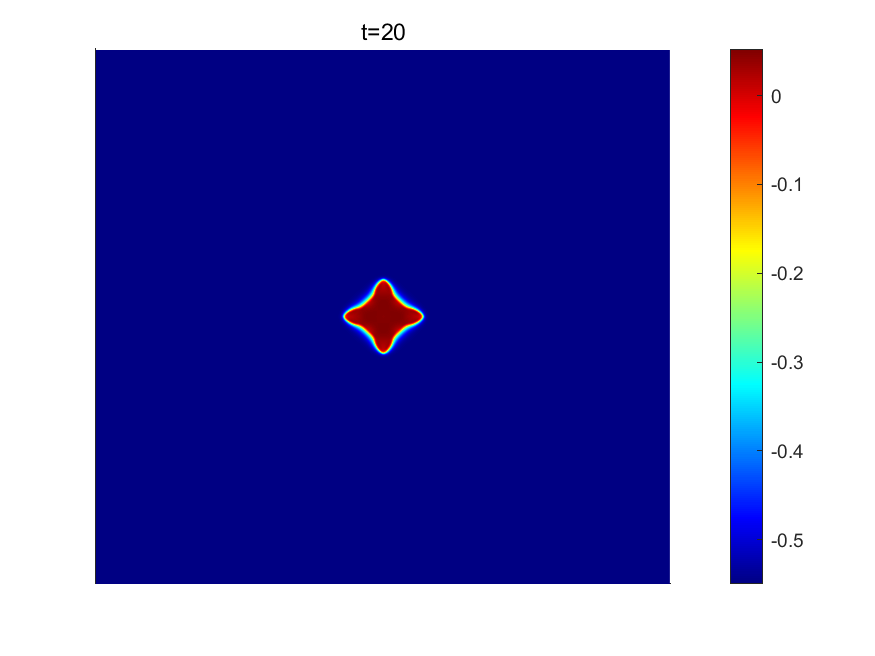}
		\end{minipage}
		\begin{minipage}[t]{0.185\linewidth}
			\centering
			\includegraphics[width=1\linewidth]{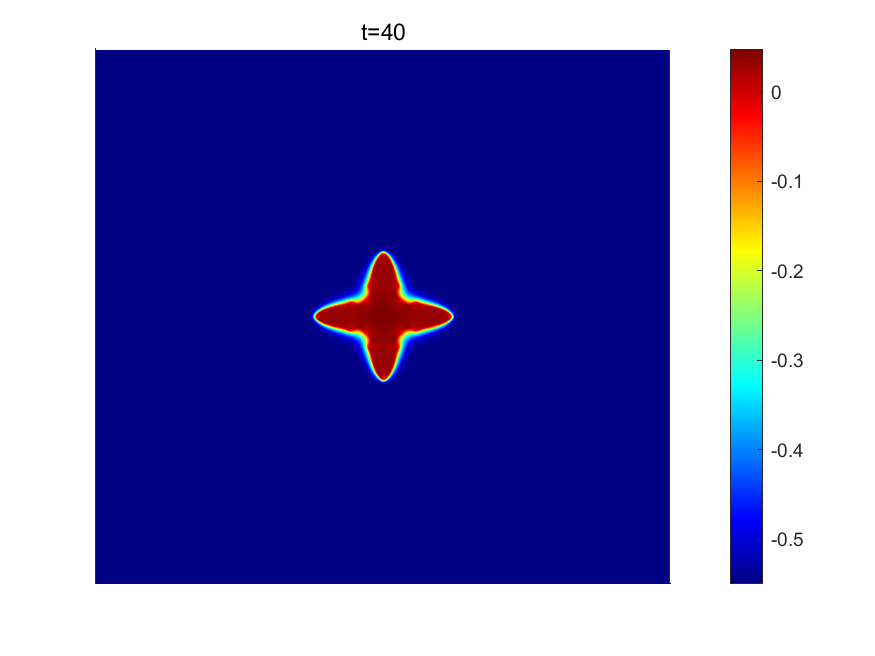}
		\end{minipage}
		\begin{minipage}[t]{0.185\linewidth}
			\centering
			\includegraphics[width=1\linewidth]{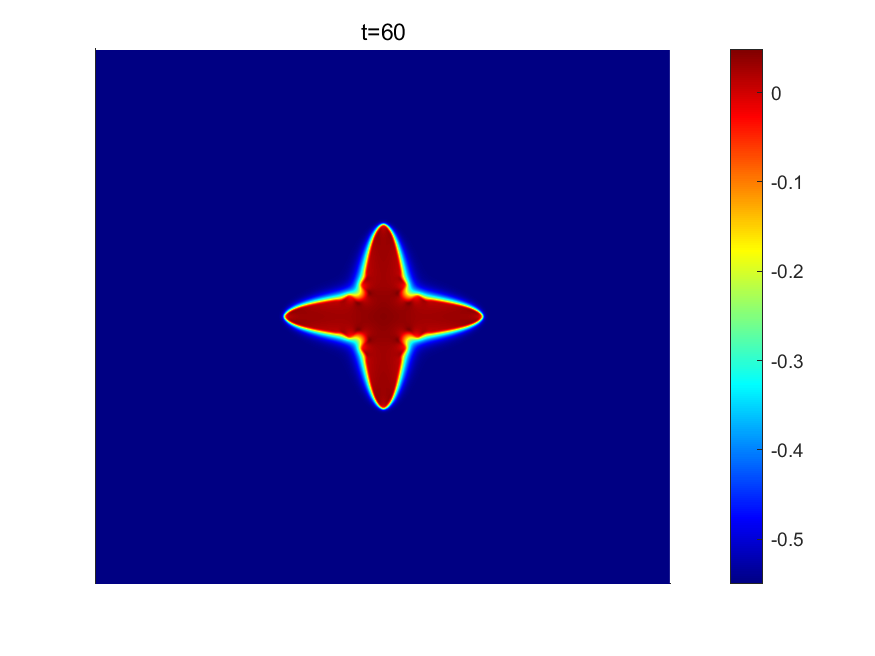}
		\end{minipage}
		\begin{minipage}[t]{0.185\linewidth}
			\centering
			\includegraphics[width=1\linewidth]{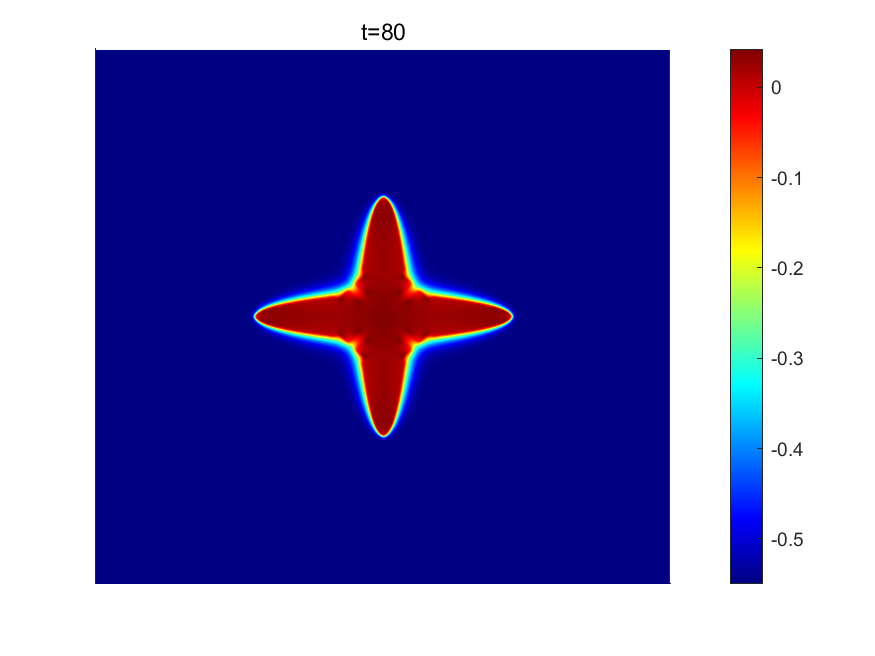}
		\end{minipage}
		\begin{minipage}[t]{0.185\linewidth}
			\centering
			\includegraphics[width=1\linewidth]{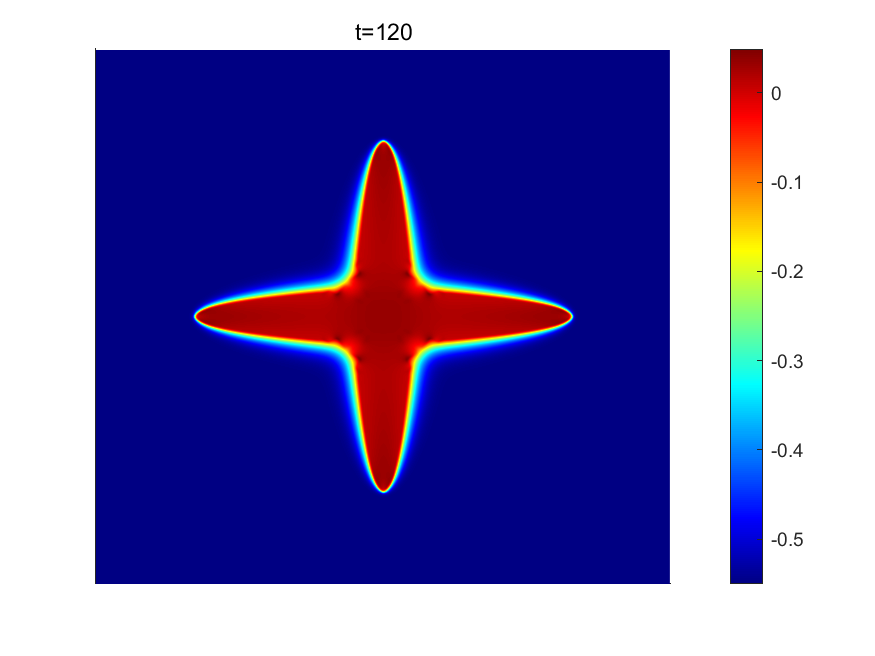}
		\end{minipage}
	}
	\caption{(Example \ref{exp.4}) Dendritic crystal growth in time with fourfold anisotropy $\beta=4$ and the latent heat parameter $K=0.7$.}
	\label{fig:fourfoldK07}
\end{figure}

\begin{figure}[htbp]
	\vspace{-0.35cm}
	\subfigtopskip=2pt
	\subfigbottomskip=2pt
	\subfigcapskip=-5pt
	\centering	
    \subfigure[\scriptsize (a) Snapshots of the phase field $\phi$.]{
		\begin{minipage}[t]{0.185\linewidth}
			\centering
			\includegraphics[width=1\linewidth]{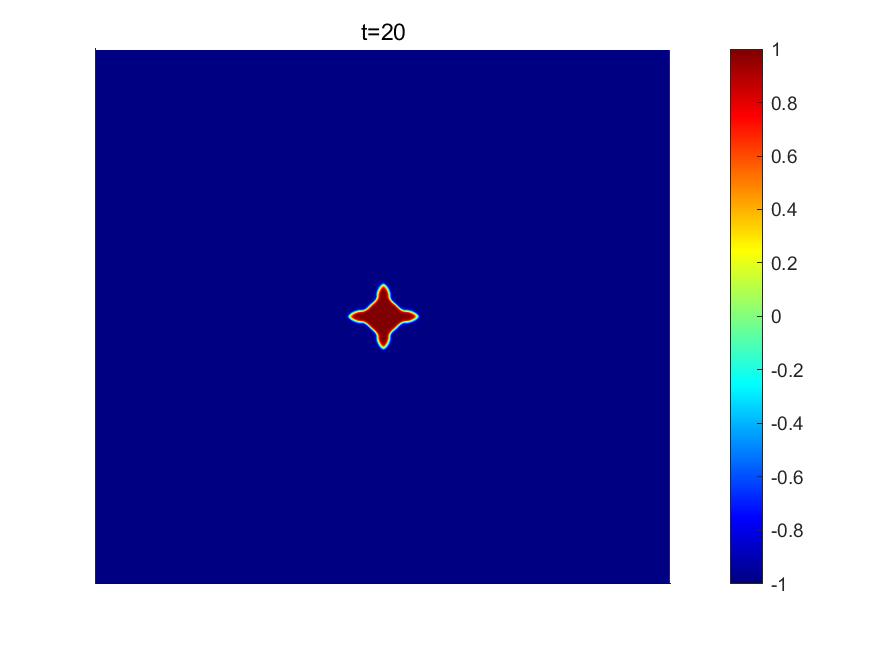}
		\end{minipage}
		\begin{minipage}[t]{0.185\linewidth}
			\centering
			\includegraphics[width=1\linewidth]{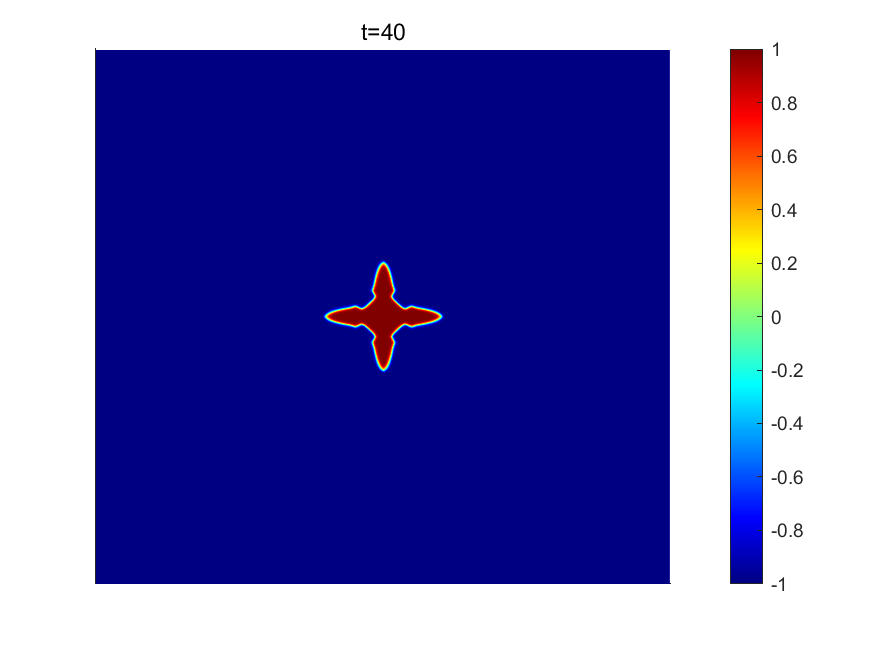}
		\end{minipage}
		\begin{minipage}[t]{0.185\linewidth}
			\centering
			\includegraphics[width=1\linewidth]{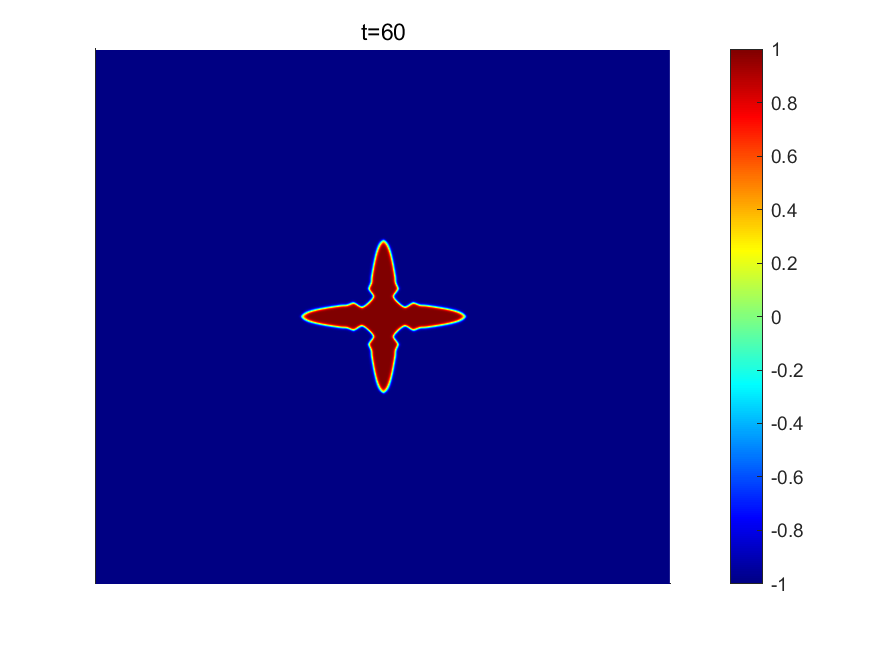}
		\end{minipage}
		\begin{minipage}[t]{0.185\linewidth}
			\centering
			\includegraphics[width=1\linewidth]{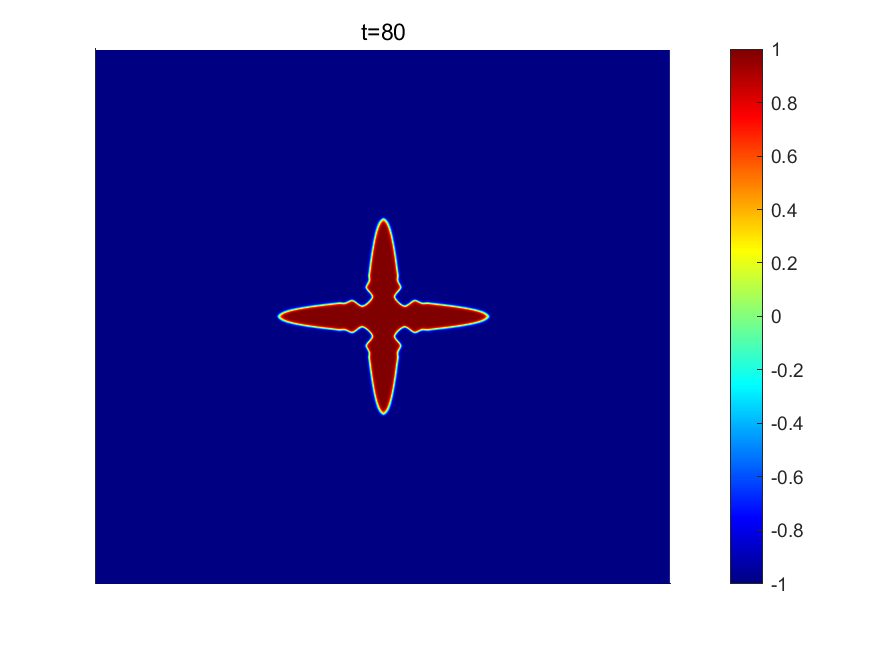}
		\end{minipage}
		\begin{minipage}[t]{0.185\linewidth}
			\centering
			\includegraphics[width=1\linewidth]{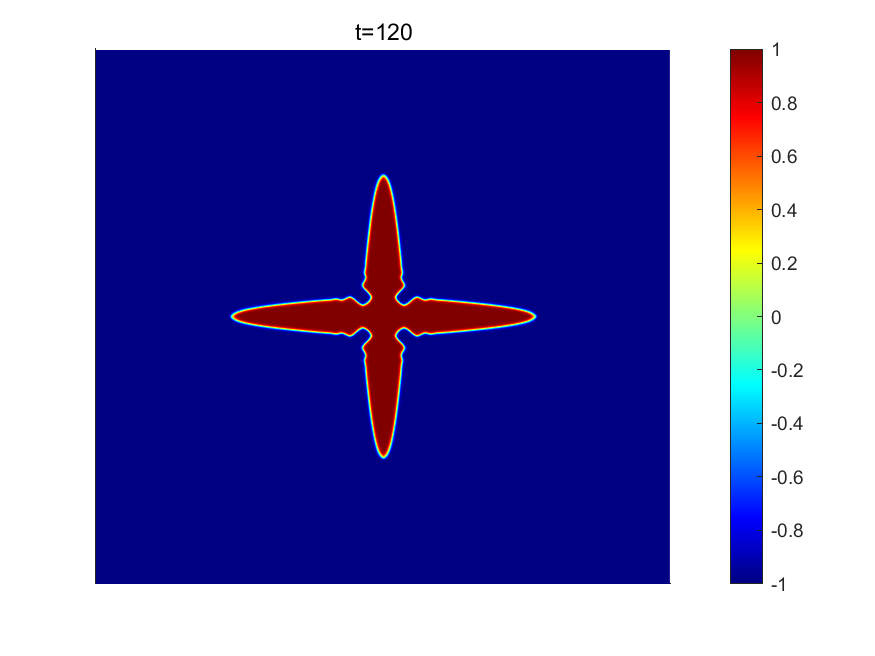}
		\end{minipage}
	}
    \subfigure[\scriptsize (b) Snapshots of the temperature field $u$.]{
		\begin{minipage}[t]{0.185\linewidth}
			\centering
			\includegraphics[width=1\linewidth]{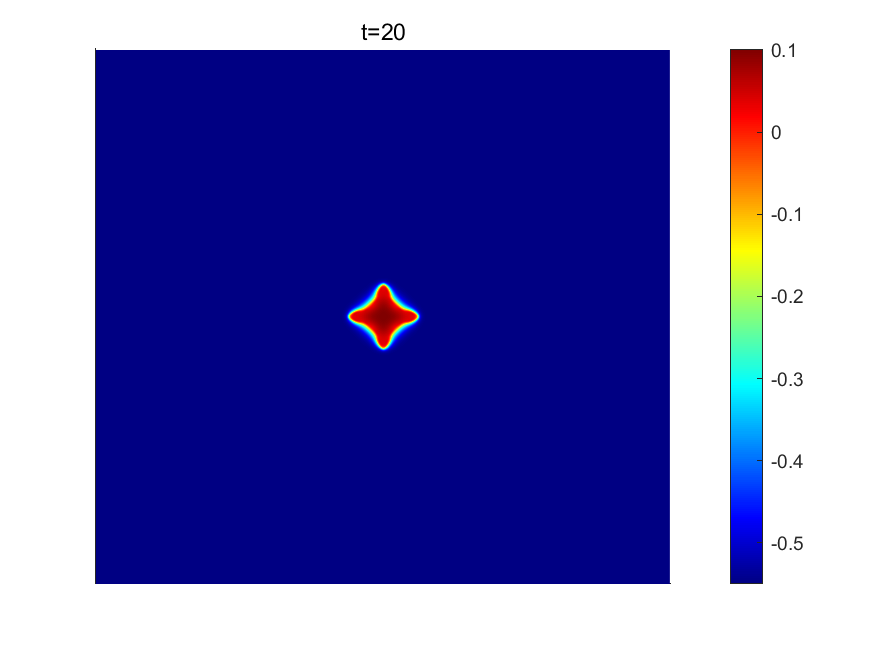}
		\end{minipage}
		\begin{minipage}[t]{0.185\linewidth}
			\centering
			\includegraphics[width=1\linewidth]{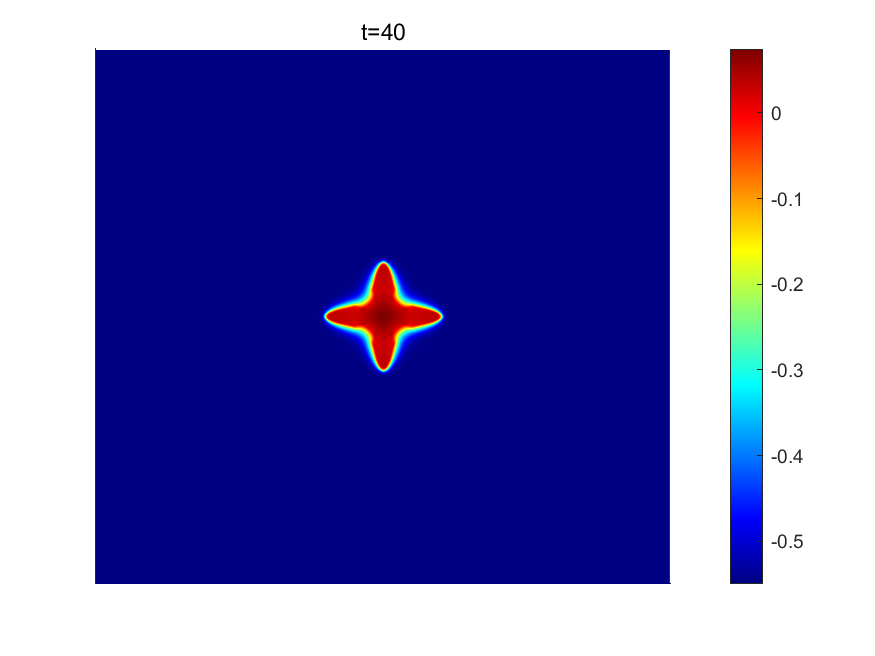}
		\end{minipage}
		\begin{minipage}[t]{0.185\linewidth}
			\centering
			\includegraphics[width=1\linewidth]{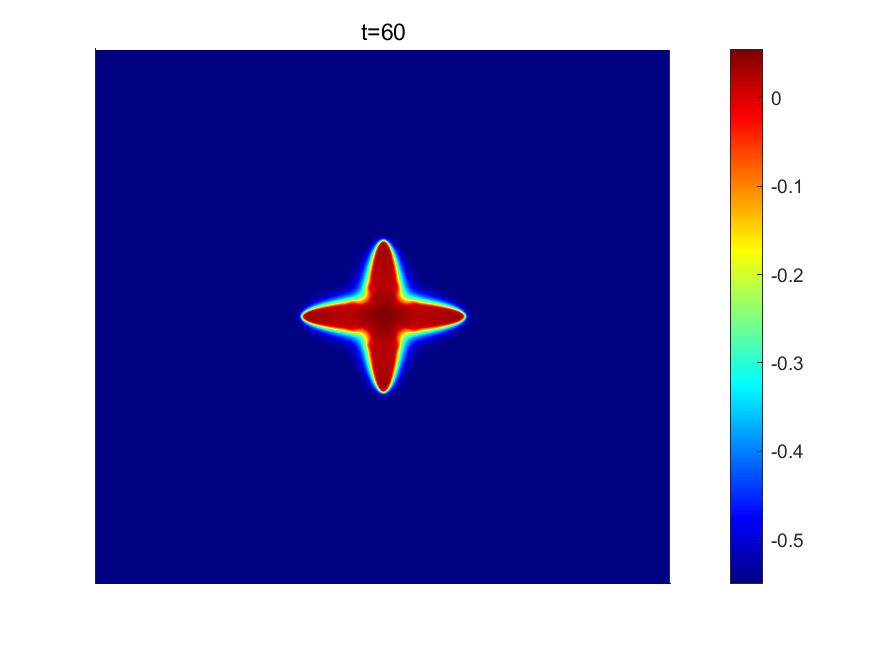}
		\end{minipage}
		\begin{minipage}[t]{0.185\linewidth}
			\centering
			\includegraphics[width=1\linewidth]{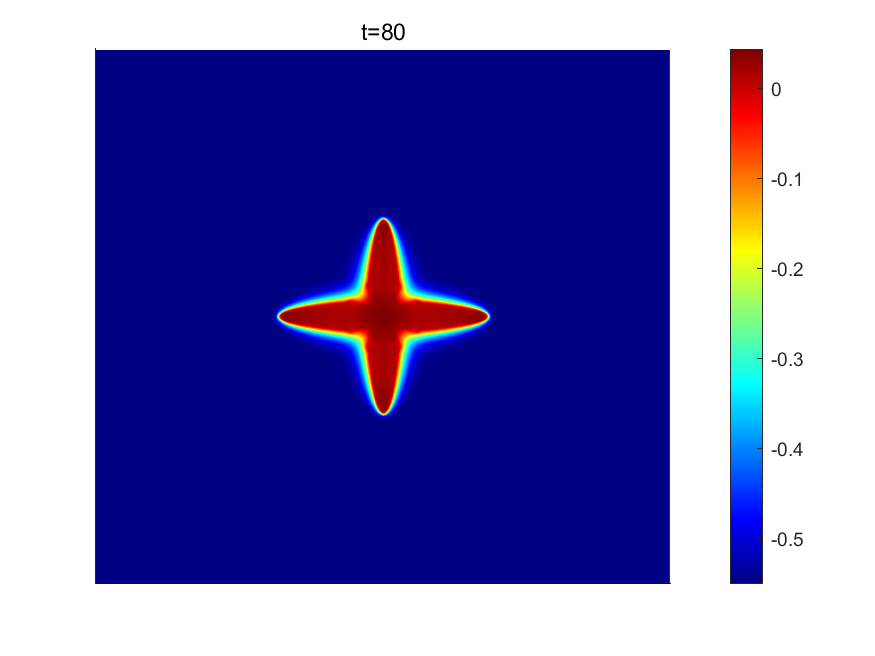}
		\end{minipage}
		\begin{minipage}[t]{0.185\linewidth}
			\centering
			\includegraphics[width=1\linewidth]{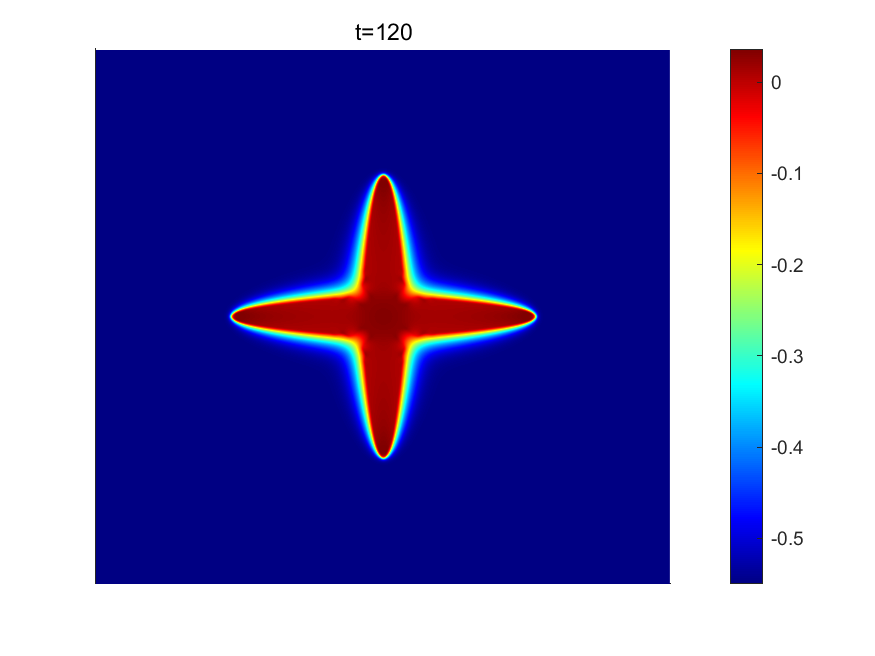}
		\end{minipage}
	}
	\caption{(Example \ref{exp.4}) Dendritic crystal growth in time with fourfold anisotropy $\beta=4$ and the latent heat parameter $K=0.8$.}
	\label{fig:fourfoldK08}
\end{figure}

\begin{figure}[h]
\subfigure[\scriptsize (a) Energy evolution]{
\begin{minipage}{0.45 \textwidth}
\centering
\includegraphics[width=\textwidth]{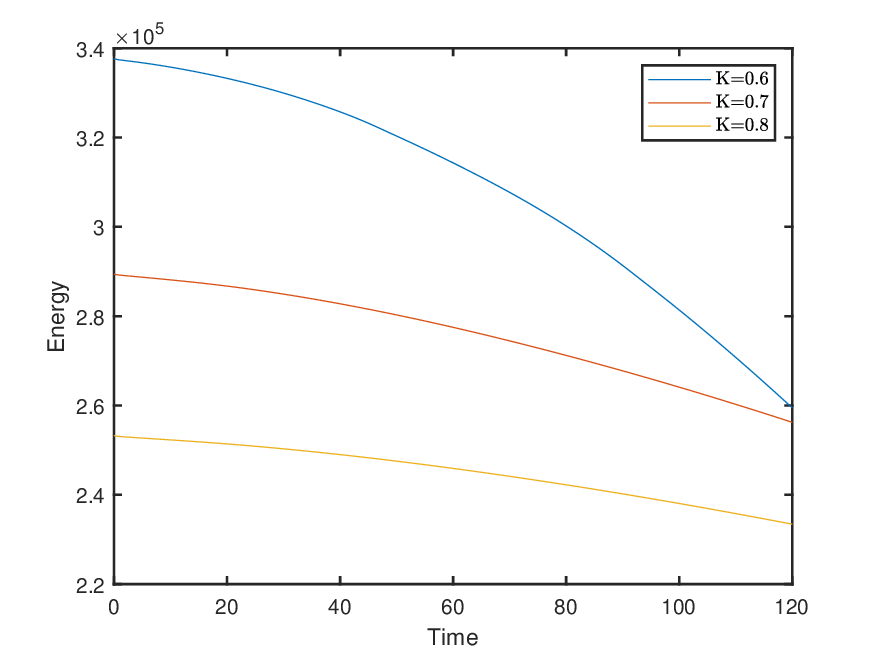}
\end{minipage}
}
\subfigure[\scriptsize (b) Crystal area]{
\begin{minipage}{0.45 \textwidth}
\centering
\includegraphics[width=\textwidth]{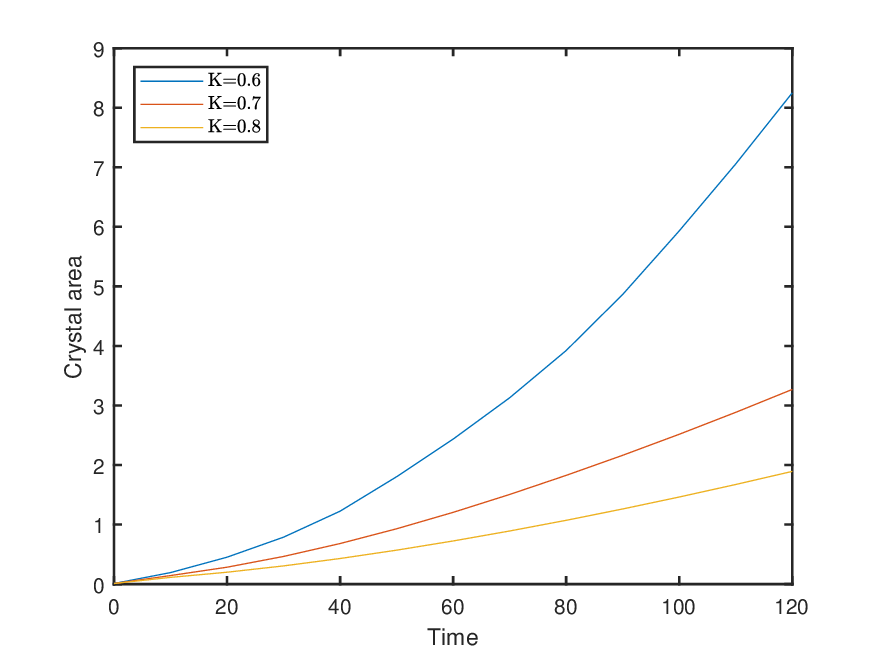}
\end{minipage}
}
%
\caption{(Example \ref{exp.4}) (a) Time evolution of the energy; (b) crystal
area $\int_{\Omega}\frac{1+\phi}{2}dx$.
}
\label{fourfold_energy}
\end{figure}

 \begin{example}\label{exp.6}
\rm{(Three nuclei fourfold anisotropy crystal growth) We consider the following initial conditions}
\begin{equation*}
\left\{
             \begin{array}{ll}
\phi(x,y,0)=\sum\limits_{i=1}^{3}\tanh\Big(\frac{0.02-\sqrt{(x-x_i)^2+(y-y_i)^2}}{0.072}\Big)+2,\\
u(x,y,0)=\left\{
             \begin{array}{ll}
             0,  &\phi>0,\\
             -0.55, &otherwise,
                          \end{array}
           \right.
             \end{array}
           \right.
\end{equation*}
where $(x_1,y_1)=(\frac{4}{5}\pi,\frac{13}{10}\pi), (x_2,y_2)=(\frac{3}{4}\pi,\frac{3}{4}\pi)$, and $(x_3,y_3)=(\frac{13}{10}\pi,\pi)$.
The parameters are set as before.
\end{example}
This configuration is used to simulate the growth of three tiny nuclei placed in the square domain.
Figure \ref{fig:three-fourfoldK06} shows the isocontours of the phase field and the
temperature for $K=0.6$.
It is observed that three main dendrites with a lot of extruded dendrites
are formed during the formation process.
This is consistent with the findings that already exist \cite{ohno2020quantitative}.

\begin{figure}[htbp]
	\vspace{-0.35cm}
	\subfigtopskip=2pt
	\subfigbottomskip=2pt
	\subfigcapskip=-5pt
	\centering	
    \subfigure[\scriptsize (a) Snapshots of the phase field $\phi$.]{
		\begin{minipage}[t]{0.185\linewidth}
			\centering
			\includegraphics[width=1\linewidth]{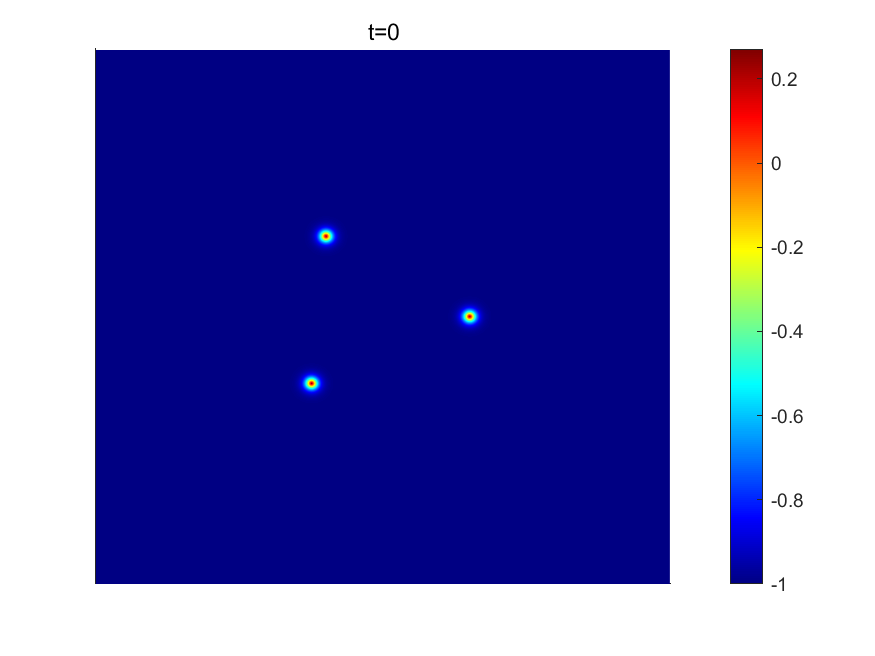}
		\end{minipage}
		\begin{minipage}[t]{0.185\linewidth}
			\centering
			\includegraphics[width=1\linewidth]{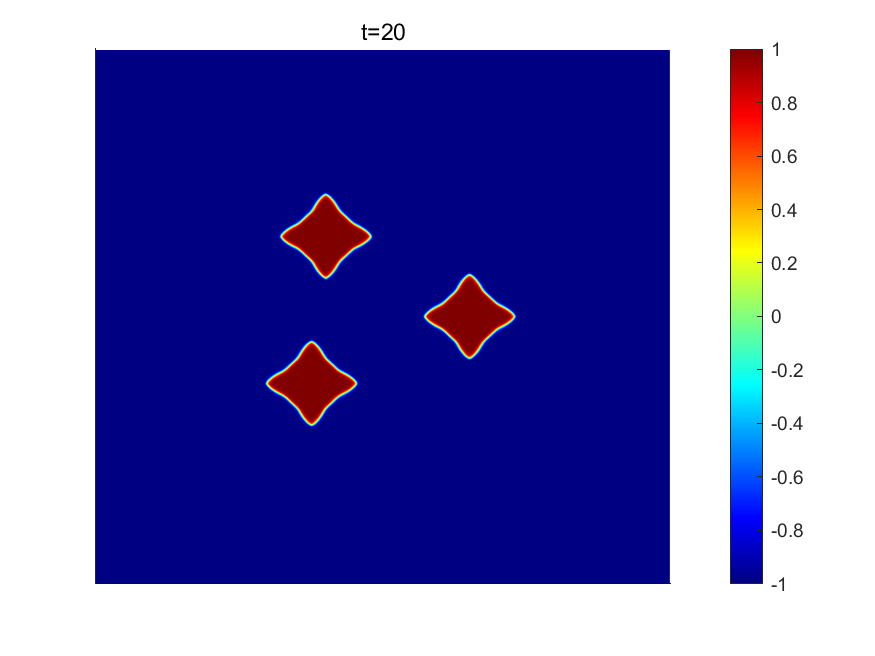}
		\end{minipage}
		\begin{minipage}[t]{0.185\linewidth}
			\centering
			\includegraphics[width=1\linewidth]{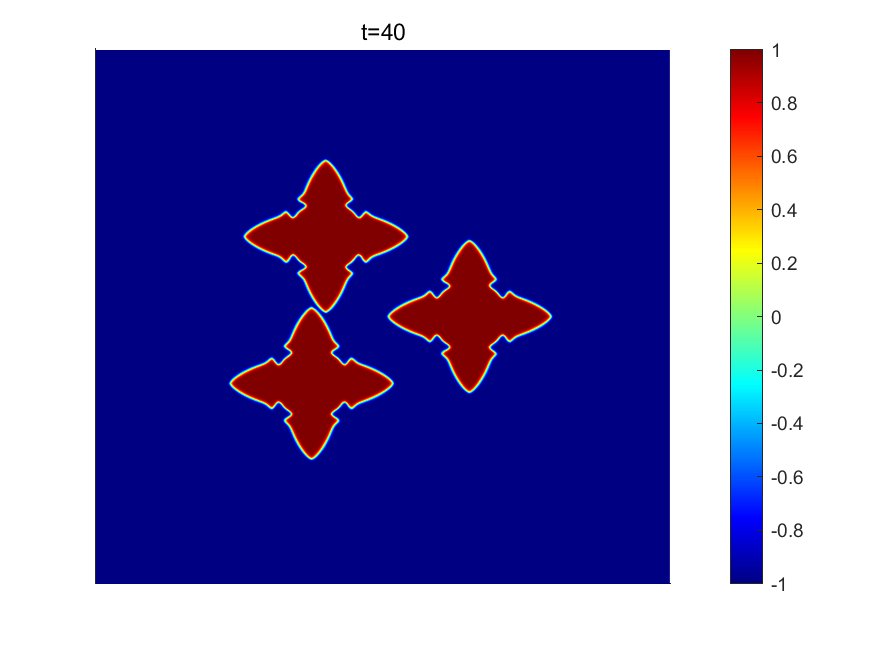}
		\end{minipage}
		\begin{minipage}[t]{0.185\linewidth}
			\centering
			\includegraphics[width=1\linewidth]{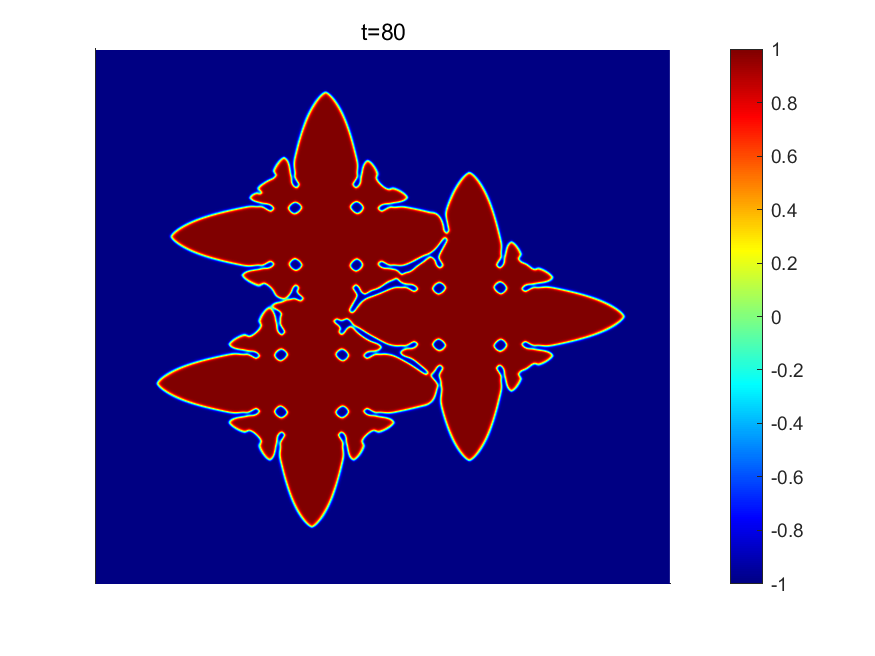}
		\end{minipage}
		\begin{minipage}[t]{0.185\linewidth}
			\centering
			\includegraphics[width=1\linewidth]{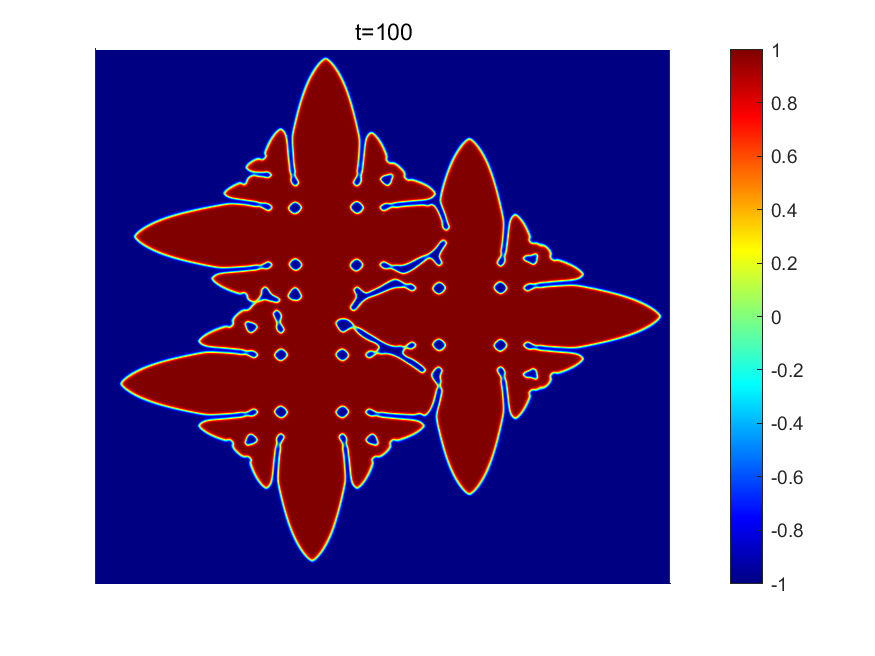}
		\end{minipage}
	}
    \subfigure[\scriptsize (b) Snapshots of the temperature field $u$.]{

		\begin{minipage}[t]{0.185\linewidth}
			\centering
			\includegraphics[width=1\linewidth]{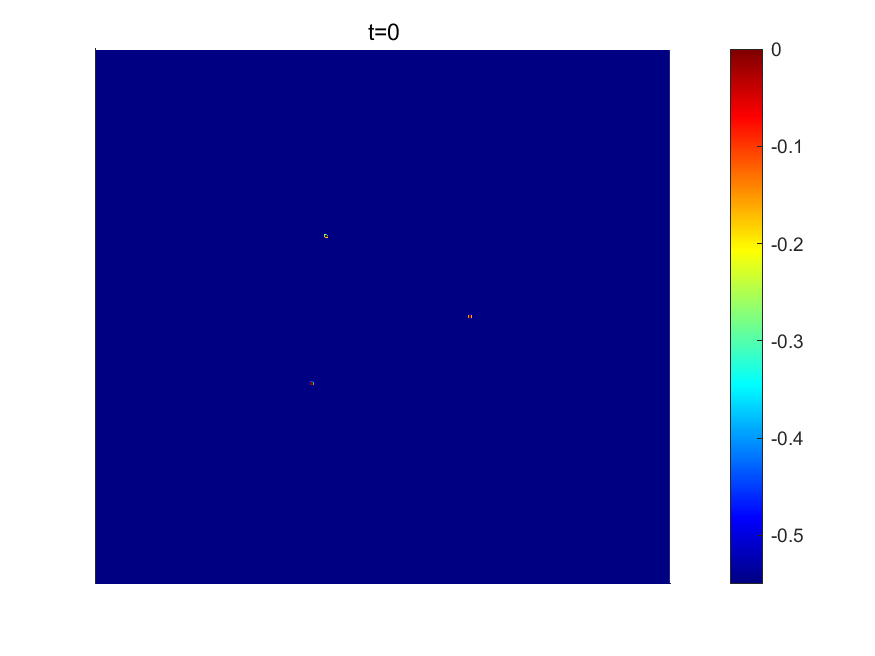}
		\end{minipage}
		\begin{minipage}[t]{0.185\linewidth}
			\centering
			\includegraphics[width=1\linewidth]{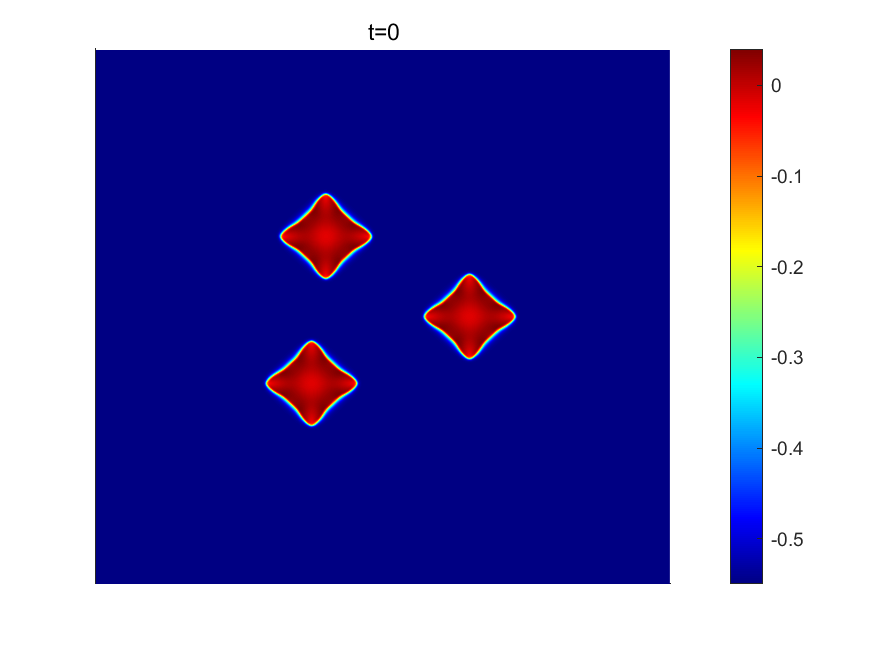}
		\end{minipage}
		\begin{minipage}[t]{0.185\linewidth}
			\centering
			\includegraphics[width=1\linewidth]{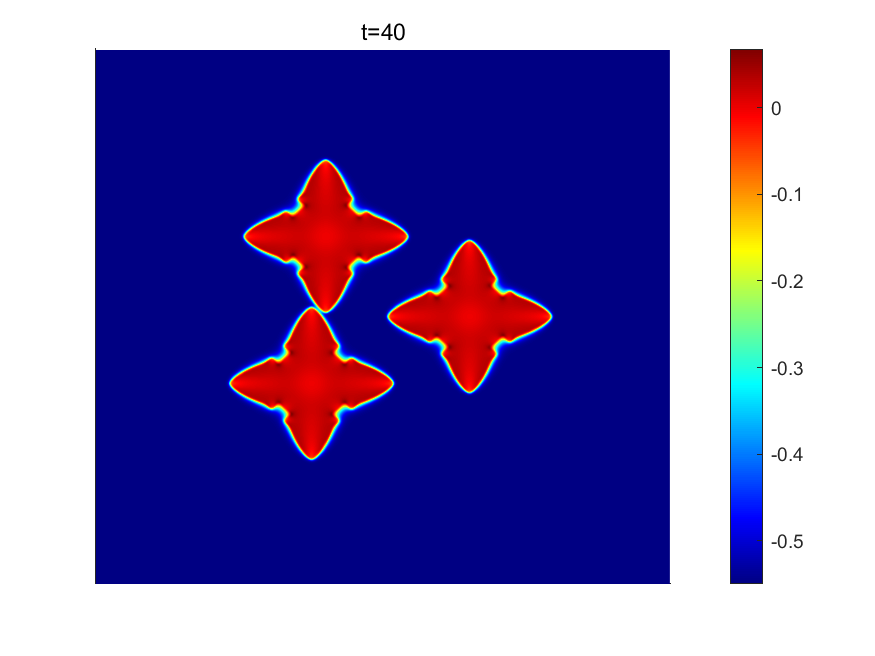}
		\end{minipage}
		\begin{minipage}[t]{0.185\linewidth}
			\centering
			\includegraphics[width=1\linewidth]{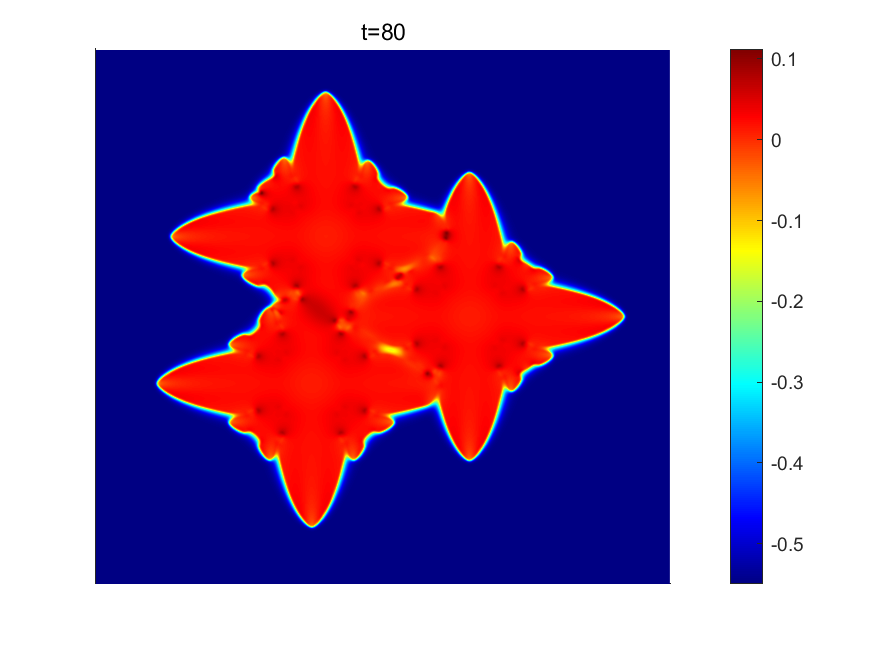}
		\end{minipage}
		\begin{minipage}[t]{0.185\linewidth}
			\centering
			\includegraphics[width=1\linewidth]{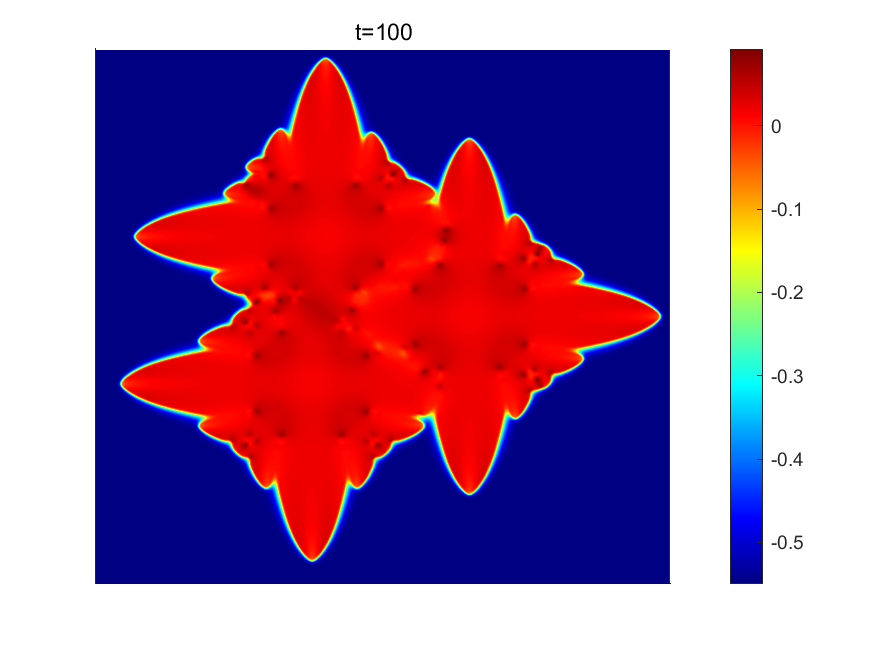}
		\end{minipage}
	}
	\caption{(Example \ref{exp.6}) Fourfold anisotropy dendritic crystal growth starting with three deposited nuclei.}
	\label{fig:three-fourfoldK06}
\end{figure}


\subsection{Sixfold dendrite crystal growth in 2D}
Sixfold dendrite crystal growth is simulated by setting $\beta=6$ with
other parameters taken same as Example \ref{exp.4}.

We present in Figures \ref{fig:sixfoldK06}-\ref{fig:sixfoldK07} the isocontours of
$\{\phi = 0\}$ and the temperature $u$ for $K=0.6,0.65,0.7$.
We observe that the small circle at the initial time first evolves
into a hexagon shape, then into a snowflake with six branches.
There are microscopic structures that emerge in the growth process
due to the anisotropy in the heat transfer.
Comparing the crystal shapes in Figure \ref{fig:sixfoldK06}, Figure \ref{fig:sixfoldK065},  and Figure \ref{fig:sixfoldK07} for different $K$, we see
that the bigger is $K$, the sharper are snow-shaped tips, the thinner are branches,
and the less subtle are micro structures.
The evolution of the energy and the crystal area are
plotted in Figure \ref{sixfold_energy}, showing the energy dissipation and area increasing in time and in decreasing $K$.
Again the simulation results are consistent with the references \cite{kobayashi1993modeling,karma1998quantitative,yang2020efficient}.

Sixfold dendrite crystal growth with three deposited nuclei is also simulated by
using the same initial conditions as Example \ref{exp.6}.
The result is given in Figure \ref{fig:three-sixfoldK06}.
We observe that three dendrites are eventually created during the formation process, each containing a plentiful amount of compressed branches. We notice that some similar simulation result has been reported in \cite{yang2020efficient}.

\begin{figure}[htbp]
	\vspace{-0.35cm}
	\subfigtopskip=2pt
	\subfigbottomskip=2pt
	\subfigcapskip=-5pt
	\centering	
    \subfigure[\scriptsize (a) Snapshots of the phase field $\phi$.]{

		\begin{minipage}[t]{0.185\linewidth}
			\centering
			\includegraphics[width=1\linewidth]{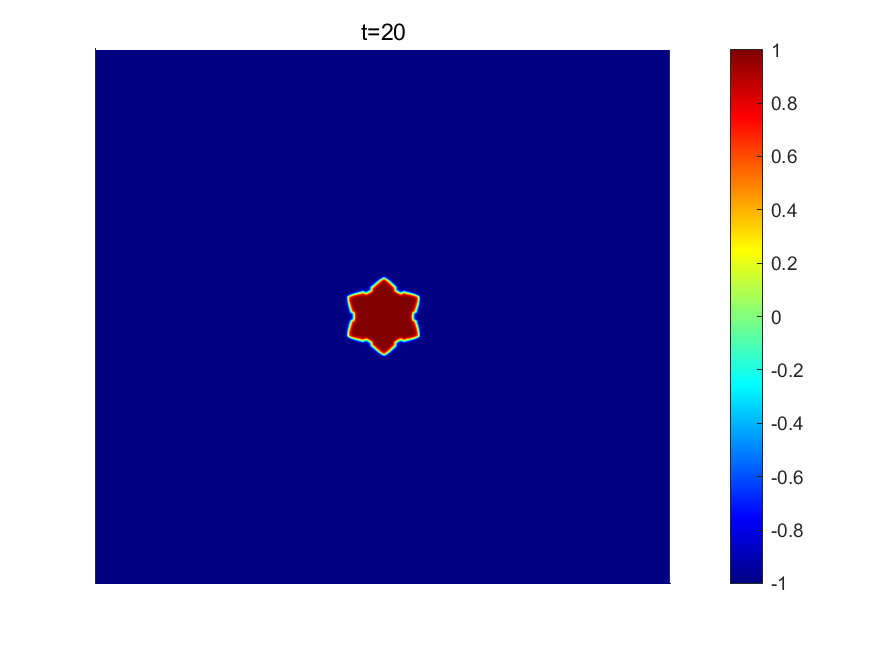}
		\end{minipage}
		\begin{minipage}[t]{0.185\linewidth}
			\centering
			\includegraphics[width=1\linewidth]{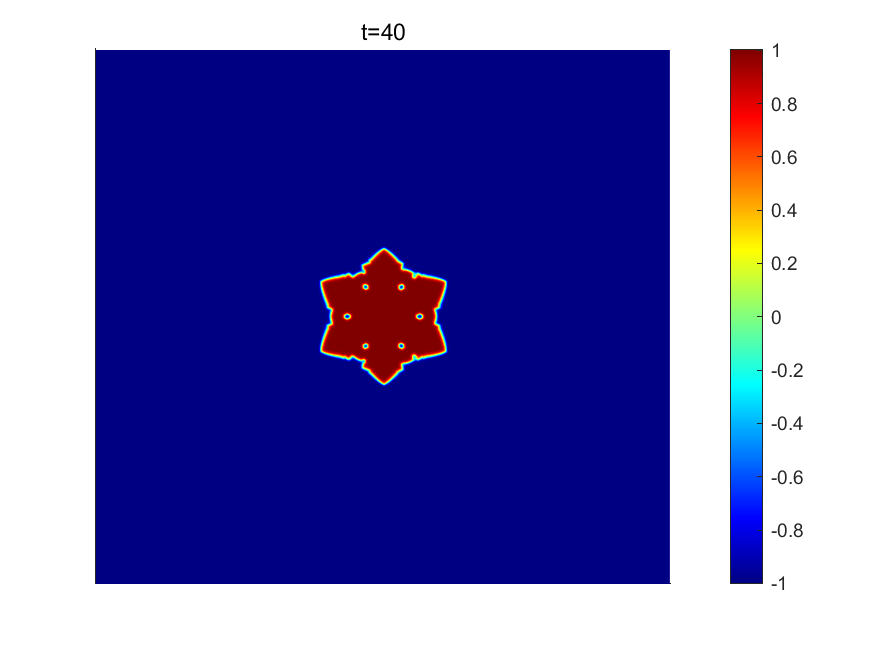}
		\end{minipage}
		\begin{minipage}[t]{0.185\linewidth}
			\centering
			\includegraphics[width=1\linewidth]{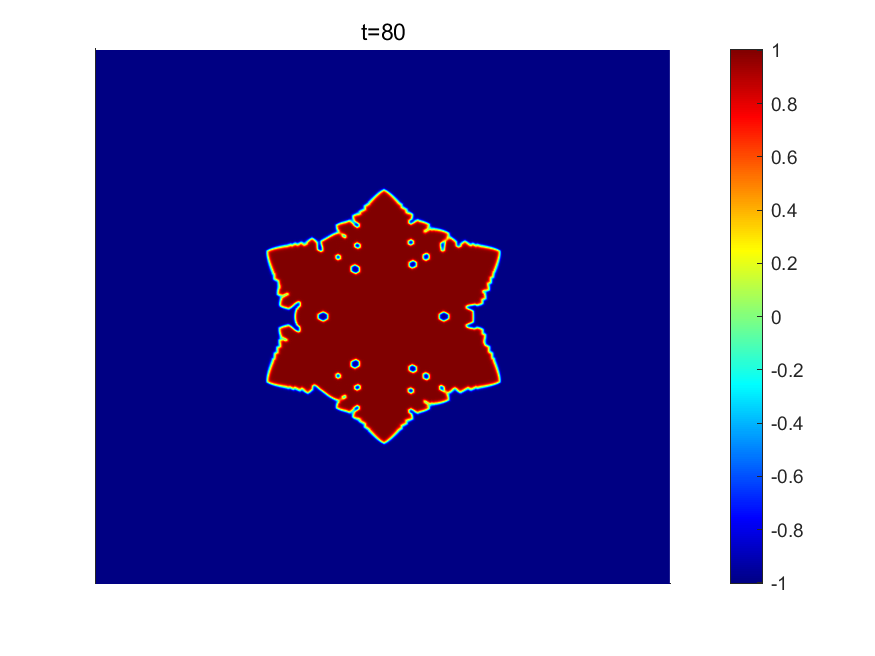}
		\end{minipage}
		\begin{minipage}[t]{0.185\linewidth}
			\centering
			\includegraphics[width=1\linewidth]{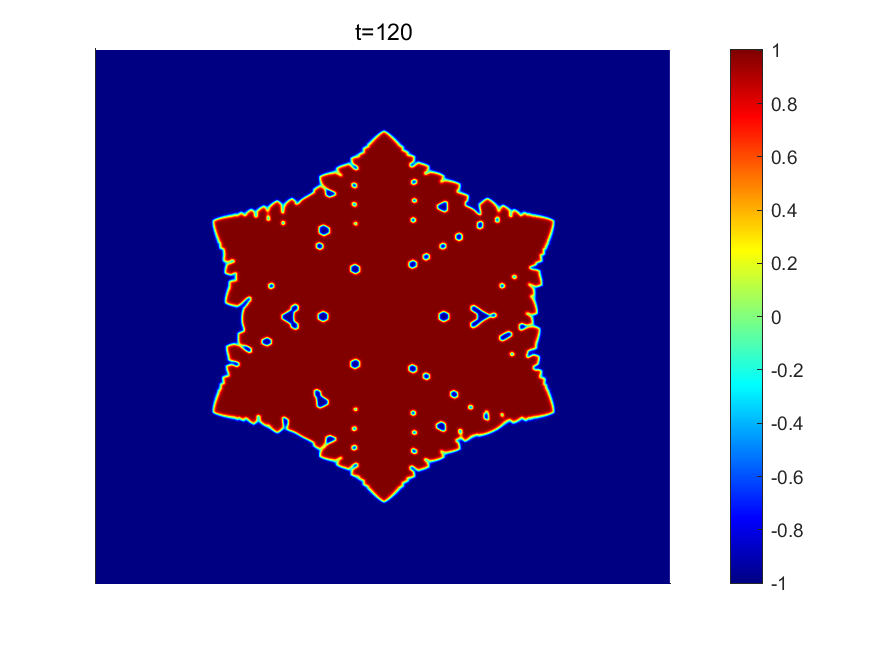}
		\end{minipage}
		\begin{minipage}[t]{0.185\linewidth}
			\centering
			\includegraphics[width=1\linewidth]{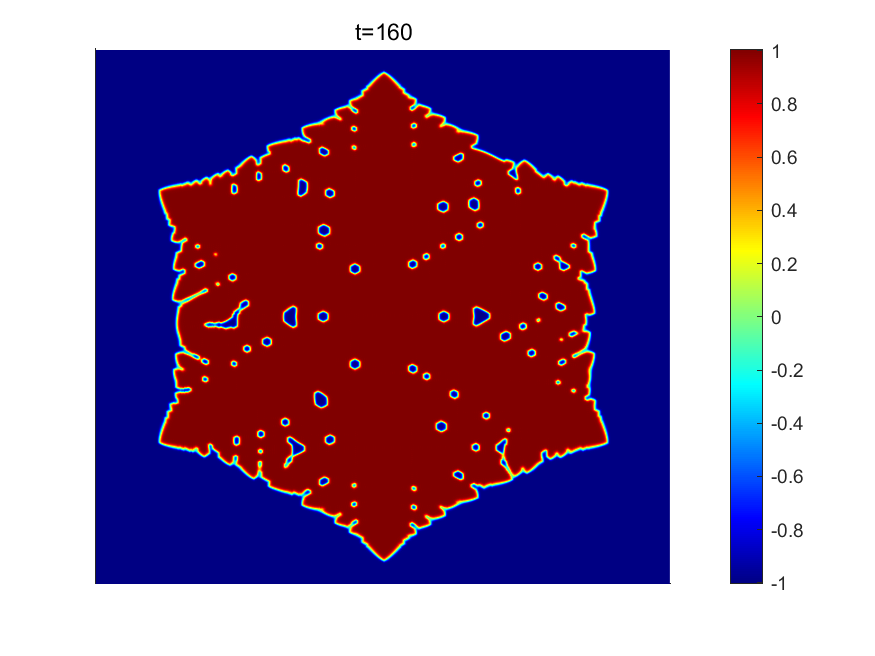}
		\end{minipage}
	}
    \subfigure[\scriptsize (b) Snapshots of the temperature field $u$.]{
		\begin{minipage}[t]{0.185\linewidth}
			\centering
			\includegraphics[width=1\linewidth]{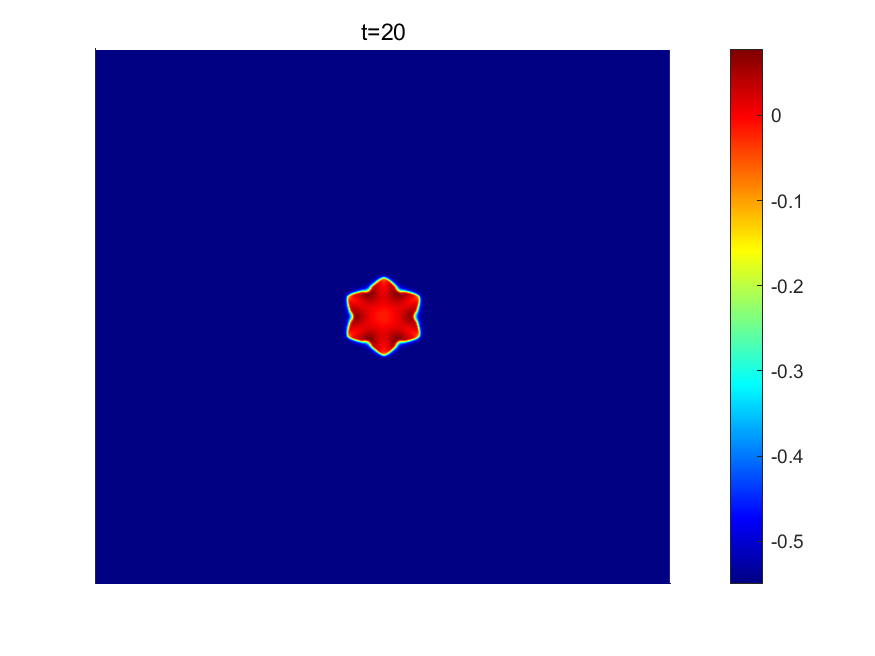}
		\end{minipage}
		\begin{minipage}[t]{0.185\linewidth}
			\centering
			\includegraphics[width=1\linewidth]{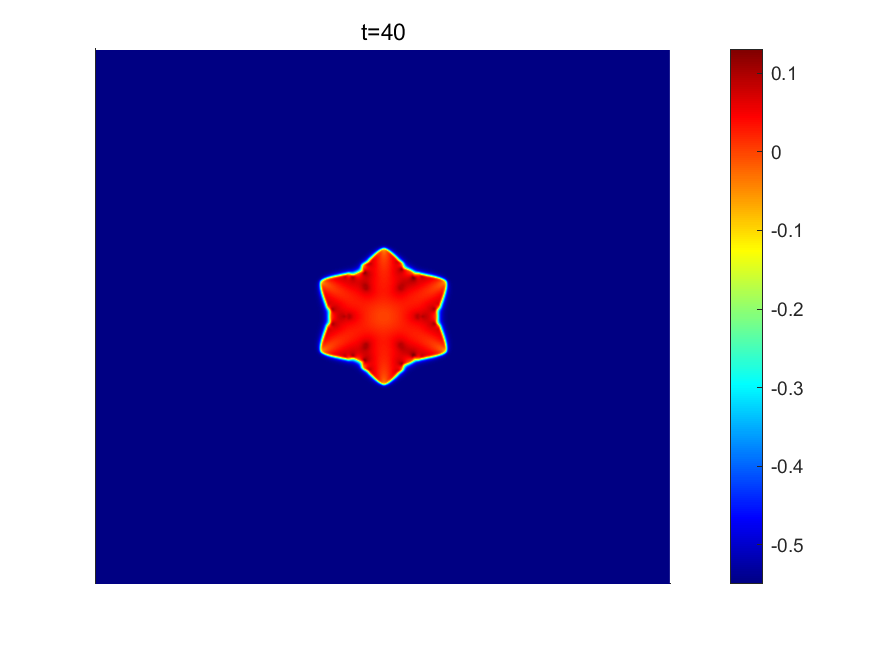}
		\end{minipage}
		\begin{minipage}[t]{0.185\linewidth}
			\centering
			\includegraphics[width=1\linewidth]{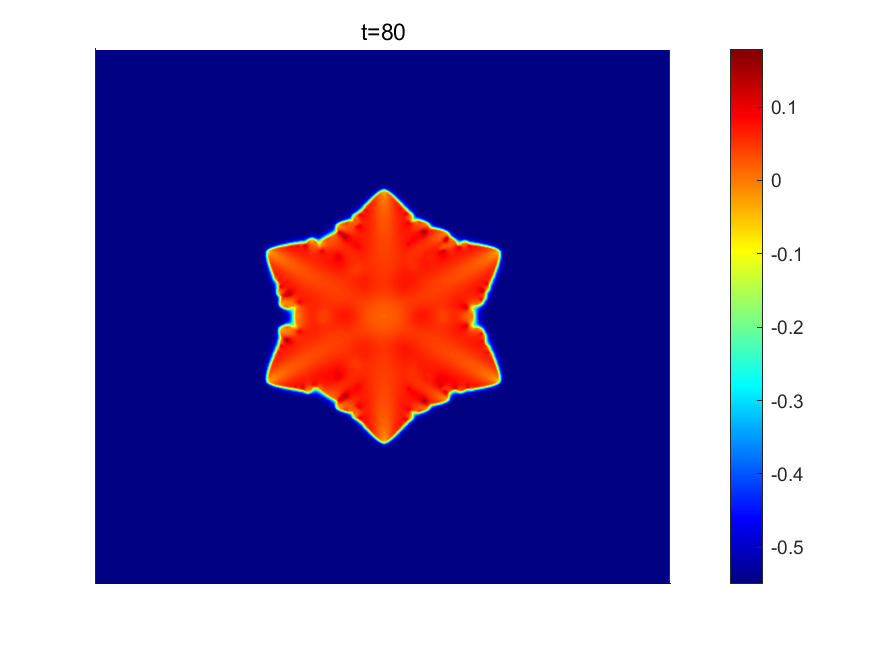}
		\end{minipage}
		\begin{minipage}[t]{0.185\linewidth}
			\centering
			\includegraphics[width=1\linewidth]{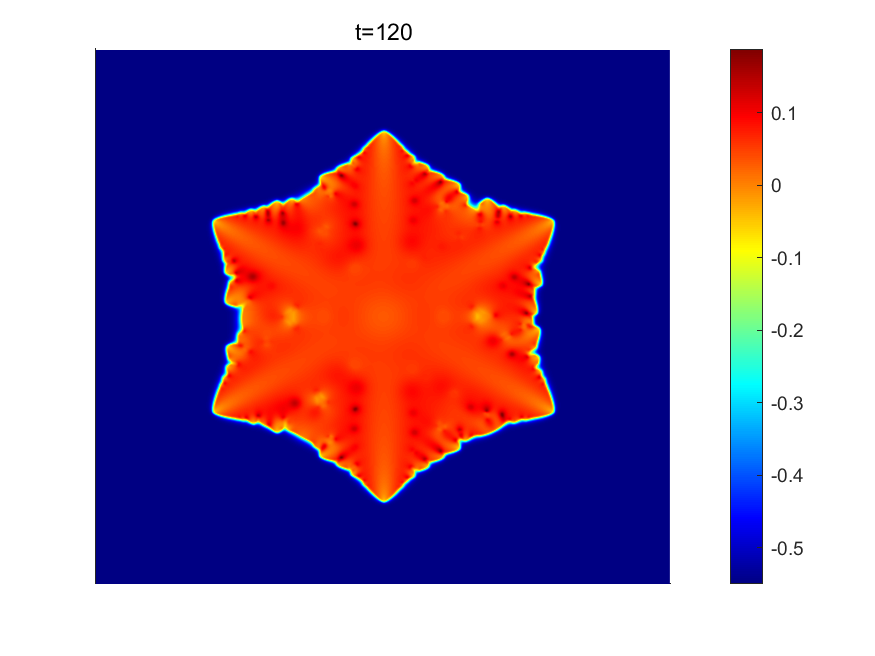}
		\end{minipage}
		\begin{minipage}[t]{0.185\linewidth}
			\centering
			\includegraphics[width=1\linewidth]{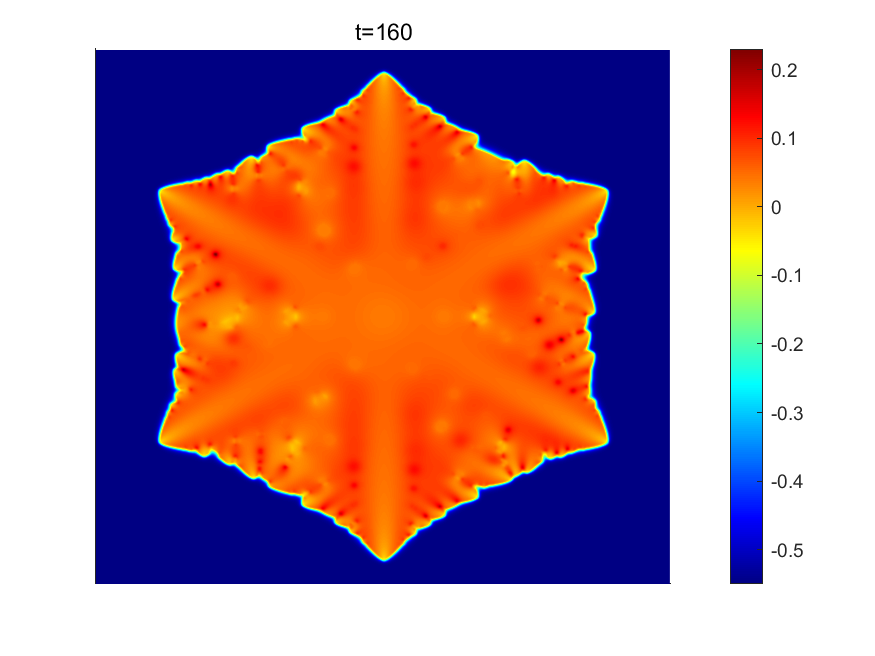}
		\end{minipage}
	}
	\caption{Sixfold dendritic crystal growth for $K=0.6$.}
	\label{fig:sixfoldK06}
\end{figure}

\begin{figure}[htbp]
	\vspace{-0.35cm}
	\subfigtopskip=2pt
	\subfigbottomskip=2pt
	\subfigcapskip=-5pt
	\centering	
    \subfigure[\scriptsize (a) Snapshots of the phase field $\phi$.]{
		\begin{minipage}[t]{0.185\linewidth}
			\centering
			\includegraphics[width=1\linewidth]{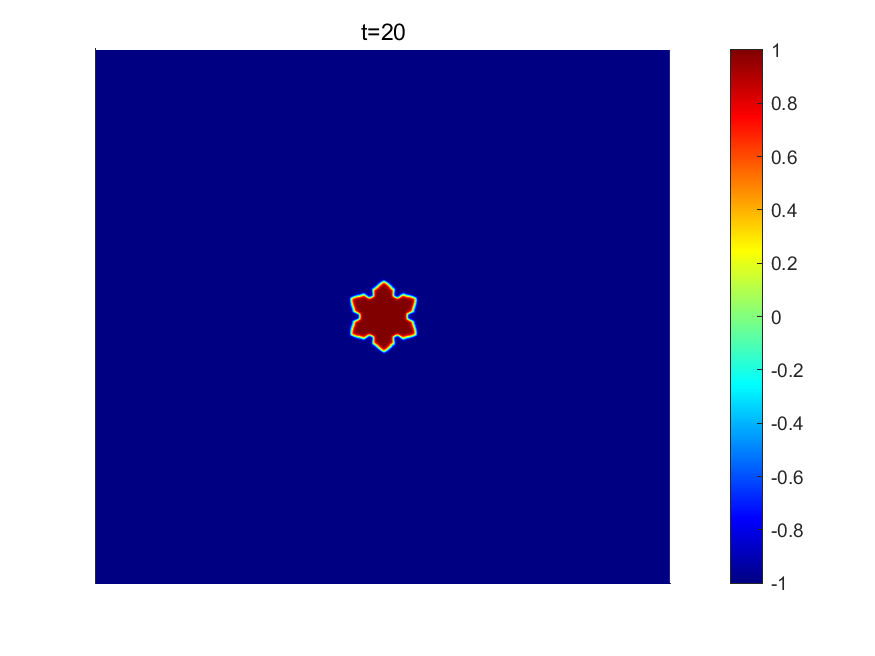}
		\end{minipage}
		\begin{minipage}[t]{0.185\linewidth}
			\centering
			\includegraphics[width=1\linewidth]{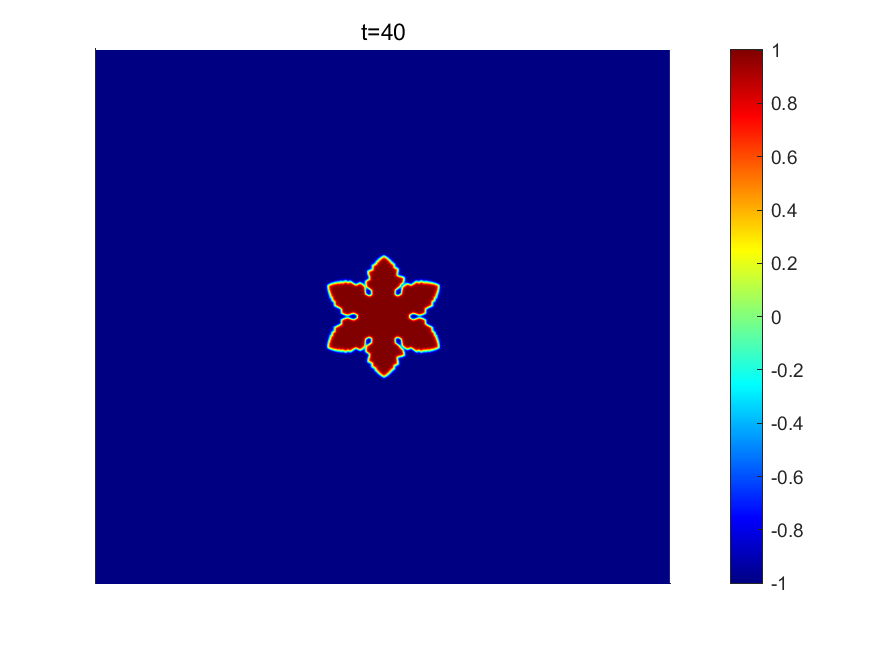}
		\end{minipage}
		\begin{minipage}[t]{0.185\linewidth}
			\centering
			\includegraphics[width=1\linewidth]{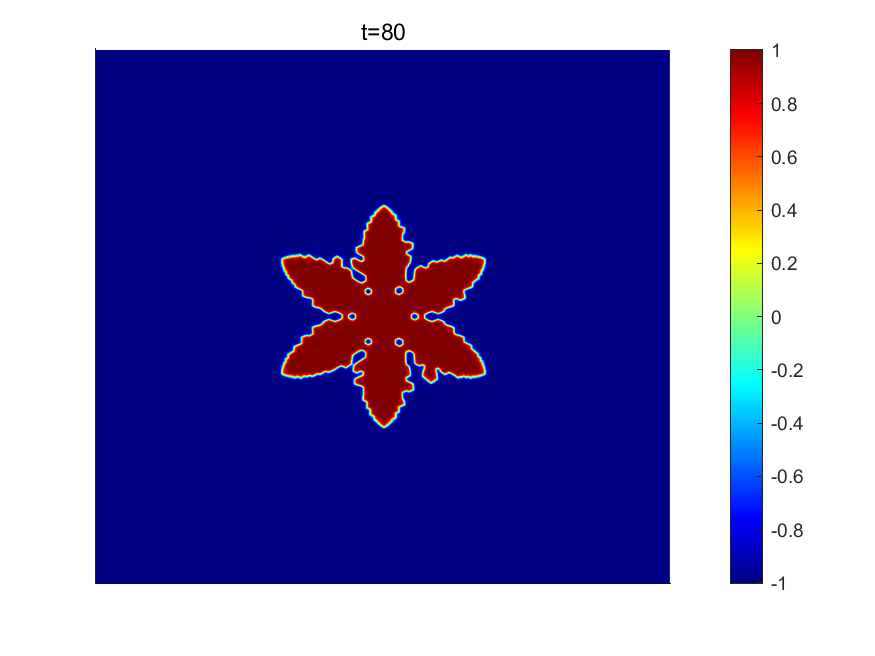}
		\end{minipage}
		\begin{minipage}[t]{0.185\linewidth}
			\centering
			\includegraphics[width=1\linewidth]{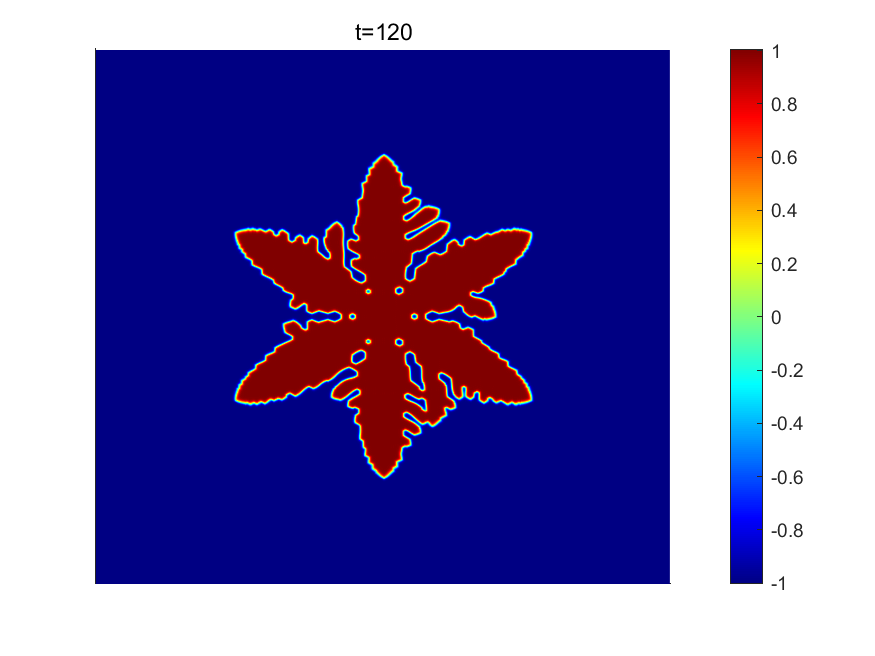}
		\end{minipage}
		\begin{minipage}[t]{0.185\linewidth}
			\centering
			\includegraphics[width=1\linewidth]{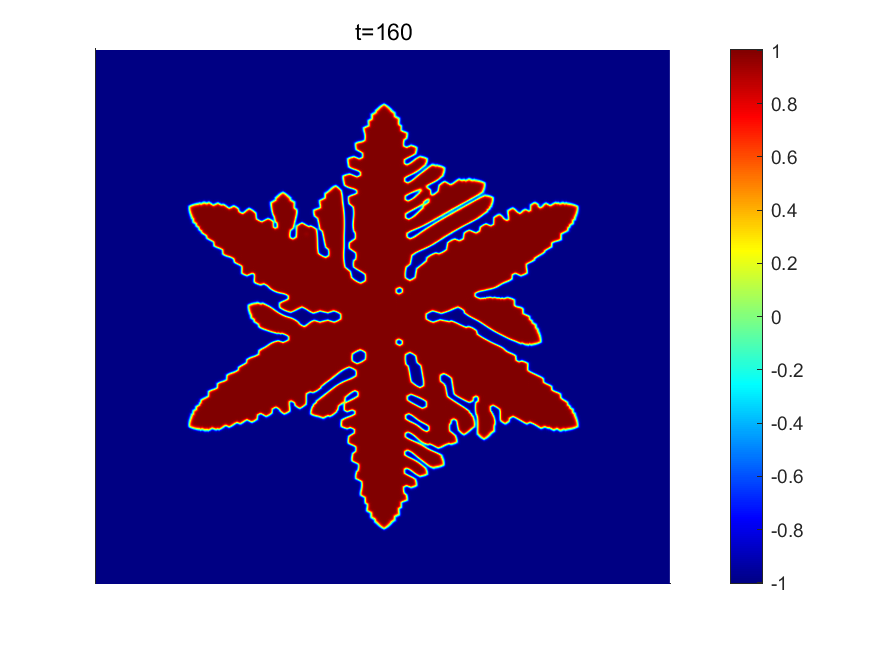}
		\end{minipage}
	}
    \subfigure[\scriptsize (b) Snapshots of the temperature field $u$.]{
		\begin{minipage}[t]{0.185\linewidth}
			\centering
			\includegraphics[width=1\linewidth]{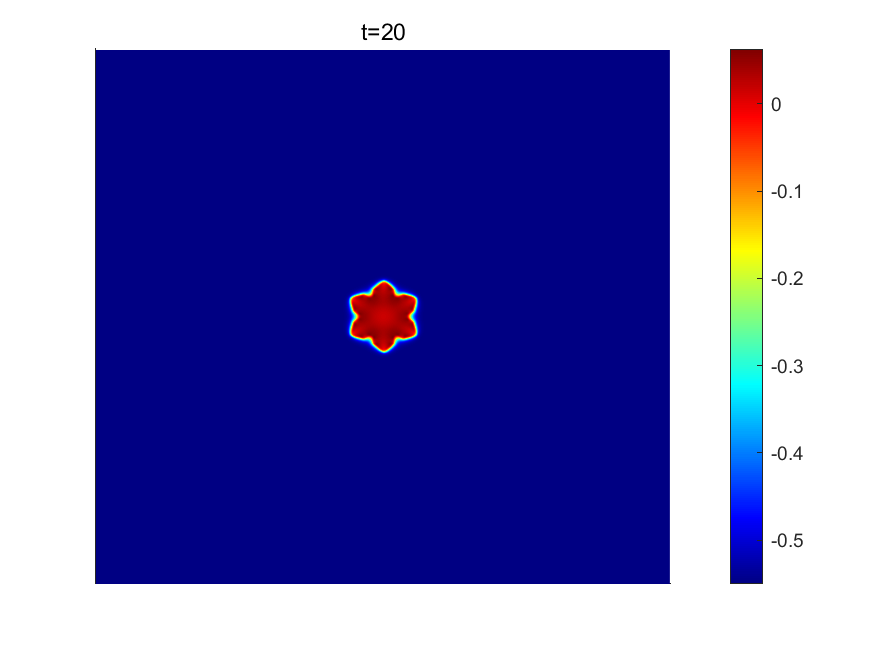}
		\end{minipage}
		\begin{minipage}[t]{0.185\linewidth}
			\centering
			\includegraphics[width=1\linewidth]{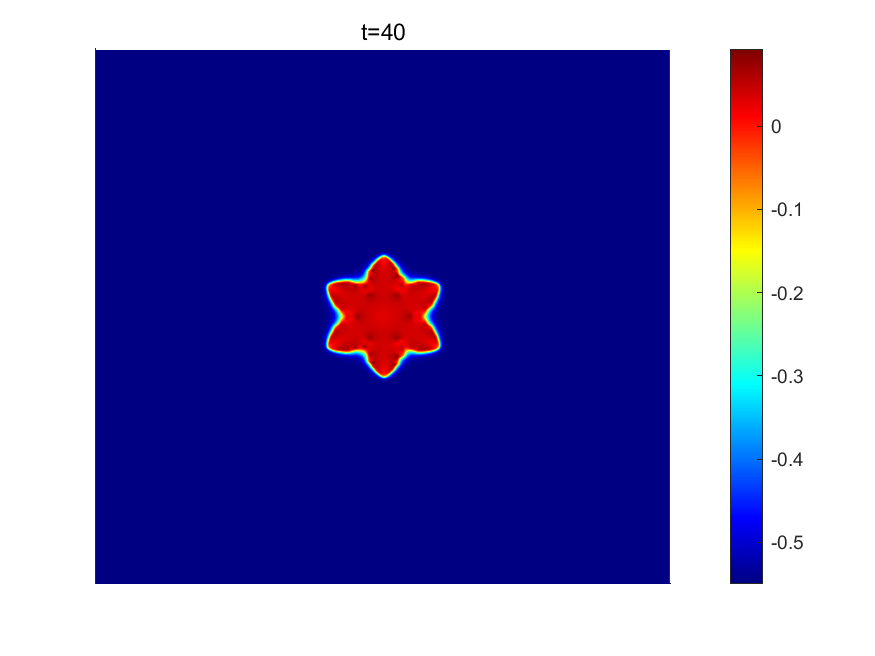}
		\end{minipage}
		\begin{minipage}[t]{0.185\linewidth}
			\centering
			\includegraphics[width=1\linewidth]{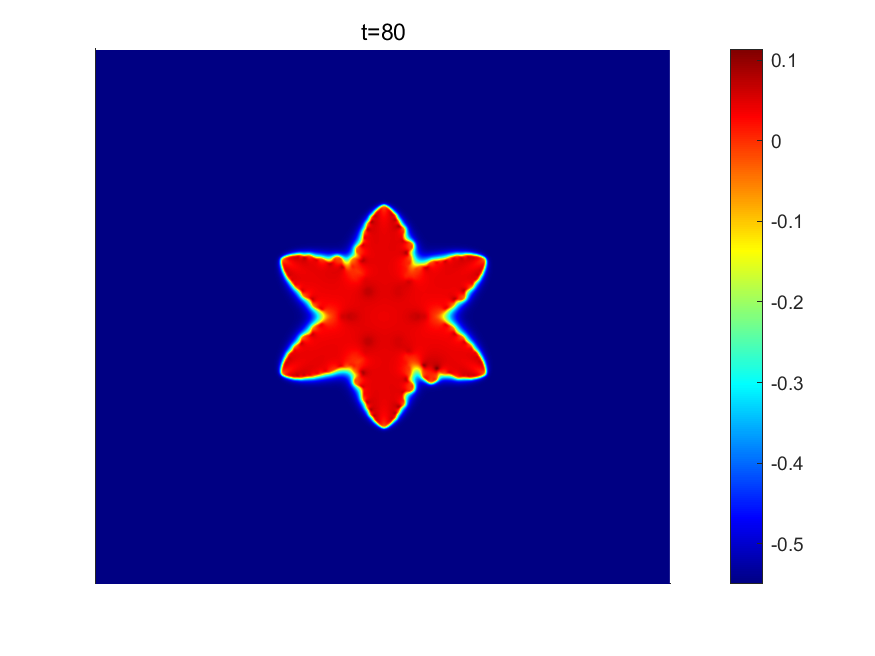}
		\end{minipage}
		\begin{minipage}[t]{0.185\linewidth}
			\centering
			\includegraphics[width=1\linewidth]{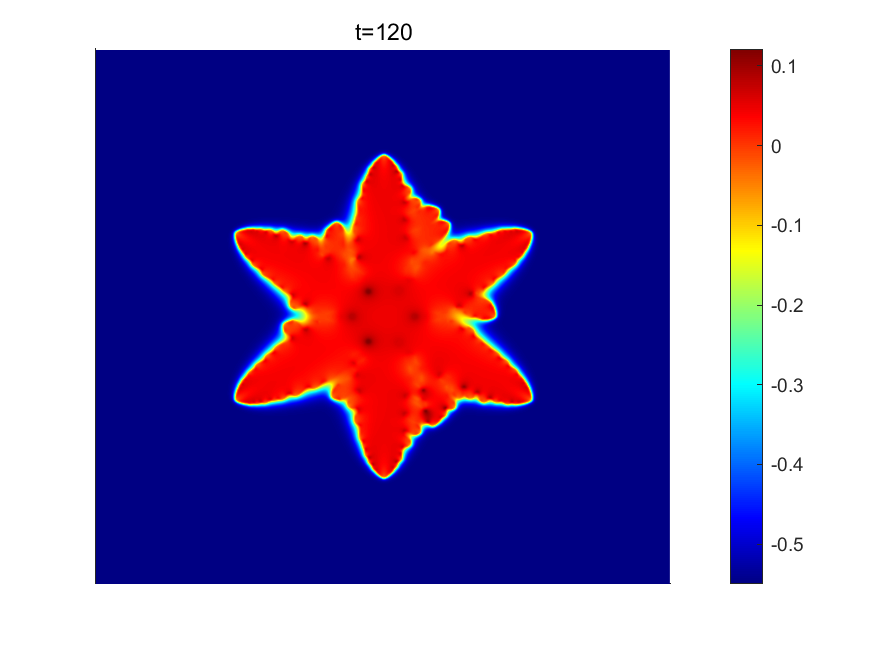}
		\end{minipage}
		\begin{minipage}[t]{0.185\linewidth}
			\centering
			\includegraphics[width=1\linewidth]{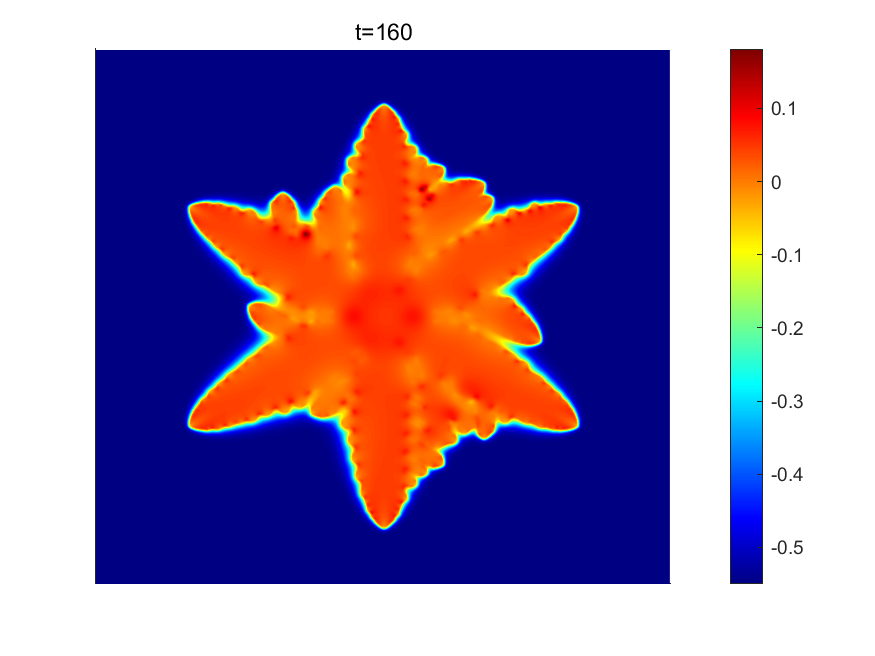}
		\end{minipage}
	}
	\caption{Sixfold dendritic crystal growth for $K=0.65$.}
	\label{fig:sixfoldK065}
\end{figure}

\begin{figure}[htbp]
	\vspace{-0.35cm}
	\subfigtopskip=2pt
	\subfigbottomskip=2pt
	\subfigcapskip=-5pt
	\centering	
    \subfigure[\scriptsize (a) Snapshots of the phase field $\phi$.]{
		\begin{minipage}[t]{0.185\linewidth}
			\centering
			\includegraphics[width=1\linewidth]{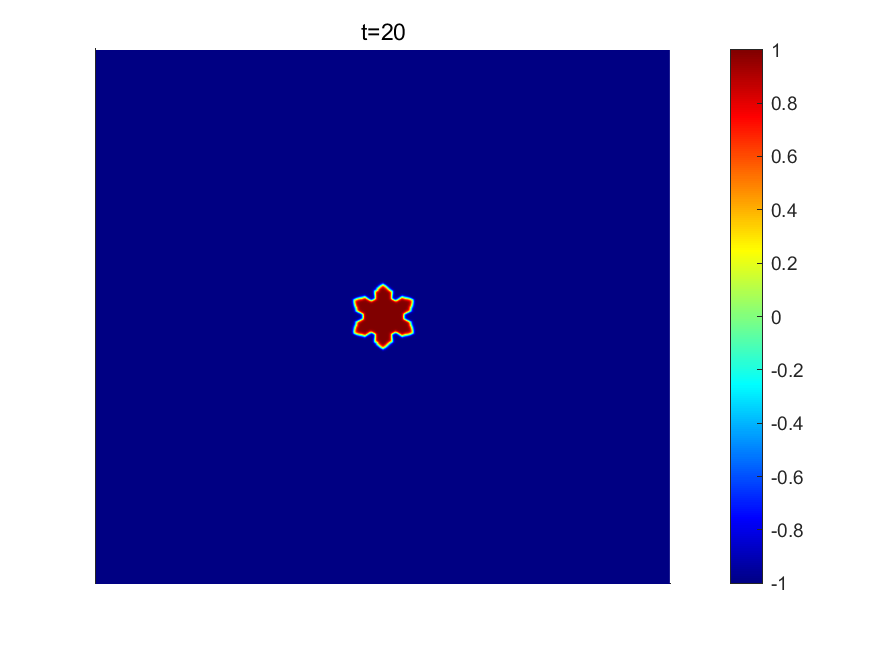}
		\end{minipage}
		\begin{minipage}[t]{0.185\linewidth}
			\centering
			\includegraphics[width=1\linewidth]{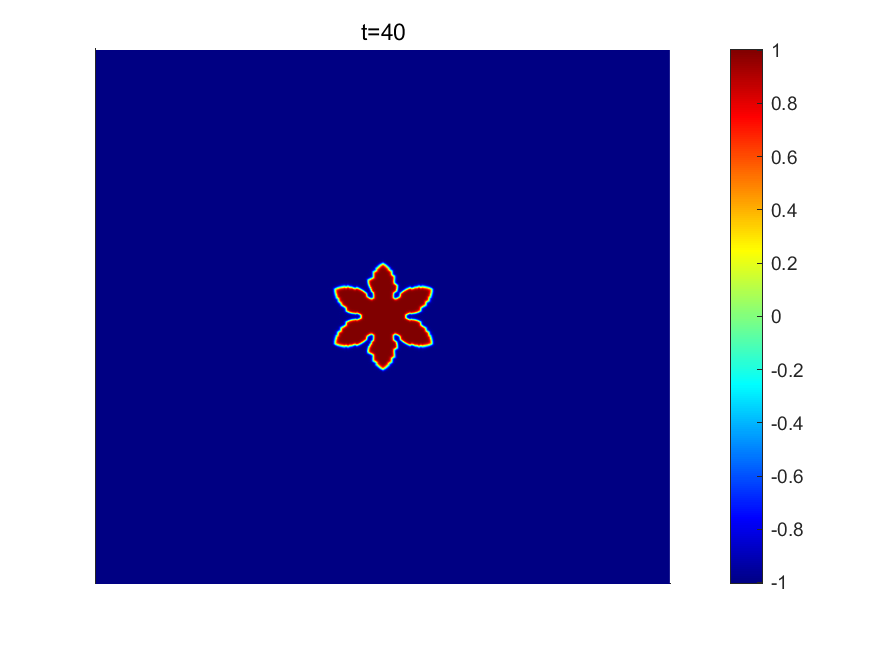}
		\end{minipage}
		\begin{minipage}[t]{0.185\linewidth}
			\centering
			\includegraphics[width=1\linewidth]{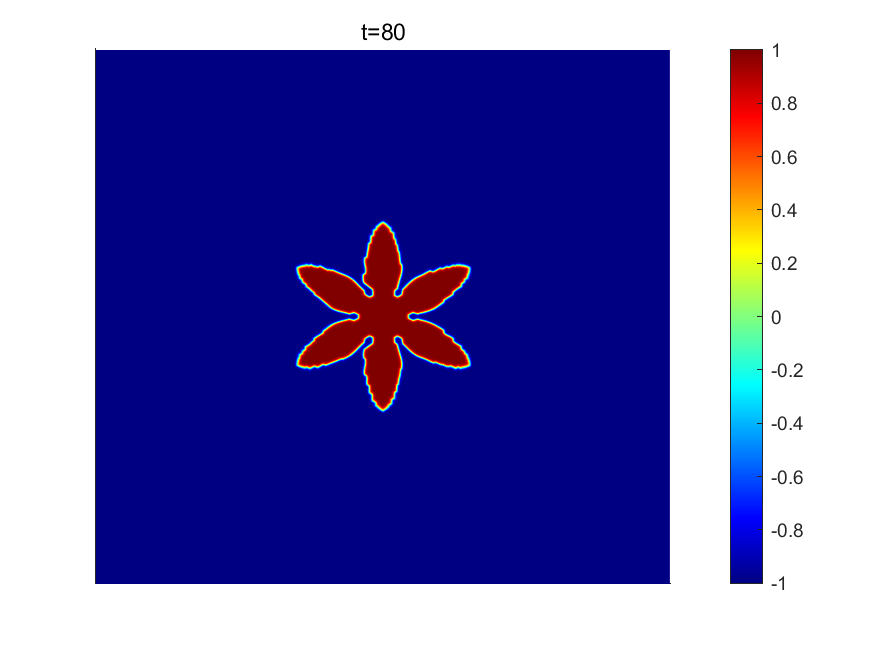}
		\end{minipage}
		\begin{minipage}[t]{0.185\linewidth}
			\centering
			\includegraphics[width=1\linewidth]{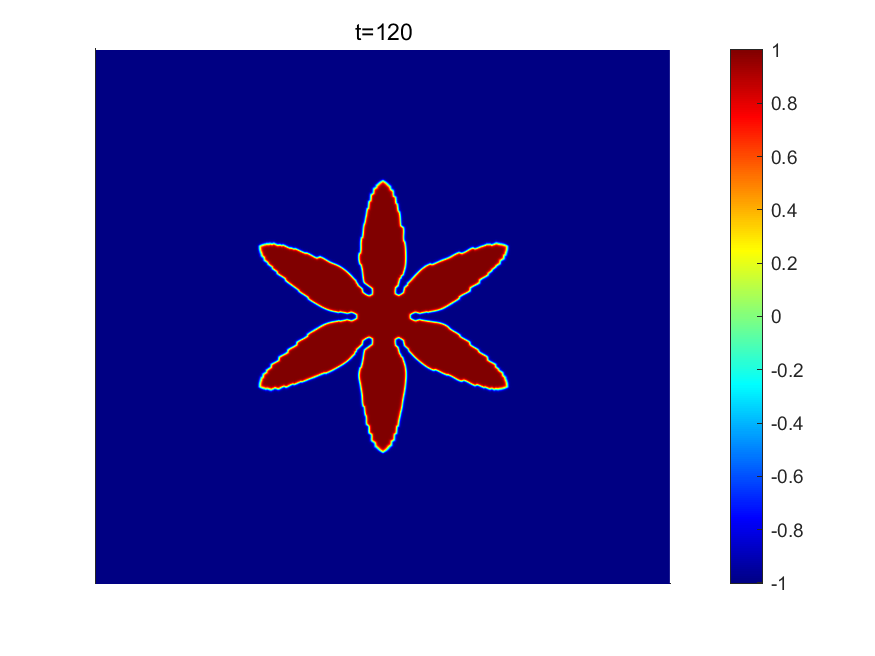}
		\end{minipage}
		\begin{minipage}[t]{0.185\linewidth}
			\centering
			\includegraphics[width=1\linewidth]{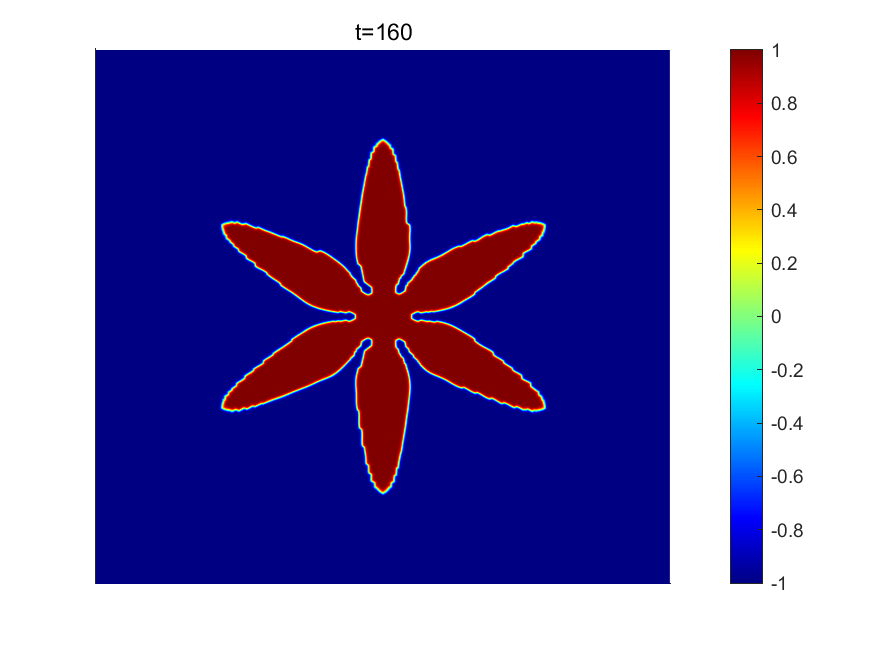}
		\end{minipage}
	}
    \subfigure[\scriptsize (b) Snapshots of the temperature field $u$.]{
		\begin{minipage}[t]{0.185\linewidth}
			\centering
			\includegraphics[width=1\linewidth]{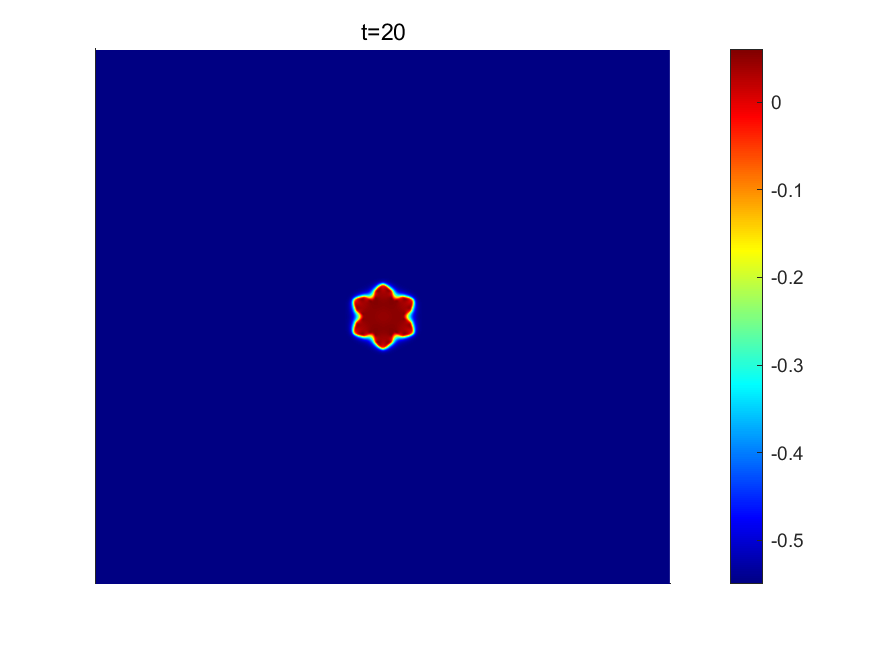}
		\end{minipage}
		\begin{minipage}[t]{0.185\linewidth}
			\centering
			\includegraphics[width=1\linewidth]{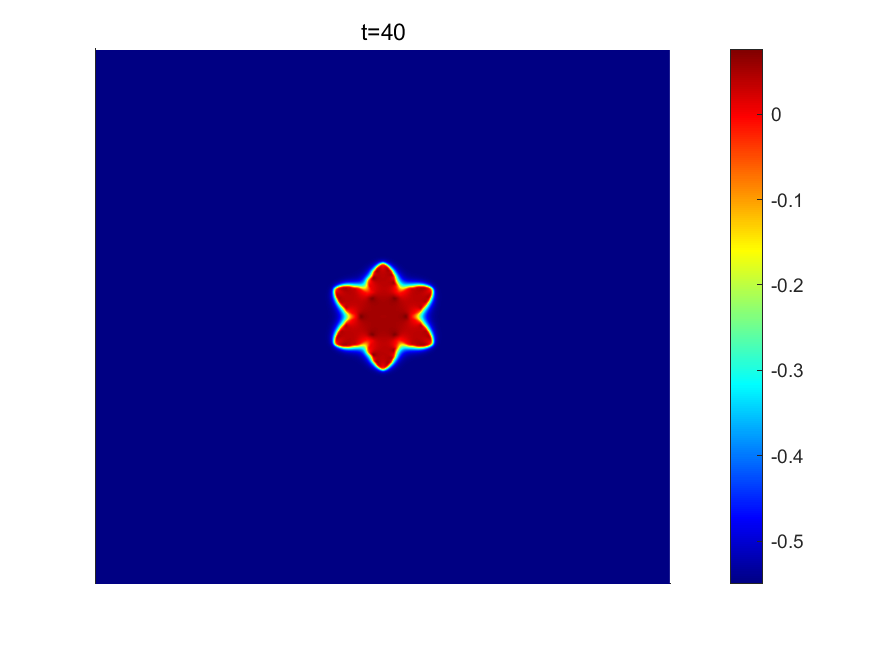}
		\end{minipage}
		\begin{minipage}[t]{0.185\linewidth}
			\centering
			\includegraphics[width=1\linewidth]{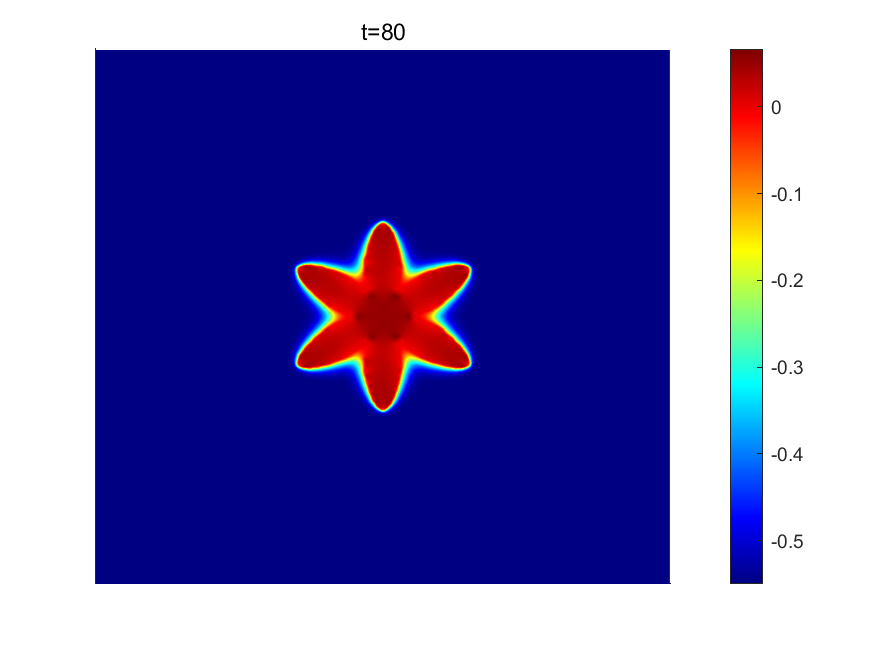}
		\end{minipage}
		\begin{minipage}[t]{0.185\linewidth}
			\centering
			\includegraphics[width=1\linewidth]{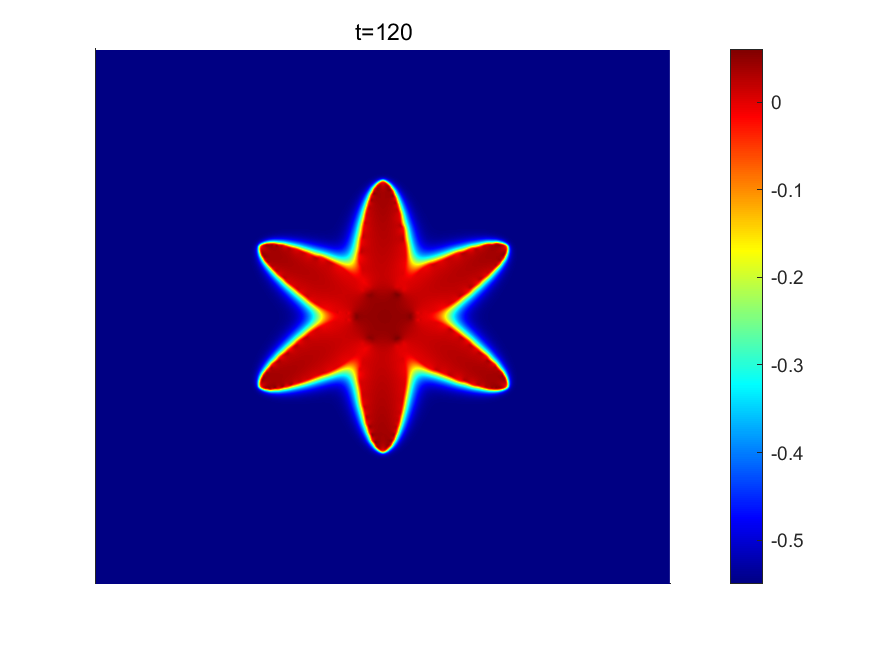}
		\end{minipage}
		\begin{minipage}[t]{0.185\linewidth}
			\centering
			\includegraphics[width=1\linewidth]{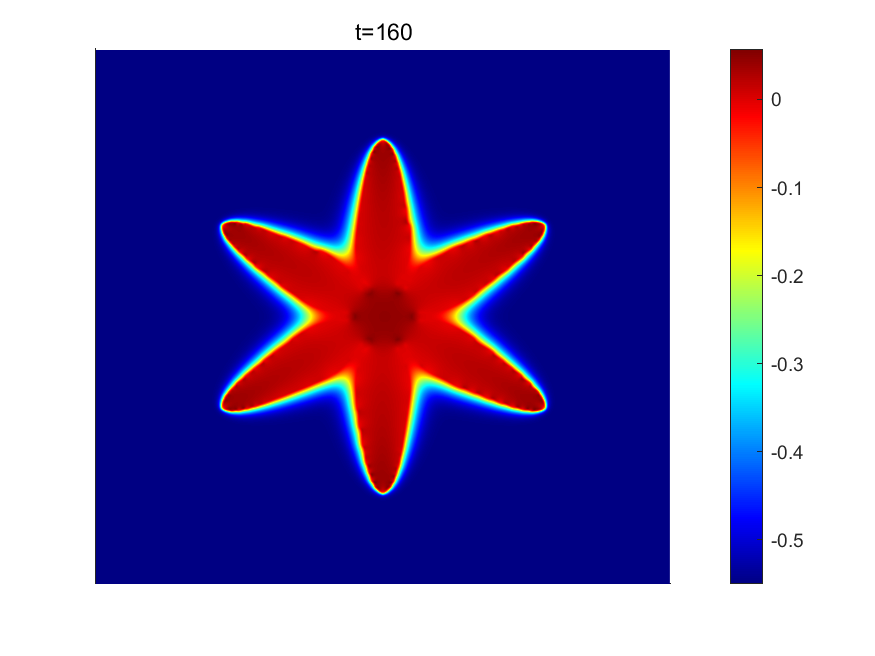}
		\end{minipage}
	}
	\caption{Sixfold dendritic crystal growth for $K=0.7$.}
	\label{fig:sixfoldK07}
\end{figure}

\begin{figure}[h]
\subfigure[\scriptsize (a) Energy evolution]{
\begin{minipage}{0.45 \textwidth}
\centering
\includegraphics[width=\textwidth]{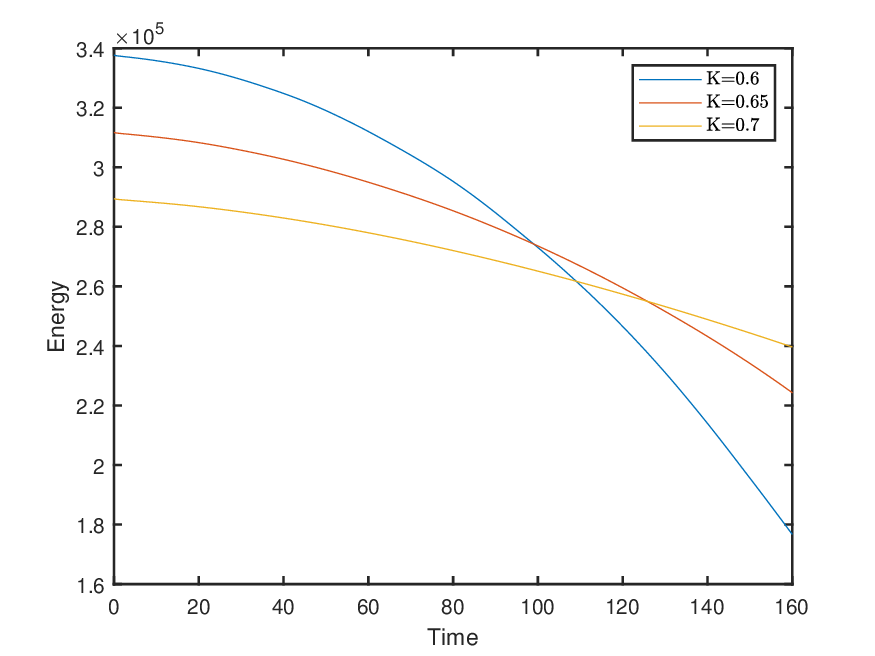}
\end{minipage}
}
\subfigure[\scriptsize (b) Crystal area]{
\begin{minipage}{0.45 \textwidth}
\centering
\includegraphics[width=\textwidth]{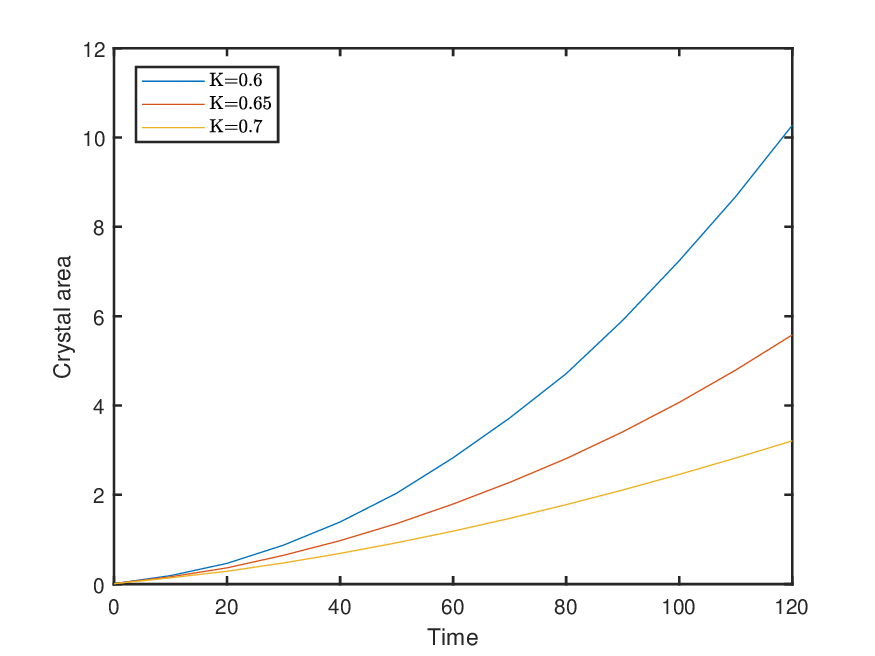}
\end{minipage}
}

\caption{ (a) Time evolution of the energy; (b) crystal
area $\int_{\Omega}\frac{1+\phi}{2}dx$.
}
\label{sixfold_energy}
\end{figure}

\begin{figure}[htbp]
	\vspace{-0.35cm}
	\subfigtopskip=2pt
	\subfigbottomskip=2pt
	\subfigcapskip=-5pt
	\centering	
    \subfigure[\scriptsize (a) Snapshots of the phase field $\phi$.]{
		\begin{minipage}[t]{0.185\linewidth}
			\centering
			\includegraphics[width=1\linewidth]{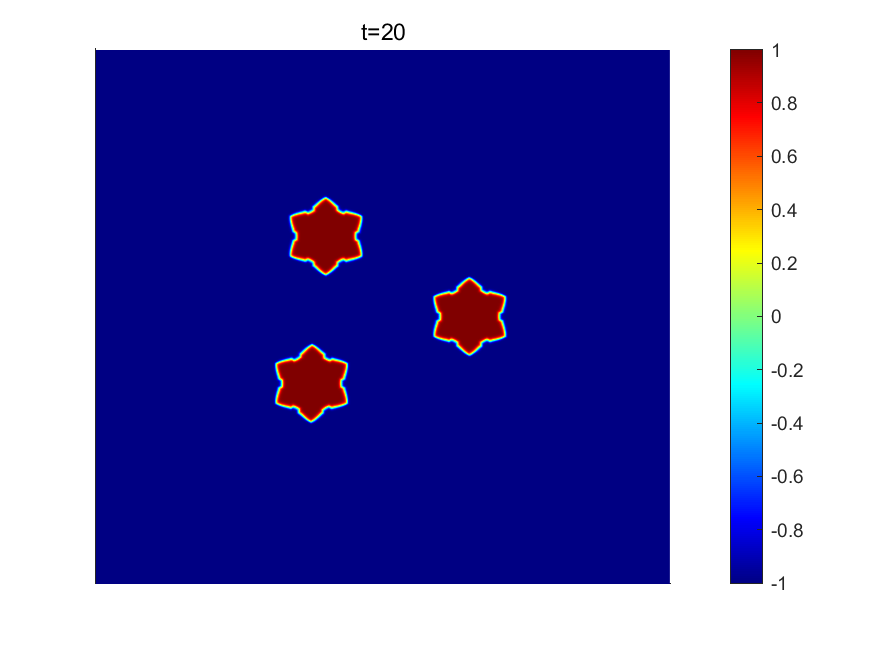}
		\end{minipage}
		\begin{minipage}[t]{0.185\linewidth}
			\centering
			\includegraphics[width=1\linewidth]{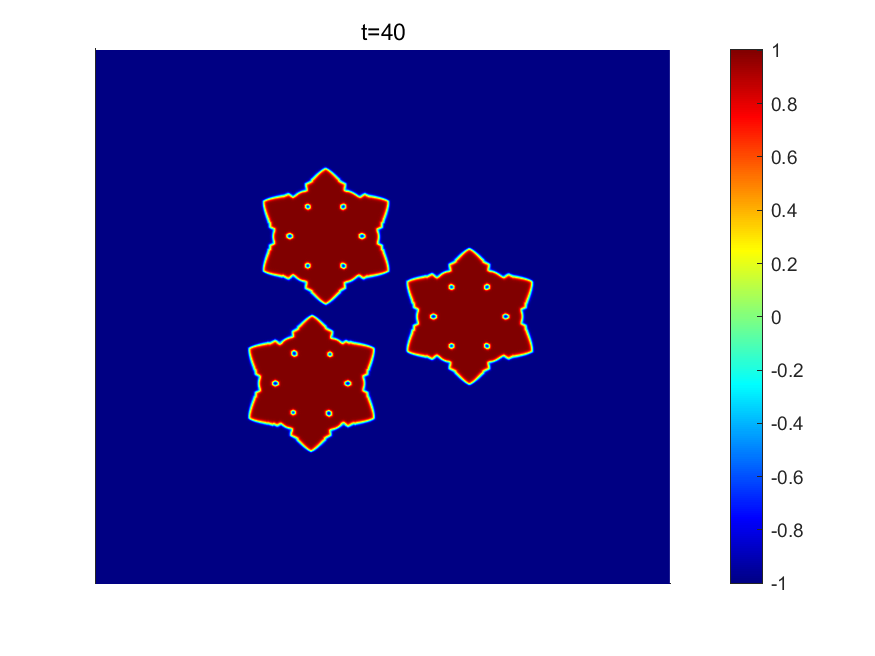}
		\end{minipage}
		\begin{minipage}[t]{0.185\linewidth}
			\centering
			\includegraphics[width=1\linewidth]{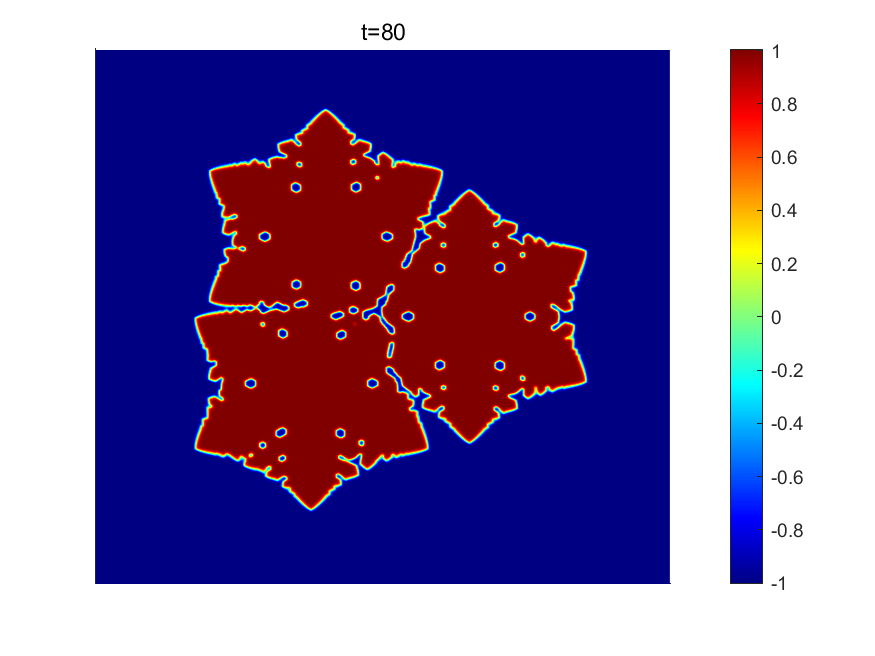}
		\end{minipage}
		\begin{minipage}[t]{0.185\linewidth}
			\centering
			\includegraphics[width=1\linewidth]{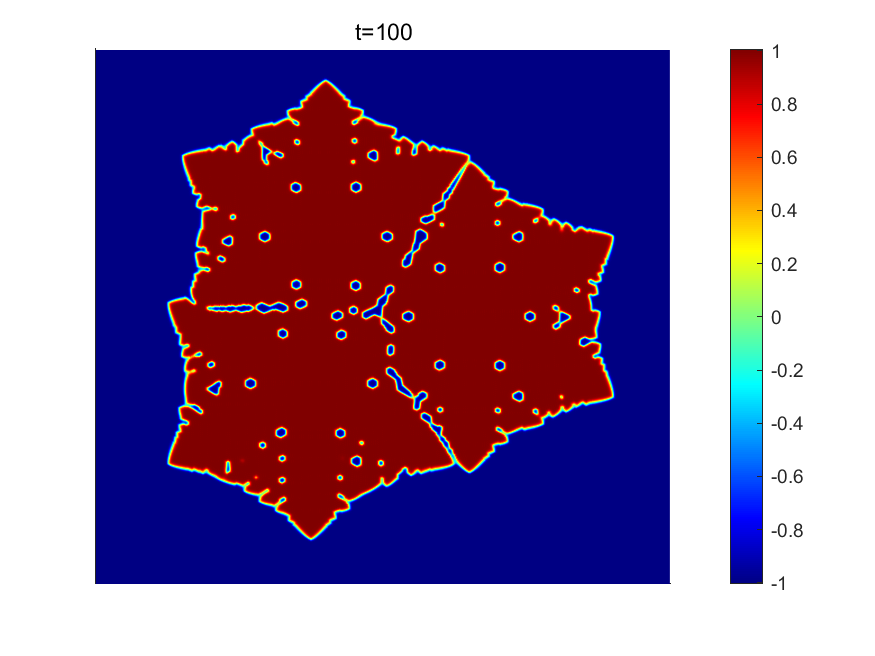}
		\end{minipage}
		\begin{minipage}[t]{0.185\linewidth}
			\centering
			\includegraphics[width=1\linewidth]{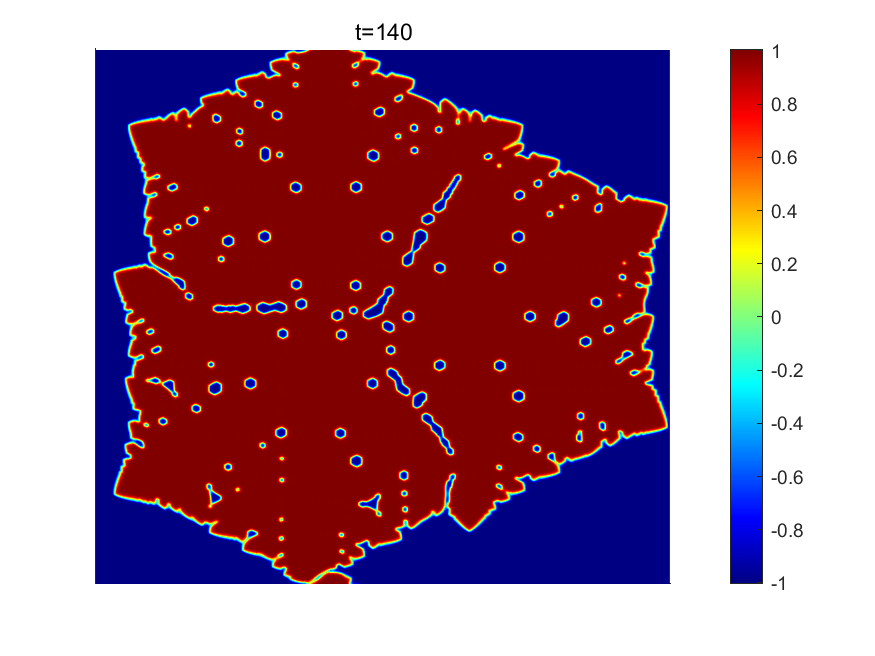}
		\end{minipage}
	}
    \subfigure[\scriptsize (b) Snapshots of the temperature field $u$.]{
		\begin{minipage}[t]{0.185\linewidth}
			\centering
			\includegraphics[width=1\linewidth]{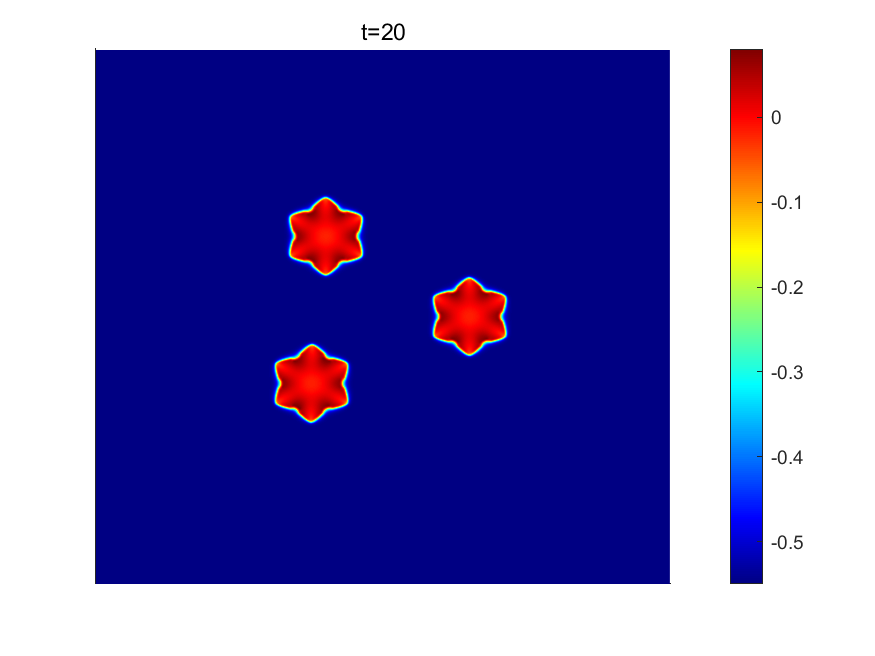}
		\end{minipage}
		\begin{minipage}[t]{0.185\linewidth}
			\centering
			\includegraphics[width=1\linewidth]{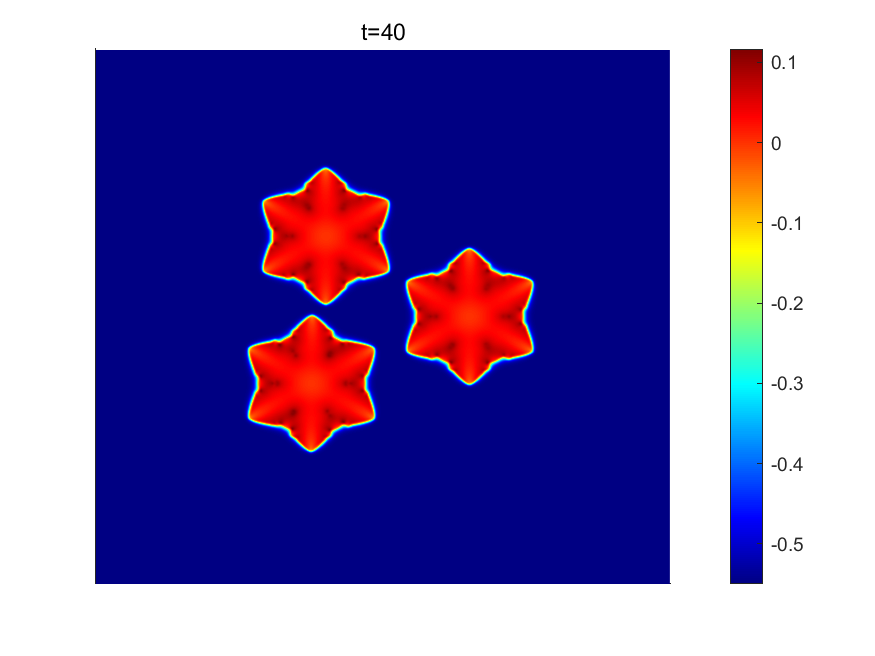}
		\end{minipage}
		\begin{minipage}[t]{0.185\linewidth}
			\centering
			\includegraphics[width=1\linewidth]{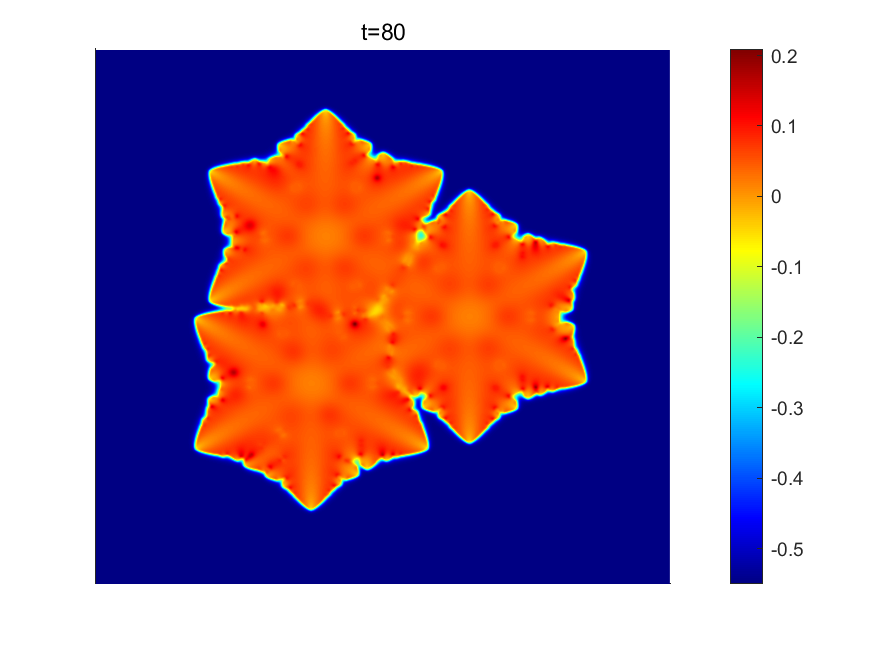}
		\end{minipage}
		\begin{minipage}[t]{0.185\linewidth}
			\centering
			\includegraphics[width=1\linewidth]{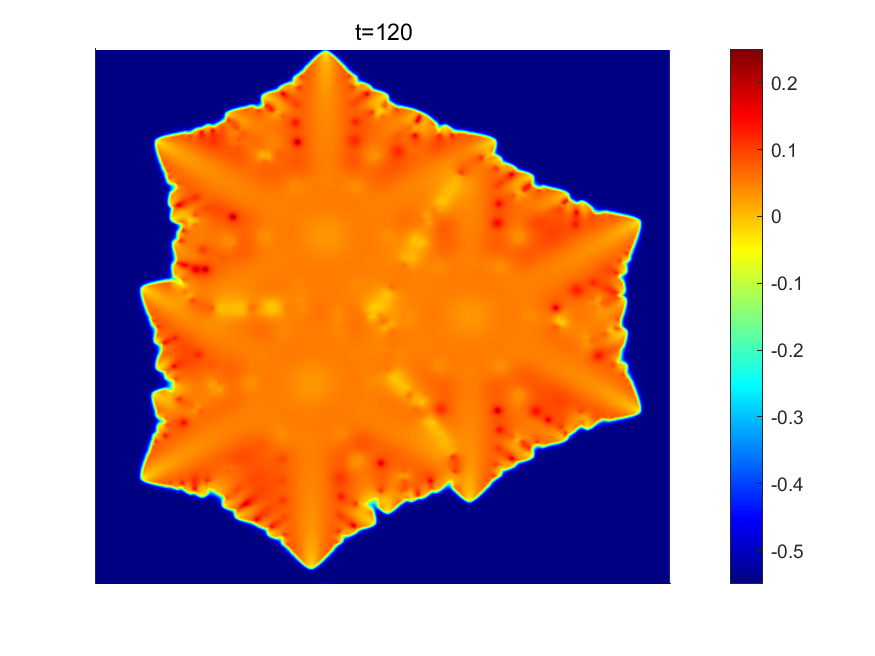}
		\end{minipage}
		\begin{minipage}[t]{0.185\linewidth}
			\centering
			\includegraphics[width=1\linewidth]{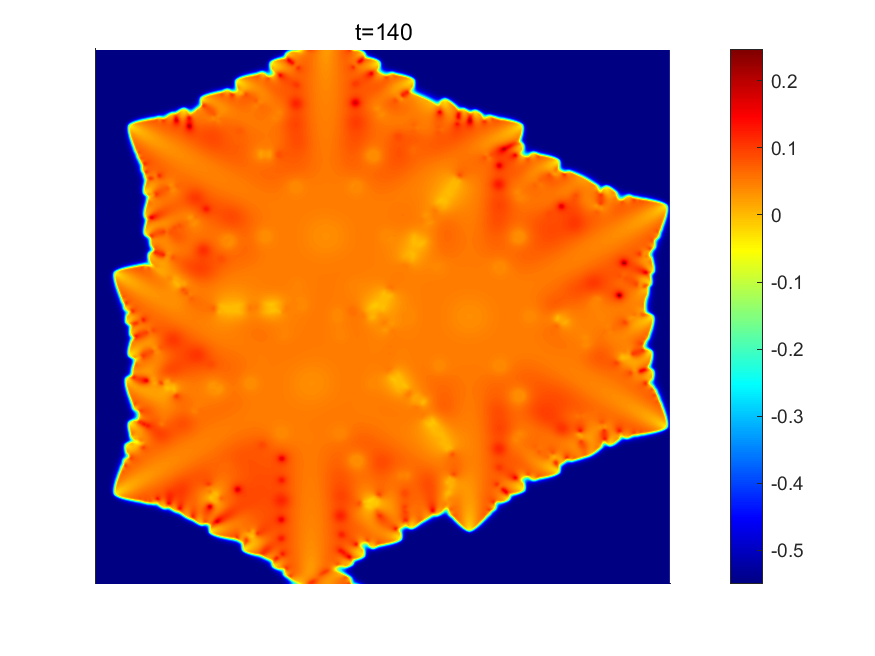}
		\end{minipage}
	}
	\caption{Sixfold anisotropy dendritic crystal growth staring with three deposited nuclei for $K=0.6$.}
	\label{fig:three-sixfoldK06}
\end{figure}

\subsection{3D dendrite crystal growth}

Finally we simulate fourfold dendrite crystal growth in 3D and investigate
the effect of the latent heat parameter $K$ on the crystal shape.
The time step size $\delta t=0.1$ is used in the simulation.
The Fourier spectral method for the spatial discretization uses 128 modes.

 \begin{example}\label{exp.5}
\rm{(Fourfold anisotropy crystal growth in 3D) Consider the following initial conditions}
\begin{equation*}
\left\{
             \begin{array}{ll}
\phi(x,y,z,0)=\tanh\Big(\frac{0.2-\sqrt{(x-\pi)^2+(y-\pi)^2+(z-\pi)^2}}{0.072}\Big),\\
u(x,y,z,0)=\left\{
             \begin{array}{ll}
             0,  &\phi>0,\\
             -0.55, &otherwise.
                          \end{array}
           \right.
             \end{array}
           \right.
\end{equation*}
Set
$$
\begin{aligned}
\tau=2.5e4, \varepsilon=3e-2, \lambda=260, D=2e-4, \sigma=0.05,\beta=4.
\end{aligned}
$$
\end{example}
In Figure \ref{fourfoldK13D} we present the isosurfaces of
$\{\phi = 0\}$ at some different time instants for $K=1$.
A diamond-like crystal structure is formed and grows from a tiny nuclei. A closer look
at the figures show that the crystal structure has four branches at each of three directions. The case $K=1.5$ is given in Figure \ref{fourfoldK153D}, showing
sharper tips and the thinner branches, comparing with the case $K=1$.
 Notice that the same 3D simulations have been performed in \cite{karma1998quantitative,li2012phase,yang2019efficient,yang2021novel,wang2022accurate}, and similar growth behavior has been observed therein.

\begin{figure}[h]
\begin{minipage}{0.3 \textwidth}
\centering
\includegraphics[width=\textwidth]{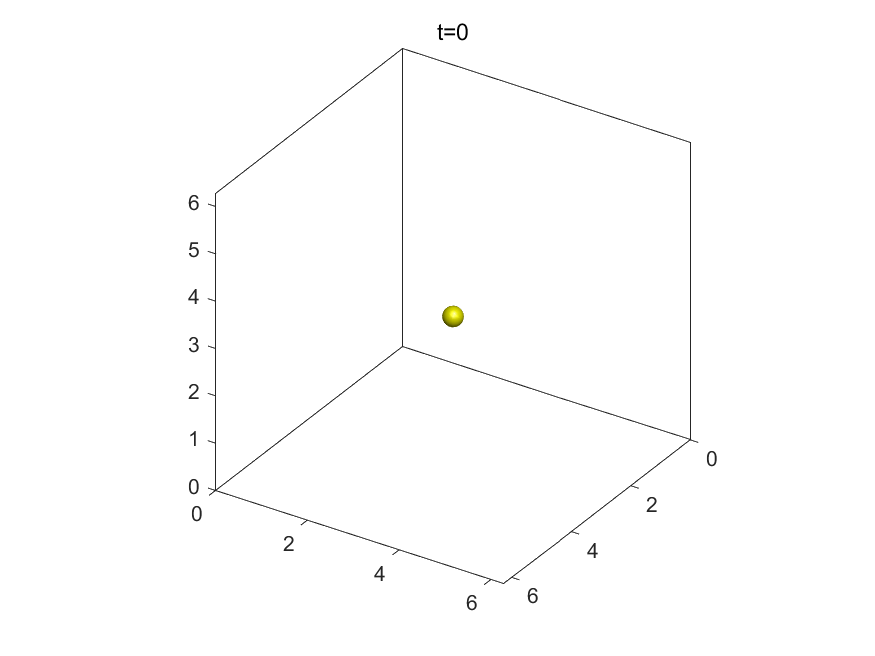}
\end{minipage}
\begin{minipage}{0.3 \textwidth}
\centering
\includegraphics[width=\textwidth]{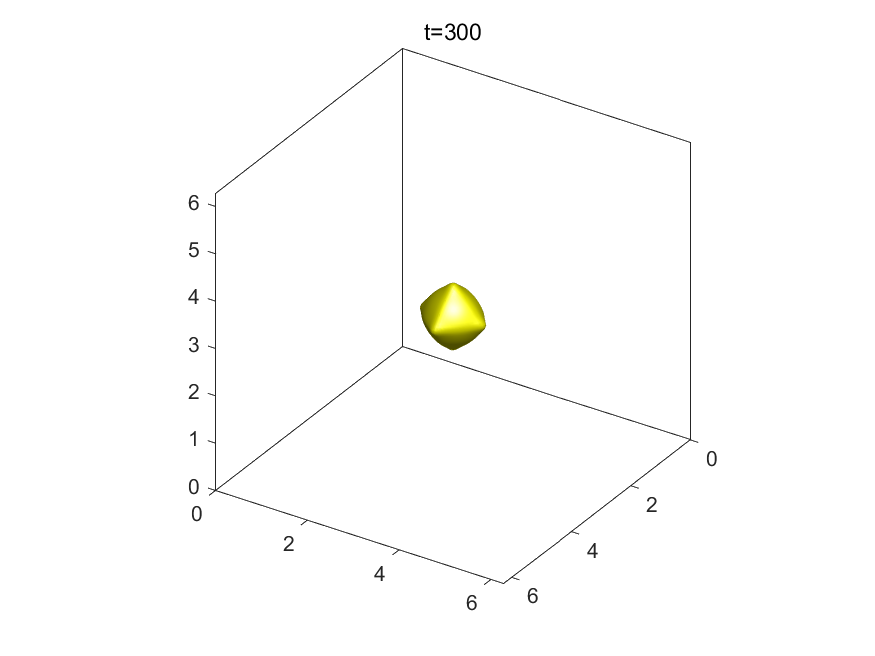}
\end{minipage}
\begin{minipage}{0.3 \textwidth}
\centering
\includegraphics[width=\textwidth]{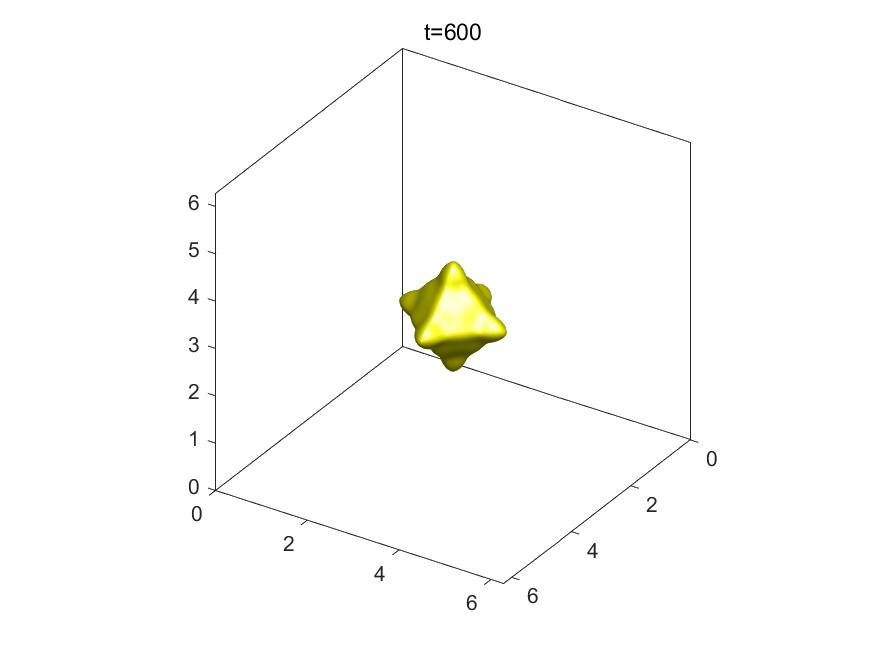}
\end{minipage}

\begin{minipage}{0.3 \textwidth}
\centering
\includegraphics[width=\textwidth]{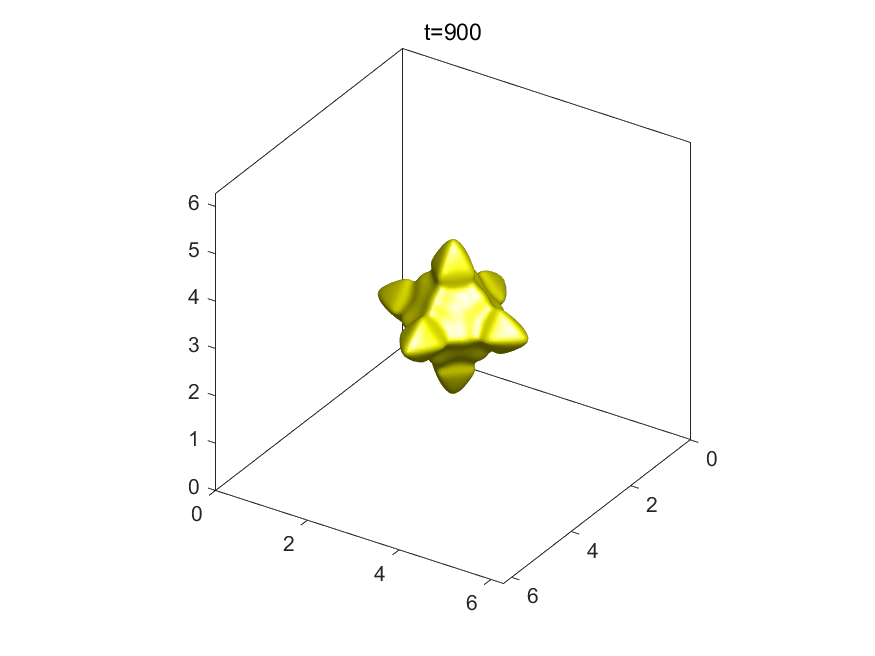}
\end{minipage}
\begin{minipage}{0.3 \textwidth}
\centering
\includegraphics[width=\textwidth]{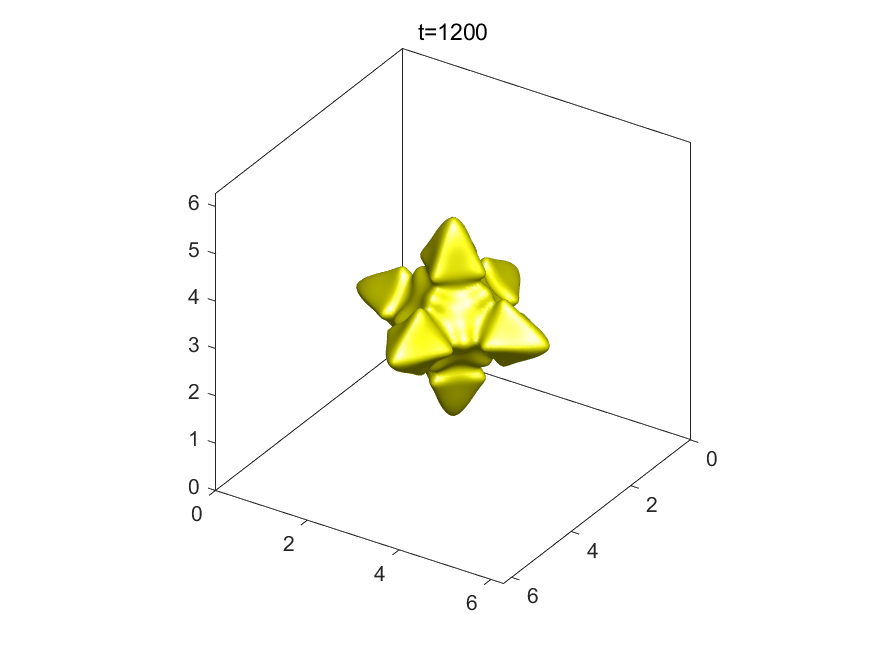}
\end{minipage}
\begin{minipage}{0.3 \textwidth}
\centering
\includegraphics[width=\textwidth]{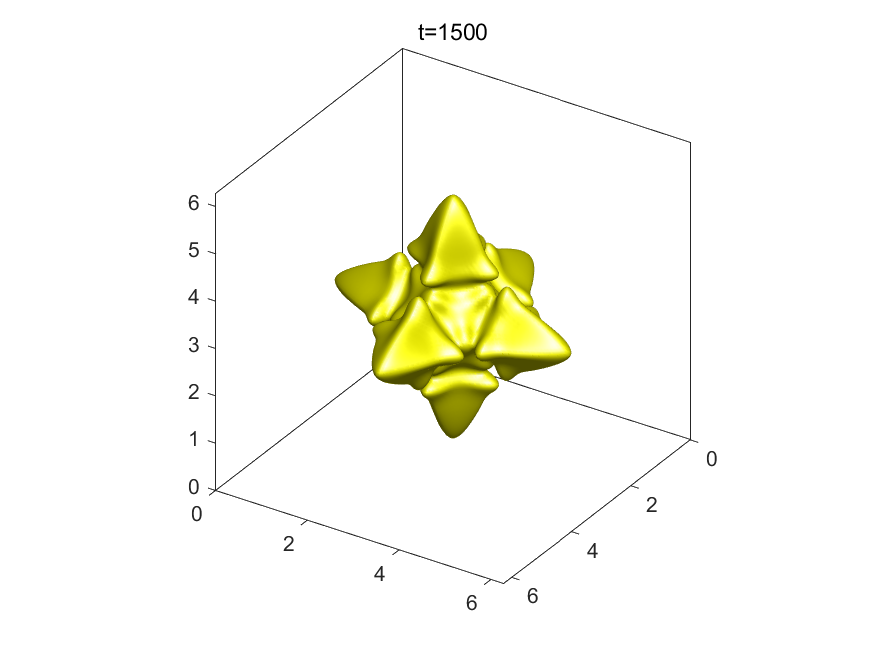}
\end{minipage}
	\caption{(Example \ref{exp.5}) Fourfold anisotropy dendritic crystal growth for $K=1$.}
\label{fourfoldK13D}
\end{figure}

\begin{figure}[h]
\begin{minipage}{0.3 \textwidth}
\centering
\includegraphics[width=\textwidth]{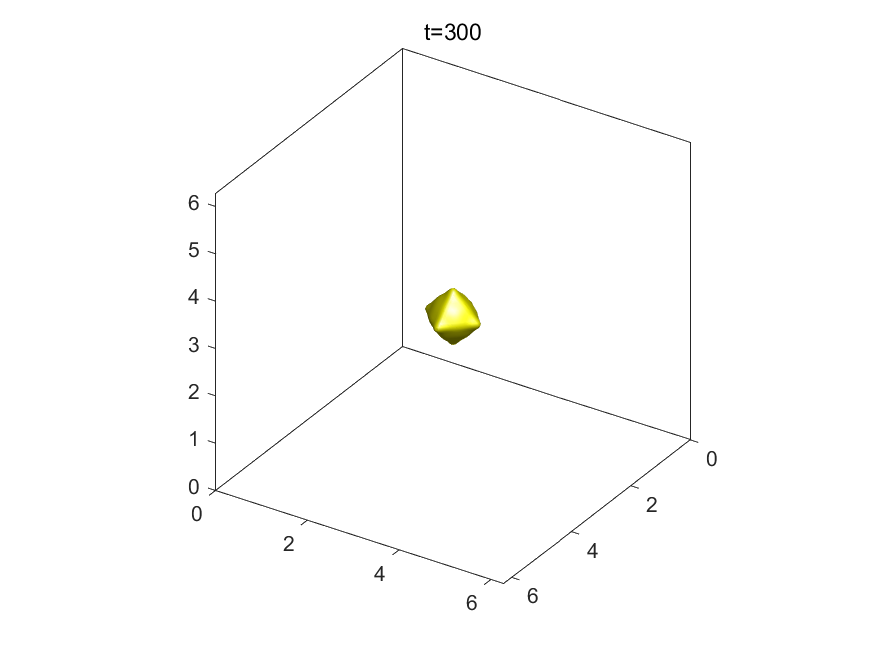}
\end{minipage}
\begin{minipage}{0.3 \textwidth}
\centering
\includegraphics[width=\textwidth]{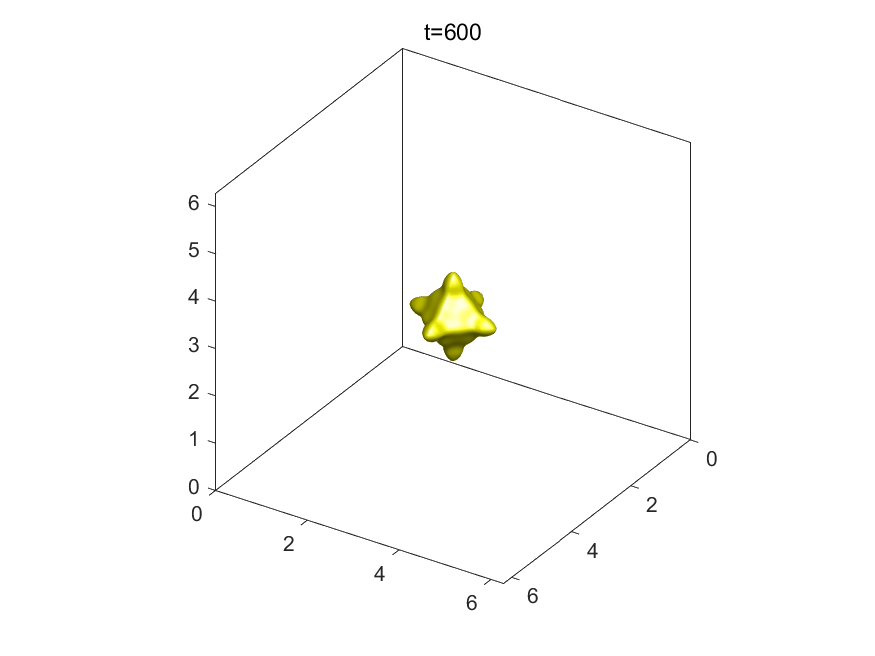}
\end{minipage}
\begin{minipage}{0.3 \textwidth}
\centering
\includegraphics[width=\textwidth]{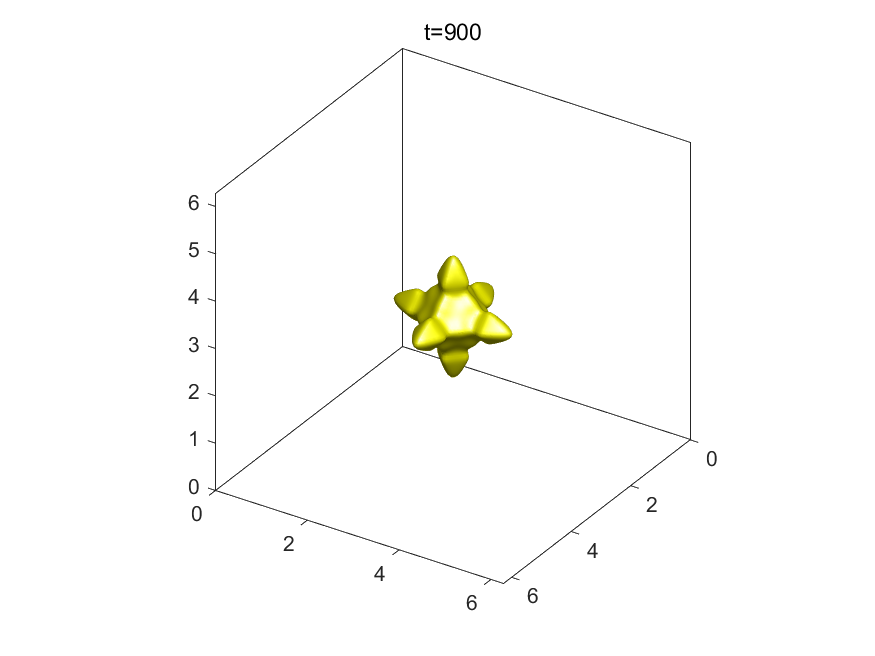}
\end{minipage}

\begin{minipage}{0.3 \textwidth}
\centering
\includegraphics[width=\textwidth]{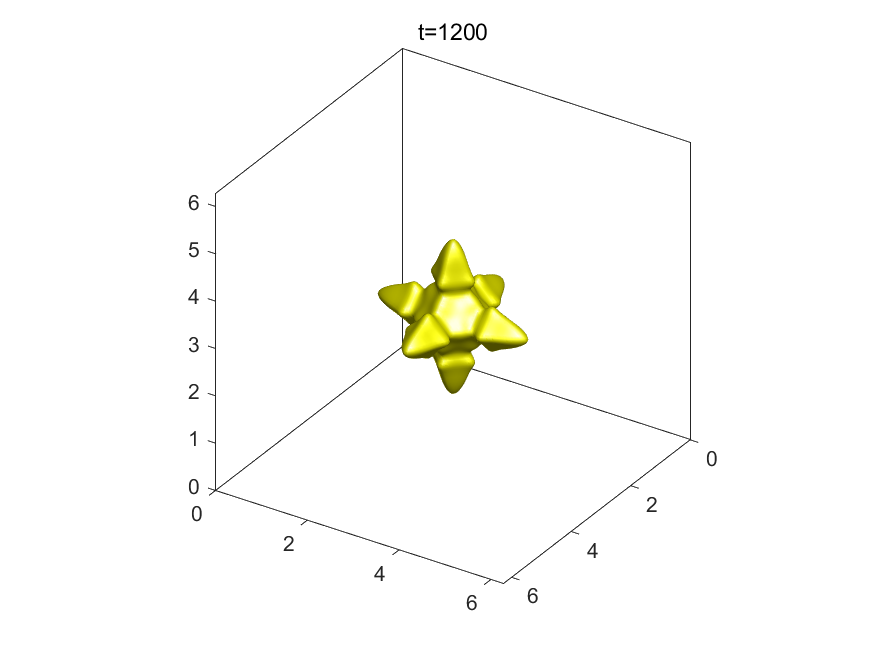}
\end{minipage}
\begin{minipage}{0.3 \textwidth}
\centering
\includegraphics[width=\textwidth]{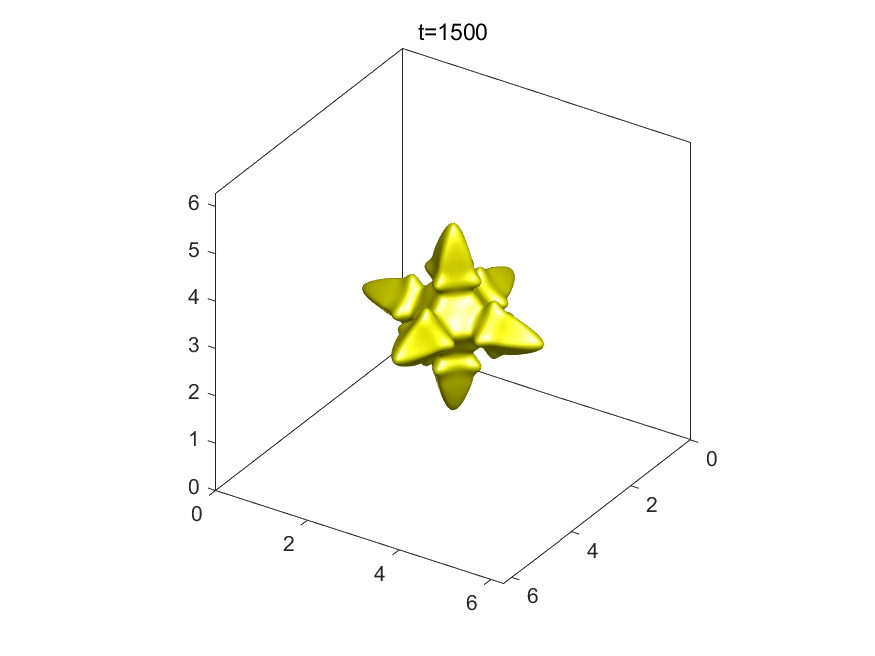}
\end{minipage}
\begin{minipage}{0.3 \textwidth}
\centering
\includegraphics[width=\textwidth]{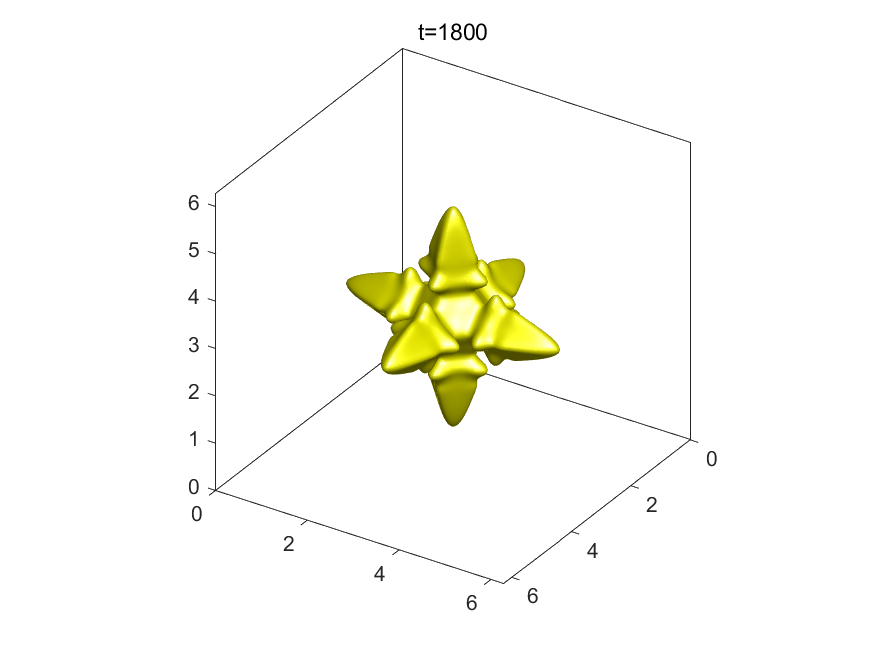}
\end{minipage}
	\caption{(Example \ref{exp.5}) Fourfold anisotropy dendritic crystal growth for $K=1.5$.}
\label{fourfoldK153D}
\end{figure}

\section{Conclusion}
We have designed a class of new time stepping schemes for the anisotropic dendritic crystal growth phase-field model, which is basically the coupling of
a phase-field equation and heat equation.
The underlying model involves
strong nonlinearity, anisotropic coefficient, and coupling
terms of different variables.
This makes construction of efficient numerical methods challenging.
The new proposed schemes are two-step method: in the first step, an intermediate solution is computed by using stabilized
BDF schemes of order up to 3 for both the
phase-field and heat equations.
In this stabilized BDF schemes, all nonlinear terms and coupling terms are
treated so that the intermediate solution can be obtained by solving two linear
elliptic equations.
The key for the success lies in the
stabilization step, which consists in correcting the intermediate phase-field and temperature solutions by using an auxiliary variable. This correction step
played a key role in stabilizing the schemes while keeping the expected
convergence orders.
In particular, in the correction step
a so-called generalized auxiliary variable with relaxation was introduced.
Compared to the existing techniques, our new correction algorithm needs
one less parameter, therefore easier to implement.
The stability property of the proposed schemes was established, while the convergence rate was examined through a series of numerical tests.
The computed numerical results demonstrated the efficiency and reliability of the proposed method.

Compared with the existing schemes for the dendritic crystal growth model,
the method proposed in the current paper are of higher order accuracy, fully decoupled, and
unconditionally energy stable, and cheaper in computation cost.
To be more specific,
compared with the most recent method in \cite{li2022new} which requires solving
four linear elliptic equations, our method only needs to solve two linear equations at each time step.

\bibliographystyle{plain}
\bibliography{v11}

\end{document}